\icmltitlerunning{Non-Convex Exact Recovery via Projected Power Method}
\newcommand{\be}{\begin{equation}}
\newcommand{\ee}{\end{equation}}
\newcommand{\st}{\mbox{s.t.}}
\newcommand{\argmin}{\mathop{\mathrm{arg\,min}}}
\newcommand{\argmax}{\mathop{\mathrm{arg\,max}}}
\newcommand{\ve}{\mathrm{vec}}
\newtheorem{lemma}{Lemma}
\newtheorem{thm}{Theorem}
\newtheorem{coro}{Corollary}
\newtheorem{defi}{Definition}
\newtheorem{prop}{Proposition}
\def\bA{\bm{A}}
\def\bB{\bm{B}}
\def\bC{\bm{C}}
\def\bCp{\bm{C}^\prime}
\def\bE{\bm{E}}
\def\bG{\bm{G}}
\def\bH{\bm{H}}
\def\bHp{\bm{H}^\prime}
\def\bHs{\bm{H}^*}
\def\bI{\bm{I}}
\def\bLam{\bm{\Lambda}}
\def\bQ{\bm{Q}}
\def\bU{\bm{U}}
\def\bV{\bm{V}}
\def\bVp{\bm{V}^\prime}
\def\bZ{\bm{Z}}
\def\b0{\bm{0}}
\def\ba{\bm{a}}
\def\bc{\bm{c}}
\def\be{\bm{e}}
\def\bh{\bm{h}}
\def\bo{\bm{1}}
\def\bpi{\bm{\pi}}
\def\bu{\bm{u}}
\def\bw{\bm{w}}
\def\bx{\bm{x}}
\def\mH{\mathcal{H}}
\def\mI{\mathcal{I}}
\def\mJ{\mathcal{J}}
\def\mO{\mathcal{O}}
\def\mP{\mathcal{P}}
\def\mS{\mathcal{S}}
\def\mT{\mathcal{T}}
\def\bb{\bm{b}}
\def\E{\mathbb{E}}
\def\H{\mathbb{H}}
\def\M{\mathbb{M}}
\def\P{\mathbb{P}}
\def\R{\mathbb{R}}
\begin{document}

\twocolumn[
\icmltitle{Optimal Non-Convex Exact Recovery in Stochastic Block Model \\ via Projected Power Method}


\begin{icmlauthorlist}
	\icmlauthor{Peng Wang}{to}
	\icmlauthor{Huikang Liu}{goo}
	\icmlauthor{Zirui Zhou}{ed}
	\icmlauthor{Anthony Man-Cho So}{to}
\end{icmlauthorlist}

\icmlaffiliation{to}{Department of Systems Engineering and Engineering Management, The Chinese University of Hong Kong, Hong Kong}
\icmlaffiliation{goo}{Business School, Imperial College London, London, United Kingdom}
\icmlaffiliation{ed}{Huawei Technologies Canada Co., Ltd., Burnaby, Canada}

\icmlcorrespondingauthor{Huikang Liu}{hkliu2014@gmail.com}

\icmlkeywords{Machine Learning, ICML}

\vskip 0.2in
]



\printAffiliationsAndNotice{}  

\begin{abstract}
In this paper, we study the problem of exact community recovery in the symmetric stochastic block model, where a graph of $n$ vertices is randomly generated by partitioning the vertices into $K \ge 2$ equal-sized communities and then connecting each pair of vertices with probability that depends on their community memberships. Although the maximum-likelihood formulation of this problem is discrete and non-convex, we propose to tackle it directly using projected power iterations with an initialization that satisfies a partial recovery condition. Such an initialization can be obtained by a host of existing methods. We show that in the logarithmic degree regime of the considered problem, the proposed method can exactly recover the underlying communities at the information-theoretic limit. Moreover, with a qualified initialization, it runs in $\mO(n\log^2n/\log\log n)$ time, which is competitive with existing state-of-the-art methods. We also present numerical results of the proposed method to support and complement our theoretical development.  
\end{abstract}

\section{Introduction}\label{sec:intro}

Community detection is a fundamental task in network analysis and has found wide applications in diverse fields, such as social science \citep{girvan2002community}, physics \citep{newman2004finding}, and machine learning \citep{shi2000normalized}, just to name a few. As the study of community detection grows, a large variety of theories and algorithms have been proposed in the past decades for addressing different tasks under different settings. To better validate and compare these theories and algorithms, the stochastic block model (SBM), which tends to generate graphs containing underlying community structures, is widely used as a canonical model for studying community detection. In particular, substantial advances have been made in recent years on understanding the fundamental limits of community detection and developing algorithms for tackling different recovery tasks in the SBM; see, e.g., \citet{abbe2017community} and the references therein. 

In this work, we consider the problem of exactly recovering the communities in the symmetric SBM. Specifically, given $n$ nodes that are partitioned into $K \ge 2$ unknown communities of equal size, a random graph is generated by independently connecting each pair of vertices with probability $p$ if they belong to the same community and with probability $q$ otherwise. The goal is to recover the underlying communities exactly by only observing one realization of the graph. In the logarithmic degree regime of the considered SBM, i.e., $p=\alpha\log n/n$ and $q=\beta\log n/n$ for some $\alpha,\beta>0$, this problem exhibits a sharp information-theoretic threshold: it is possible to achieve exact recovery if $\sqrt{\alpha}-\sqrt{\beta} > \sqrt{K}$ and is impossible if $\sqrt{\alpha}-\sqrt{\beta} < \sqrt{K}$ \citep{abbe2015community}. Then, it is of interest to design computationally tractable methods that can achieve exact recovery under a condition on $\alpha$ and $\beta$ that meets the information-theoretic limit. In the past years, many algorithms have been proposed to achieve this task, such as spectral clustering \citep{mcsherry2001spectral,su2019strong,yun2014accurate, yun2016optimal}, SDP-based approach \cite{amini2018semidefinite,fei2018exponential,fei2020achieving,li2018convex}, and likelihood-based approach \cite{amini2013pseudo,gao2017achieving,zhang2016minimax,zhou2020rate}. However, most of these algorithms have a time complexity that is at least quadratic in $n$, which usually does not scale well to large-scale problems. 


In the symmetric SBM, the maximum likelihood (ML) estimation problem is formulated as
\begin{align}\label{MLE}
\max\ \left\{ \langle \bA\bH, \bH \rangle: \bH \in \mH \right\}, \tag{\textsf{MLE}}
\end{align}
where $\bA$ is the adjacency matrix of the observed graph,
\begin{align}\label{set-H}
\mH=\left\{\bH \in \R^{n\times K}:\ \bH\bo_K = \bo_n,\ \bH^T\bo_n = m\bo_K,  \right. \\ 
\left.  \bH \in \{0,1\}^{n\times K}\right\} \ \quad\qquad\qquad \notag
\end{align}
is the discrete feasible set, $\bo_n$ is the all-one vector of dimension $n$, and $m=n/K$. It is known that an ML estimator achieves exact recovery at the information-theoretic limit, but solving Problem \eqref{MLE} is NP-hard in the worst-case. Recently, in independent lines of research, many non-convex formulations that arise in a variety of applications have been shown to be solvable, in the sense of average-case performance, by simple and scalable iterative methods. This includes phase retrieval \citep{bendory2017non, chen2019gradient}, group synchronization \citep{ling2020improved,liu2017estimation,liu2020unified,zhong2018near}, low-rank matrix recovery \citep{chi2019nonconvex}, and two-block community detection \cite{wang2020nearly}. It then naturally motivates the question of whether one can apply a similar simple and scalable method to the discrete optimization problem \eqref{MLE}. In this work, we answer this question in the affirmative by showing that a projected power method provably works for solving Problem \eqref{MLE}. As a consequence, we obtain a simple and scalable method that achieves exact recovery under the optimal condition on $\alpha$ and $\beta$.


\subsection{Related Works}



In the context of the SBM, \emph{exact recovery}, also named \emph{strong consistency}, requires all the communities to be identified correctly up to a permutation of labels. More precisely, exact recovery is achieved if there exists an algorithm that takes one realization of the graph as input and outputs the true partition with high probability. In the logarithmic degree regime of the binary symmetric SBM, i.e., the symmetric SBM with $K=2$, \citet{abbe2016exact} and \citet{mossel2014consistency} independently showed that it is possible to achieve exact recovery if $\sqrt{\alpha}-\sqrt{\beta} > \sqrt{2}$ and is not possible if $\sqrt{\alpha}-\sqrt{\beta} < \sqrt{2}$, thereby establishing the information-theoretic limit for exact recovery. Later, \citet{abbe2015community} generalized this result to the case of $K \ge 2$ and showed that the information-theoretic limit is $\sqrt{\alpha}-\sqrt{\beta} > \sqrt{K}$. \emph{Almost exact recovery}, also named \emph{weak consistency}, requires the recovery of all but a vanishing fraction of vertices. In \emph{partial recovery}, only a constant fraction of vertices needs to be identified correctly. It is obvious that the requirement of partial recovery is much milder than that of almost exact recovery. We refer the reader to \citet{abbe2017community} for the formal definitions of these recovery tasks and more results on the corresponding fundamental limits in the SBM. 

Over the past years, many algorithms have been proposed to tackle the problem of exact recovery in the symmetric SBM. One popular approach is spectral clustering. For example, \citet{mcsherry2001spectral} proposed a spectral partition method, which first randomly partitions the vertex set into two parts, then calls the combinatorial projection subroutine, and finally clusters the vertices by distances on the projected points. They showed that in the symmetric SBM, the proposed method achieves exact recovery if $(p-q)/\sqrt{p}\gtrsim \sqrt{\log n/n}$ and $np \gtrsim \log^6n$. Later, \citet{yun2014accurate, yun2016optimal} also presented a spectral partition method, which proceeds by applying spectral decomposition to a trimmed adjacency matrix for generating an initial partition, followed by an additional procedure for local improvement. In the considered SBM, this method achieves exact recovery down to the information-theoretic threshold in $\mO(n \mathbf{poly}\log n)$. 
Recently, \citet{su2019strong} showed that the standard spectral clustering, which first computes the leading $K$ eigenvectors of the graph Laplacian matrix and then applies the {\em k-means} algorithm to do clustering, achieves exact recovery under some weak conditions. These conditions can be simplified as $\sqrt{\alpha}-\sqrt{\beta} \ge c > \sqrt{K}$ for some positive constant $c$ in the symmetric SBM. In general, these spectral clustering methods run in polynomial time.  Another popular approach is convex relaxation of the ML estimation problem. In the setting of $K=2$, \citet{bandeira2018random} and \citet{hajek2016achieving,hajek2016achieving_ex} respectively showed that semidefinite programming (SDP) relaxation of the ML formulation of the binary symmetric SBM achieves exact recovery at the information-theoretic limit. In the setting of $K \ge 2$, \citet{guedon2016community} proposed a SDP relaxation of Problem \eqref{MLE} and showed a recovery error bound, which decays polynomially in the signal-to-noise ratio, for the solution to their considered SDP. Such an error bound only implies that their proposed SDP achieves almost exact recovery in our considered SBM. Following this work, \citet{fei2018exponential, fei2020achieving} proposed a new SDP relaxation of Problem \eqref{MLE} and established a more refined recovery error bound, which decays exponentially in the signal-to-noise ratio. This error bound implies that their proposed SDP achieves exact recovery provided that $(\alpha-\beta)^2\ge c\left(\alpha+(K-1)\beta\right)$ for a positive constant $c$. Besides, \citet{amini2018semidefinite} proposed another SDP relaxation of Problem \eqref{MLE} and showed that this SDP exactly recovers the communities with high probability if $(\alpha-\beta)^2 \ge cK(\alpha+K\beta)$ for a positive constant $c$. Despite the nice property of SDP-based approaches that they do not require any initial estimate of the partition or local refinement, solving the SDP problem is usually computationally prohibitive for large-scale data sets.  We refer the reader to a survey by \citet{li2018convex} for more results on convex relaxation methods for community detection.   

\vskip -0.1in
\begin{table}[t]
\caption{Comparison of recovery conditions and time complexities of the surveyed methods for exact recovery in the SBM ($K\ge 2$).}
\label{table-0}
\begin{center}
\begin{footnotesize}
\begin{tabular}{ccccc}
\toprule
{\bf References} & {\bf Conditions} &  {\bf Complexities} \\
\midrule
\citet{mcsherry2001spectral} & Not optimal & Polynomial \\ 
\citet{yun2016optimal}  & Optimal & $\mO(n\mathbf{poly}\log n)$  \\
\citet{su2019strong} & Not optimal & Polynomial \\
\midrule
\citet{amini2018semidefinite} & Not optimal & Polynomial\\
\citet{fei2018exponential} & Not optimal & Polynomial \\
\midrule
\citet{abbe2015community} & Optimal & $\mO(n^{1+1/\log\log n})$ \\
\citet{gao2017achieving} & Optimal & Polynomial \\ 
\midrule
{\bf Ours} & {\bf Optimal} &  $\mO\left(\frac{n\log^2n}{\log\log n}\right)$\\
\bottomrule
\end{tabular}
\end{footnotesize}
\end{center}
\vskip -0.2in
\end{table}

We would also like to mention some algorithms for the considered problem that use other techniques. \citet{abbe2015community} developed a two-stage algorithm that consists of the Sphere-comparison sub-routine for detecting communities almost exactly and the Degree-profiling sub-routine for identifying the communities exactly. Moreover, it recovers the communities exactly with high probability all the way down to information-theoretic threshold in $\mO(n^{1+1/\log\log n})$ time in the considered SBM. Besides, \citet{gao2017achieving} proposed a two-stage algorithm that needs a weakly consistent initialization and refines it by optimizing the local penalized maximum likelihood function for each node separately. In the considered SBM, their proposed method achieves exact recovery at the information-theoretic limit in polynomial time. We refer the reader to \citet{amini2013pseudo,zhang2016minimax,zhou2020rate} for more likelihood-based approach. Recently, \citet{wang2020nearly} proposed a non-convex approach that involves initializing a generalized power method with a power method for solving a regularized ML formulation of the binary symmetric SBM. Their method runs in nearly-linear time and is among the most efficient in the literature that achieves exact recovery at the information-theoretic limit. There are still many other interesting methods for the considered problem, such as the mean field method in \citet{zhang2020theoretical}, a variant of Lloyd’s algorithm in \citet{lu2016statistical}, and the modularity-based method in \citet{cohen2020power}. Due to the limitation of space, we shall not discuss further here.

\subsection{Our Contribution}\label{sec:oc}

In this work, we propose a simple and scalable method that can achieve the optimal exact recovery threshold in the symmetric SBM. Our strategy is simply to apply the projected power method to tackle Problem \eqref{MLE} directly. Specifically, it starts with an initial point that satisfies a certain partial recovery condition and then applies projected power iterations to refine the iterates successively. In the logarithmic degree regime of the symmetric SBM, we prove that the proposed method achieves exact recovery at the information-theoretic limit. Moreover, we show that it takes $\mO(\log n/\log\log n)$ projected power iterations to obtain the underlying communities. Besides, we demonstrate that each projected power iteration is equivalent to a minimum-cost assignment problem (MCAP), which can be solved in $\mO(n\log n)$ time. These yield that the proposed method runs in $\mO\left(n\log^2n/\log\log n\right)$ time with a qualified initialization. This is competitive with the most efficient algorithms in the literature for the considered problem. It is worth noting that despite the simplicity of the proposed method, it only requires a partial recovery condition for the initial point, which is generally milder than almost exact recovery conditions that are needed for most existing two-stage algorithms; see, e.g., \citet[Algorithm 2]{gao2017achieving}, \citet[Algorithm 2]{yun2014accurate}, and \citet[Sphere-comparison algorithm]{abbe2015community}.

Our work also contributes to the emerging area of provable non-convex methods. In particular, our result indicates that 
the ML formulation of the symmetric SBM, albeit non-convex and discrete, can be solved via a carefully designed, yet simple, iterative procedure. Prior to our work, such discrete optimization problem is usually handled either by SDP relaxation (see, e.g., \citet{amini2018semidefinite,fei2018exponential}) or by non-convex but continuous relaxation (see, e.g., \citet{bandeira2016low}). We believe that the proposed non-convex approach can be extended to other structured discrete optimization problems; cf. \citet{liu2017discrete}. 

The rest of this paper is organized as follows. In Section \ref{sec:preli}, we introduce the proposed method for exact community recovery and present the main results of this paper. In Sections \ref{sec:pf-main}, we prove the main results. We then report some numerical results in Section \ref{sec:num} and conclude in Section \ref{sec:con}.

\emph{Notation.} Let $\R^n$ be the $n$-dimensional Euclidean space and $\|\cdot\|$ be the Euclidean norm. We write matrices in capital bold letters like $\bA$, vectors in bold lower case like $\bm{a}$, and scalars as plain letters. Given a matrix $\bA$, we use $\|\bA\|$ to denote its spectral norm, $\|\bA\|_F$ its Frobenius norm, and $a_{ij}$ its $(i,j)$-th element. Given a positive integer $n$, we denote by $[n]$ the set $\{1,\dots,n\}$. Given a discrete set $S$, we denote by $|S|$ the cardinality of $S$. We use $\bo_n$ and $\bE_n$ to denote the $n$-dimensional all-one vector and $n\times n$ all-one matrix, respectively. We use $\Pi_K$ to denote the collections of all $K\times K$ permutation matrices. We use $\mathbf{Bern}(p)$ to denote the Bernoulli random variable with mean $p$.


\section{Preliminaries and Main Results}\label{sec:preli}

In this section, we formally set up the considered problem in the SBM, present the proposed algorithm, and give a summary of our main results. To proceed, we introduce clustering matrices for representing community structures and the symmetric stochastic block model (SBM) for generating observed graphs. 

\begin{defi}\label{memship-matrix}
We say that $\bH \in \R^{n\times K}$ is a clustering matrix if it takes the form of 
\begin{align}\label{form-H}
h_{ik} = 
\begin{cases}
1,\quad \text{if}\ i \in \mI_k, \\
0,\quad \text{otherwise}
\end{cases}
\end{align}
\vskip -0.1in
for some $\mI_1,\dots,\mI_K$ such that ${\cup}_{k=1}^K\mI_k = [n]$ and $\mI_k \cap \mI_\ell = \emptyset$ for all $1 \le k \neq \ell \le K$. Moreover, we say that $\bH \in \R^{n\times K}$ is a balanced clustering matrix if it satisfies the above requirement with $|\mI_k|=m$ for all $k\in [K]$. For simplicity, we use $\M_{n,K},\H_{n,K}$ to denote the collections of all such clustering and balanced clustering matrices, respectively.  
\end{defi}
Intuitively, a family of sets $\mI_1,\dots,\mI_K$ represents a partition of $n$ nodes into $K$ communities such that $h_{ik}=1$ if node $i$ belongs to the community encoded by $\mI_k$ and $h_{ik}=0$ otherwise. Given a fixed $\bH \in \M_{n,K}$, $\bH\bQ$ for any $\bQ \in \Pi_K$ represents the same community structure as $\bH$ up to a permutation of the labels. 

\begin{defi}[Symmetric SBM]\label{SBM}
Let $n \ge 2$ be the number of vertices, $K \ge 2$ be the number of communities, and $p,q\in [0,1]$ be parameters of the connectivity probabilities. Furthermore, let $\bHs \in \H_{n,K}$ represent a unknown partition of $n$ vertices into $K$  equal-sized communities. We say that a random graph $G$ is generated according to the symmetric SBM with parameters $(n,K,p,q)$ and $\bHs$ if $G$ has a vertex set $V=[n]$ and the elements $\{a_{ij}\}_{1\le i \le j \le n}$ of its adjacency matrix $\bA$ are generated independently by 
\begin{align}
a_{ij} \sim \left\{
	\begin{aligned}
	\mathbf{Bern}(p),\quad & \text{if}\ \ \bh_i^{*^T}\bh^*_j = 1,\\
	\mathbf{Bern}(q),\quad & \text{if}\ \ \bh_i^{*^T}\bh^*_j = 0,
	\end{aligned}
	\right. \label{eq:SBM}
\end{align}
where $\bh_i^{*^T}$ is the $i$-th row of $\bHs$. 
\end{defi}
Intuitively, this model states that given a true partition of $n$ vertices into $K$ unknown communities of equal size, a random graph $G$ is generated by independently connecting each pair of vertices with probability $p$ if they belong to the same community and with probability $q$ otherwise.   

Given one observation of such $G$, our goal is to develop a simple and scalable algorithm that outputs the true partition, i.e., $\bHs\bQ$ for some $\bQ \in \Pi_K$, with high probability. Since exact recovery requires the node degree to be at least logarithmic (see, e.g., \citet[Section 2.5]{abbe2017community}), we focus on the logarithmic sparsity regime of the symmetric SBM in this work, i.e.,
\begin{align}\label{p-q}
p=\alpha\frac{\log n}{n}\quad \text{and}\quad q=\beta\frac{\log n}{n},
\end{align}
where $\alpha, \beta$ are positive constants. 

The main ingredient in our approach is to apply the projected power method for solving Problem \eqref{MLE}. Specifically, the projected power step takes the form of 
\begin{align}\label{update-H}
\bH^{k+1} \in \mT(\bA\bH^k),\ \text{for all}\ k\ge 1,
\end{align}
where $\mT:\R^{n\times K} \rightrightarrows \R^{n\times K}$ denotes the projection operator onto $\mH$; i.e., for any $\bC \in \R^{n\times K}$, 
\begin{align}\label{project-H}
\mT(\bC) = \argmin\left\{ \|\bH-\bC\|_F:\ \bH \in \mH \right\}.
\end{align}
Note that Problem \eqref{MLE} can be interpreted as a principal component analysis (PCA) problem with some structural constraints. This motivates us to propose a variant of the power iteration as in \eqref{update-H} for solving it. Actually, many algorithms of similar flavor for solving PCA problems with other structural constraints have appeared in the literature; see, e.g., \citet{boumal2016nonconvex,chen2018projected,deshpande2014cone,journee2010generalized}.

One important step towards guaranteeing rapid convergence of the projected power method for solving Problem \eqref{MLE} is to identify a proper initial point $\bH^0$, which constitutes another ingredient in our approach. Specifically, the initial point $\bH^0$ is required to satisfy the following condition: 
\begin{align}\label{init-partial}
\bH^0 \in \M_{n,K}\quad \st\ \min_{\bQ \in \Pi_K}\|\bH^0 - \bHs\bQ \|_F \le \theta\sqrt{n}, 
\end{align}
where $\theta$ is a constant that will be specified later. We remark that the condition \eqref{init-partial} is equivalent to that $\bH^0$ satisfies a partial recovery condition; see, e.g., \citet[Definition 4]{abbe2017community}. 

We now summarize the proposed method for solving Problem \eqref{MLE} in Algorithm \ref{alg:PGD}. It starts with an initial point $\bH^0$ satisfying \eqref{init-partial} and projects $\bH^0$ onto $\mH$ to make the partition balanced. Then, it refines the iterates via projected power iterations $N$ times, where $N$ is an input parameter of the algorithm, 
and outputs $\bH^{N+1}$.

\begin{algorithm}[!htbp]
	\caption{Projected Power Method for Solving Problem \eqref{MLE}}  
	\begin{algorithmic}[1]  
		\STATE \textbf{Input:} adjacency matrix $\bA$, positive integer $N$
		\STATE \textbf{Initialize} an $\bH^0$ satisfying \eqref{init-partial} 
		\STATE set $\bH^1 \leftarrow \mT(\bH^0)$ 		
		\FOR{$k=1,2,\dots,N$}
		\STATE set $\bH^{k+1}\leftarrow \mT(\bA\bH^k)$
		\ENDFOR
		\STATE \textbf{Output} $\bH^{N+1}$
	\end{algorithmic}
	\label{alg:PGD}
\end{algorithm}

We next present the main theorem of this paper, which shows that Algorithm \ref{alg:PGD} achieves exact recovery  down to the information-theoretic threshold and also provides its explicit iteration complexity bound. 

\begin{thm}\label{thm-1}
Let $\bA$ be the adjacency matrix of a realization of the random graph generated according to the symmetric SBM with parameters $(n,K,p,q)$ and a planted partition $\bHs \in \H_{n,K}$. Suppose that $p,q$ satisfy \eqref{p-q} with $\sqrt{\alpha}-\sqrt{\beta} > \sqrt{K}$ and $n$ is sufficiently large. Then, there exists a constant $\gamma>0$, whose value depends only on $\alpha$, $\beta$, and $K$, such that the following statement holds with probability at least $1-n^{-\Omega(1)}$: If the initial point satisfies the partial recovery condition in \eqref{init-partial} such that
\begin{align}\label{theta}
\theta = \frac{1}{4}\min\left\{\frac{1}{\sqrt{K}},\frac{\gamma\sqrt{K}}{16(\alpha-\beta)}\right\},
\end{align}
Algorithm \ref{alg:PGD} outputs a true partition in $\lceil 2\log\log n \rceil+\left\lceil \frac{2\log n}{\log\log n} \right\rceil+2$  projected power iterations.
\end{thm}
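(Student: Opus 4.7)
The plan is to control the per-iteration Hamming error
\begin{align*}
\xi_k \;:=\; \tfrac{1}{2}\min_{\bQ\in\Pi_K}\|\bH^k-\bHs\bQ\|_F^2,
\end{align*}
which is the number of vertices mis-labeled by $\bH^k$ under the best permutation of communities. The partial-recovery hypothesis gives $\xi_0\le\tfrac12\theta^2 n$, and Theorem~\ref{thm-1} reduces to showing that $\xi_{N+1}=0$ for $N$ as stated, so that $\bH^{N+1}=\bHs\bQ$ for some $\bQ\in\Pi_K$.

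To obtain a recursion for $\xi_k$, I would fix the optimal permutation $\bQ^k$ at step $k$ and split
\begin{align*}
\bA\bH^k \;=\; \E[\bA]\,\bHs\bQ^k \;+\; \E[\bA](\bH^k-\bHs\bQ^k) \;+\; \bE\bH^k,\qquad \bE:=\bA-\E[\bA].
\end{align*}
A direct calculation with $\E[\bA]=(p-q)\bHs(\bHs)^T+q\bo_n\bo_n^T-p\bI$ shows that, up to a row-wise constant shift that the projection annihilates, the signal term opens a gap of $(p-q)m=(\alpha-\beta)(\log n)/K$ in favor of the correct community at every row. The deterministic perturbation is at most $O(p\xi_k)$ entrywise since only $\xi_k$ rows of $\bH^k-\bHs\bQ^k$ are non-zero, which is harmless whenever $\xi_k=o(n)$. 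For the stochastic term I would combine the spectral bound $\|\bE\|=O(\sqrt{\log n})$ (as in \cite{fei2020achieving}) with a row-wise Bernstein-type inequality showing that the noise in row $i$ restricted to the ``misplaced'' columns has size $O(\sqrt{\xi_k\log n})$ on all but a negligible set of rows. Comparing these three bounds row-by-row with the signal gap $(\alpha-\beta)(\log n)/K$ should yield a two-term contraction of the form
\begin{align*}
\xi_{k+1} \;\le\; \frac{C}{\log n}\,\xi_k \;+\; \xi^{\mathrm{gen}},
\end{align*}
where $\xi^{\mathrm{gen}}$ counts the vertices mis-labeled by the \emph{genie} projection $\mT(\bA\bHs)$.

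The iteration count then comes from unwinding this recursion. Starting from $\xi_0=\Theta(n)$, the first phase of $\lceil 2\log\log n\rceil$ iterations provides the geometric contraction required to bring $\xi_k$ into the regime $\xi_k\lesssim n/\log n$ where the row-wise perturbation lemma applies; after that the $C/\log n$ contraction kicks in and $\lceil 2\log n/\log\log n\rceil$ further iterations drive $\xi_k$ below $1$ (since $(\log n)^{2\log n/\log\log n}=n^2\gg n$). The remaining two projected power steps then absorb the genie residual, giving the claimed iteration bound.

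The main obstacle, and the step that actually uses the sharp condition $\sqrt{\alpha}-\sqrt{\beta}>\sqrt{K}$, is showing $\xi^{\mathrm{gen}}=0$ with probability $1-n^{-\Omega(1)}$. For each vertex $i$, failure is the event that some other community produces a larger Bernoulli neighbourhood count than its true one, subject to the balance constraint of $\mT$. A Chernoff estimate identical to the one in \cite{abbe2015community} yields per-vertex failure probability at most $\exp(-((\sqrt{\alpha}-\sqrt{\beta})^2/K)\log n)$; union-bounding over the $n$ vertices (and the $\binom{K}{2}$ inter-community comparisons) gives overall failure probability $n^{1-(\sqrt{\alpha}-\sqrt{\beta})^2/K}=n^{-\Omega(1)}$ exactly when the strict inequality is in force. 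The balance constraint is handled by a short swap argument showing that no pair of borderline vertices can profitably exchange labels w.h.p. I expect this genie step to be the most delicate piece of the proof, because any slack in the Chernoff exponent would cost the optimal constant in the recovery threshold.
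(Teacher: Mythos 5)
Your overall architecture matches the paper's: a two-phase analysis (constant-factor contraction for $\lceil 2\log\log n\rceil$ steps, then an $O(1/\sqrt{\log n})$-rate contraction for $\lceil 2\log n/\log\log n\rceil$ steps, then finite termination), with the sharp threshold entering only through a Chernoff estimate showing that the genie-aided projection $\mT(\bA\bHs)$ returns $\bHs$ with a gap of order $\log n$ (the paper's Lemmas 6--7). However, two steps in your row-wise plan have genuine gaps. First, the ``row-wise Bernstein-type inequality'' on the noise term $\bE\bH^k$ cannot be applied as stated: the iterate $\bH^k$ is a function of $\bA$ and hence statistically dependent on $\bE=\bA-\E[\bA]$, so $\sum_{l}\bE_{il}(h^k_{lj}-(\bHs\bQ^k)_{lj})$ is not a sum of independent terms with a fixed index set. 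One must either union-bound over all candidate clusterings (too costly) or run a leave-one-out argument. The paper avoids this entirely by using the uniform spectral bound $\|\bE\|\lesssim\sqrt{\log n}$ together with a Frobenius-norm Lipschitz property of the balanced projection (its Lemma \ref{lem:LP-cont}), which holds for \emph{every} $\bH$ simultaneously. Second, your first-phase ``geometric contraction'' is asserted but never derived: when $\xi_k=\Theta(n)$ the deterministic perturbation $\E[\bA](\bH^k-\bHs\bQ^k)$ is of the same order as the signal gap $(p-q)m$, and a row-by-row comparison does not by itself show that the number of misclassified vertices halves. The paper obtains the factor-$1/2$ contraction in this regime from a structural bound $\|\bA(\bH-\bHs\bQ)\|_F\le(4\varepsilon n(p-q)/\sqrt{K}+\|\Delta\|)\|\bH-\bHs\bQ\|_F$ (Lemma \ref{lem:contra}), which exploits the rank-$K$ structure of $\E[\bA]$ and the balancedness of $\bH$; the constant in $\theta$ is chosen precisely to make the resulting rate equal $1/2$. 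Relatedly, the projection $\mT$ is a \emph{balanced} assignment, not a row-wise argmax, so ``comparing the three bounds row-by-row with the signal gap'' is not a valid description of what $\mT$ outputs; the paper needs the total-unimodularity/LP/KKT machinery (Lemmas \ref{lem:form-H}--\ref{lem:closed-form-LP}) to reduce the projection to a gap condition, and your proposed ``short swap argument'' would have to be upgraded to something of that strength for every iteration, not just the genie step.

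Your treatment of the endgame is also slightly off relative to the paper: there is no additive ``genie residual'' to absorb over two extra steps. Once the Frobenius error drops below $\sqrt{\gamma\log n}$ (i.e., fewer than about $\gamma\log n/2$ misclassified vertices), a single projected power step lands exactly on $\bHs\bQ$ (Lemma \ref{lem:one-step-conv}), because the perturbation each such vertex contributes to any row of $\bA\bH$ is at most $1$ and the total is dominated by the $\gamma\log n$ gap. That said, your identification of the genie/Chernoff step as the place where $\sqrt{\alpha}-\sqrt{\beta}>\sqrt{K}$ is consumed, and your iteration-count bookkeeping, are both correct and agree with the paper.
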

Before we proceed, some remarks are in order. 
First, an $\bH^0$ satisfying \eqref{init-partial} can be found by a host of initialization procedures in existing methods. For example, \citet[Algorithm 2]{gao2017achieving} and \citet[Algorithm 2]{yun2014accurate} respectively proposed spectral clustering based initialization procedures that can obtain an $\bH^0$ satisfying
\begin{align}\label{init-weak}
\bH^0 \in \M_{n,K}\ \st \min_{\bQ \in \Pi_K}\|\bH^0 - \bHs\bQ \|_F \lesssim \sqrt{\frac{n}{\log n}}. 
\end{align}
These initializations are cheap to compute and automatically fulfill the partial recovery requirement in \eqref{init-partial} when $n$ is sufficiently large. Note that compared to our projected power method, the refinement procedures in \citet[Algorithm 1]{gao2017achieving} and \citet[Algorithm 1]{yun2014accurate} are rather complicated. Besides, we remark that \eqref{init-weak} is a condition of almost exact recovery (see, e.g., \citet[Definition 4]{abbe2017community}). It is much more stringent than \eqref{init-partial}, which is merely a condition of partial recovery. 

Second, as we show in Proposition \ref{prop:MCAP}, the projection in \eqref{project-H} is equivalent to a minimum-cost assignment problem (MCAP), which is a special linear programming (LP) problem and can be solved very efficiently; see \citet{tokuyama1995geometric}. We refer the reader to Section A.1 of the appendix for the formal definition of the MCAP. 

\begin{prop}\label{prop:MCAP}
Problem \eqref{project-H} is equivalent to a minimum-cost assignment problem, which can be solved in $\mO(K^2n\log n)$ time. 
\end{prop}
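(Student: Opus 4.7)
The plan is to unfold the Frobenius objective, reduce Problem \eqref{project-H} to a balanced assignment problem, and then invoke a known geometric/structured assignment algorithm to get the stated running time.

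First I would rewrite the objective. For any $\bH \in \mH$, every row has exactly one nonzero entry equal to $1$, so $\|\bH\|_F^2 = n$ is a constant independent of $\bH$. Expanding,
\begin{equation*}
\|\bH - \bC\|_F^2 \;=\; n \;-\; 2\langle \bH, \bC \rangle \;+\; \|\bC\|_F^2,
\end{equation*}
so Problem \eqref{project-H} is equivalent to maximizing $\langle \bH, \bC \rangle = \sum_{i=1}^n \sum_{k=1}^K h_{ik} c_{ik}$ over $\mH$, or equivalently minimizing $\sum_{i,k} h_{ik}(-c_{ik})$.

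Next, I would recast this as an assignment problem. Because $\bH \in \mH$ encodes a partition $\sigma:[n]\to[K]$ with $|\sigma^{-1}(k)|=m$ for every $k\in[K]$ (equivalently, $\bH\bo_K=\bo_n$ and $\bH^T\bo_n = m\bo_K$), the problem becomes
\begin{equation*}
\min\Bigl\{\, \sum_{i=1}^n (-c_{i\sigma(i)}) \;:\; \sigma:[n]\to[K],\ |\sigma^{-1}(k)|=m\ \forall k\in[K]\,\Bigr\}.
\end{equation*}
To put this in the standard MCAP form, I would duplicate each of the $K$ groups $m$ times to create $n$ slots $(k,j)$ for $k\in[K]$, $j\in[m]$, all copies of group $k$ sharing the same cost $-c_{ik}$ for node $i$. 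Assigning the $n$ nodes bijectively to these $n$ slots at minimum total cost is a genuine MCAP whose optimal value and optimal $\sigma$ coincide with the projection. (Equivalently, this is a balanced transportation problem with $n$ unit-supply sources and $K$ sinks of capacity $m$, which is well known to admit integral optima and to be equivalent to an assignment problem.) This establishes the equivalence claim.

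For the complexity, I would exploit the fact that the cost matrix has only $K$ distinct columns: every one of the $m$ copies of group $k$ offers the same cost $-c_{ik}$ to node $i$. This is precisely the kind of structured (``geometric'') assignment problem handled by \citet{tokuyama1995geometric}, which solves an MCAP of this form in $\mO(K^2 n \log n)$ time; I would simply quote their result on assignment problems whose cost function factors through a $K$-valued categorical structure. The main obstacle is the last step: rather than proving the algorithmic bound from scratch, I need to check that the reduction produces an instance that satisfies the hypotheses of the Tokuyama–Nakano algorithm (in particular, that the ``at most $K$ distinct columns'' structure is exactly what drives their $\mO(K^2 n \log n)$ bound, as opposed to the general cubic Hungarian bound). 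Once that fit is verified, the proposition follows directly.
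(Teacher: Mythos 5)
Your proposal is correct and follows essentially the same route as the paper: drop the constant $\|\bH\|_F^2=n$ to reduce the projection to maximizing $\langle\bC,\bH\rangle$ over $\mH$, identify this as a minimum-cost $\bpi$-assignment problem with cost matrix $-\bC$ and $\bpi=m\bo_K$, and invoke \citet[Theorem 2.1, Proposition 3.4]{tokuyama1995geometric} for the $\mO(K^2n\log n)$ bound. The only superfluous step is duplicating each group into $m$ unit-capacity slots and then re-exploiting the ``$K$ distinct columns'' structure: the $\bpi$-assignment problem as the paper (and Tokuyama--Nakano) defines it is already posed on the complete bipartite graph with $K$ sinks of capacities $\pi_k$, so the reduction lands directly in the form their algorithm handles and no square assignment instance is needed.
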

This, together with the time complexity of computing the matrix product $\bA\bH$ for some $\bH \in \R^{n\times K}$ and Theorem \ref{thm-1}, immediately implies the time complexity of Algorithm \ref{alg:PGD} with a qualified initialization. 
\begin{coro}\label{coro:time}
Consider the setting of Theorem \ref{thm-1}. If Algorithm \ref{alg:PGD} uses an initial point that satisfies the partial recovery condition in \eqref{init-partial} with $\theta$ in \eqref{theta}, then it outputs a true partition in 
$$\mO\left(\left(K^2+3\alpha+3(K-1)\beta\right)\frac{n\log^2n}{\log\log n}\right)$$ 
time with probability at least $1-n^{-\Omega(1)}$. 
\end{coro}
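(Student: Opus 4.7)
The plan is to combine three pieces already in hand: Theorem~\ref{thm-1} bounds the number of iterations, Proposition~\ref{prop:MCAP} bounds the projection cost, and a Chernoff bound on the number of edges of $G$ controls the cost of each matrix product $\bA\bH^k$. Multiplying per-iteration cost by iteration count, and applying a union bound over the underlying probabilistic events, then yields the stated runtime.

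First I would analyze one projected power step $\bH^{k+1}\in\mT(\bA\bH^k)$. It has two sub-tasks: forming $\bA\bH^k$ and then projecting onto $\mH$. By Proposition~\ref{prop:MCAP}, the projection step takes $\mO(K^2 n\log n)$ time. For the matrix product, I would use a standard sparse-dense multiplication whose running time is $\mO(K\cdot\mathrm{nnz}(\bA))$, where $\mathrm{nnz}(\bA)=2|E(G)|$ is the number of nonzero entries of the adjacency matrix $\bA$.

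Next I would bound $\mathrm{nnz}(\bA)$. Since $|E(G)|$ is a sum of independent Bernoulli variables with mean
\begin{equation*}
\E[|E(G)|]=\binom{m}{2}K p+\binom{K}{2}m^2 q=\frac{\bigl(\alpha+(K-1)\beta\bigr)\,n\log n}{2K}\bigl(1+o(1)\bigr),
\end{equation*}
where $m=n/K$, a standard multiplicative Chernoff bound yields
\begin{equation*}
\mathrm{nnz}(\bA)=2|E(G)|\le \frac{3\bigl(\alpha+(K-1)\beta\bigr)\,n\log n}{K}
\end{equation*}
with probability at least $1-n^{-\Omega(1)}$ (this uses the assumption of the logarithmic degree regime, which makes $\E[\mathrm{nnz}(\bA)]=\Omega(\log n)$ and hence gives exponential concentration). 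The matrix product therefore costs at most $3(\alpha+(K-1)\beta)\,n\log n$ per iteration, so the total per-iteration cost is $\mO\bigl((K^2+3\alpha+3(K-1)\beta)\,n\log n\bigr)$.

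Finally, Theorem~\ref{thm-1} guarantees that Algorithm~\ref{alg:PGD} outputs a true partition within $\lceil 2\log\log n\rceil+\lceil 2\log n/\log\log n\rceil+2=\mO(\log n/\log\log n)$ projected power iterations, with probability at least $1-n^{-\Omega(1)}$. Taking a union bound over the success event of Theorem~\ref{thm-1} and the edge-concentration event, and multiplying per-iteration cost by iteration count, yields the advertised $\mO\bigl((K^2+3\alpha+3(K-1)\beta)\,n\log^2 n/\log\log n\bigr)$ runtime. The proof is short and essentially bookkeeping; the only item requiring a bit of care is making sure the two high-probability failure events are both of the form $n^{-\Omega(1)}$ so that a single union bound preserves the overall $1-n^{-\Omega(1)}$ success probability, and verifying that a natural implementation of sparse-dense multiplication (using a precomputed cluster label for each row of $\bH^k\in\M_{n,K}$) indeed achieves the $\mO(K\cdot\mathrm{nnz}(\bA))$ bound.
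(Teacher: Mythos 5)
Your proposal is correct and follows essentially the same route as the paper: the per-iteration cost is split into the sparse matrix product (controlled by the sparsity of $\bA$) plus the $\mO(K^2 n\log n)$ projection from Proposition \ref{prop:MCAP}, and this is multiplied by the iteration count from Theorem \ref{thm-1} with a union bound over the probabilistic events. The only cosmetic difference is that you concentrate the total edge count $|E(G)|$ with a single Chernoff bound, whereas the paper applies Bernstein's inequality to each column's support size $\|\ba\|_0$ and union-bounds over the $n$ columns (which is where it invokes $\sqrt{\alpha}-\sqrt{\beta}>\sqrt{K}$ to make the resulting $n^{1-(\alpha+(K-1)\beta)/K}$ failure probability polynomially small); both yield the same bound $\mathrm{nnz}(\bA)\le \frac{3(\alpha+(K-1)\beta)}{K}\,n\log n$ with probability $1-n^{-\Omega(1)}$.
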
 
Finally, it is worth noting that the proposed method in Algorithm \ref{alg:PGD} can be viewed as an extension of that in \citet{wang2020nearly}, both of which are essentially the projected gradient method applied to the corresponding ML formulation. In particular, when $K=2$, the projection operators in these two works both admit a closed-form solution, which can be done via partial sorting. Moreover, our method can be applied to do community detection in the setting of multiple communities, i.e., $K \ge 2$, while that in \citet{wang2020nearly} only works when $K=2$. Besides, the method in \citet{wang2020nearly} requires a spectral initialization to satisfy a condition of almost exact recovery. By contrast, any point satisfying the partial recovery condition in \eqref{init-partial},  including some spectral initializations, is a qualified initialization for Algorithm \ref{alg:PGD}.

\section{Proofs of Main Results}\label{sec:pf-main}

In this section, we provide the proofs of our main results in Section \ref{sec:preli}. The complete proofs of the theorem, propositions, and lemmas can be found in Sections B, C of the appendix. 

\subsection{Analysis of the Projected Power Iteration}\label{sub-sec:pgd}
In this subsection, we study the convergence behavior of the projected power iterations in Algorithm \ref{alg:PGD}. Our main idea is to show the contraction property of the projection operator $\mT$ in the symmetric SBM. Let 
$$\mP = \{\bH\in\R^{n\times K}: \bH\bo_K = \bo_n, \bH^T\bo_n=m\bo_K, \bH \ge 0  \}.$$ 
To begin, we present a lemma that establishes an equivalence among the set of extreme points of this polytope, the discrete set $\mH$, and the collection of all balanced clustering matrices $\H_{n,K}$.   

\begin{lemma}\label{lem:form-H}
The following statements are equivalent: \\
(i) $\bH \in \mH$.\qquad\qquad (ii) $\bH$ is an extreme point of $\mP$. \\
(iii) $\bH \in \H_{n,K}$. 
\end{lemma}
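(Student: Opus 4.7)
The plan is to establish the chain (i) $\Leftrightarrow$ (iii) by directly unpacking the definitions, to handle (iii) $\Rightarrow$ (ii) by a short algebraic manipulation, and to devote the main effort to (ii) $\Rightarrow$ (i).

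For (i) $\Leftrightarrow$ (iii), I would observe that if $\bH \in \mH$, then each row $i$ has entries in $\{0,1\}$ summing to $1$ (by $\bH\bo_K = \bo_n$), so exactly one entry per row equals $1$. Setting $\mI_k := \{i : h_{ik}=1\}$ yields a partition of $[n]$, and $\bH^T\bo_n = m\bo_K$ enforces $|\mI_k|=m$, so $\bH \in \H_{n,K}$. The reverse implication is immediate from the defining form \eqref{form-H}.

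For (iii) $\Rightarrow$ (ii), take $\bH \in \H_{n,K}$ and suppose $\bH = \lambda \bH_1 + (1-\lambda)\bH_2$ for some $\bH_1,\bH_2 \in \mP$ and $\lambda \in (0,1)$. At any $(i,k)$ with $h_{ik}=0$, non-negativity of $\bH_1$ and $\bH_2$ forces $(\bH_1)_{ik} = (\bH_2)_{ik} = 0$. At the unique $k$ in row $i$ with $h_{ik}=1$, the row-sum constraint $\bH_1 \bo_K = \bo_n$ then forces $(\bH_1)_{ik}=1=h_{ik}$, and similarly for $\bH_2$. Hence $\bH_1 = \bH = \bH_2$, so $\bH$ is an extreme point.

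For (ii) $\Rightarrow$ (i), the main task is to show that every extreme point of $\mP$ has $\{0,1\}$ entries; once that is in hand, the row-sum constraint forces exactly one $1$ per row, and combined with $\bH^T \bo_n = m\bo_K$ we obtain $\bH \in \mH$. The cleanest route is to invoke the standard fact that the constraint matrix of $\mP$, being the vertex--edge incidence matrix of the complete bipartite graph $K_{n,K}$, is totally unimodular; since the right-hand side vector $(\bo_n, m\bo_K)$ has integer entries, every vertex of $\mP$ has integer coordinates, and $\bH \ge 0$ with rows summing to $1$ forces entries in $\{0,1\}$. A self-contained alternative is a cycle-perturbation argument: if some $h_{ik} \in (0,1)$, then integrality of row and column sums implies that every row and every column containing a fractional entry contains at least two fractional entries, so one can trace a closed sequence of fractional positions alternating between rows and columns, and perturb $\bH$ by adding $+\epsilon$ and $-\epsilon$ alternately along this closed walk to write $\bH$ as a non-trivial convex combination of two distinct points in $\mP$, contradicting extremality.

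The main obstacle is (ii) $\Rightarrow$ (i). The total-unimodularity route is short but leans on an external result; writing the cycle-perturbation argument in full requires some care in constructing the alternating closed sequence and choosing $\epsilon$ small enough to preserve feasibility of both perturbed matrices. Everything else in the lemma is bookkeeping.
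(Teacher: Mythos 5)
Your proposal is correct and takes essentially the same route as the paper: the hard direction (ii) $\Rightarrow$ (i) is handled by recognizing the equality-constraint matrix as the totally unimodular incidence matrix of a complete bipartite graph and invoking integrality of the vertices of a polyhedron with a TU constraint matrix and integer right-hand side, while (iii) $\Rightarrow$ (ii) is the same elementary convex-combination argument (you phrase it directly, the paper argues by contradiction). The self-contained cycle-perturbation alternative you sketch does not appear in the paper but would also work.
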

It is worth noting that the proof this lemma builds on the total unimodularity (see, e.g., \citet{heller1956extension,hoffman2010integral}) of the equality constraint matrix of the polytope $\mP$. Equipped with this lemma, we can show that Problem \eqref{project-H} is equivalent to an LP. 
\begin{prop}\label{prop:LP}
For any $\bC \in \R^{n\times K}$, Problem \eqref{project-H} is equivalent to the following LP:
\begin{align}\label{LP-H}
\mT(\bC) = \argmax\left\{ \langle \bC,\bH \rangle:\ \bH \in \mP \right\}.
\end{align}
\end{prop}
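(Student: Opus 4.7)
The plan is to reduce the projection problem \eqref{project-H} to the linear program \eqref{LP-H} by two elementary observations: (i) on the discrete set $\mH$ the quadratic term $\|\bH\|_F^2$ is constant, so the Frobenius distance minimization becomes a linear maximization; and (ii) by Lemma \ref{lem:form-H}, $\mH$ is exactly the set of extreme points of the polytope $\mP$, so the linear maximization over $\mH$ has the same optimal value and the same extreme-point optimizers as the LP over $\mP$.

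More concretely, first I would expand
\begin{equation*}
\|\bH-\bC\|_F^2 \;=\; \|\bH\|_F^2 - 2\langle \bC,\bH\rangle + \|\bC\|_F^2.
\end{equation*}
For any $\bH\in\mH$, each of the $n$ rows of $\bH$ contains exactly one $1$ and $K-1$ zeros (from the definition of $\mH$ in \eqref{set-H}), so $\|\bH\|_F^2 = n$. Since $\|\bC\|_F^2$ does not depend on $\bH$, minimizing $\|\bH-\bC\|_F$ over $\mH$ is equivalent to maximizing $\langle \bC,\bH\rangle$ over $\mH$; in particular the argmin and argmax sets coincide.

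Next I would enlarge the feasible region from $\mH$ to $\mP$. By Lemma \ref{lem:form-H}, $\mH$ is precisely the set of extreme points of the polytope $\mP$. Since $\bH\mapsto \langle\bC,\bH\rangle$ is linear and $\mP$ is a bounded polytope, the fundamental theorem of linear programming guarantees that the maximum of $\langle\bC,\bH\rangle$ over $\mP$ is attained at an extreme point of $\mP$, i.e., at some point of $\mH$. Consequently
\begin{equation*}
\max_{\bH\in\mP}\langle \bC,\bH\rangle \;=\; \max_{\bH\in\mH}\langle \bC,\bH\rangle,
\end{equation*}
and every extreme-point optimizer of the LP \eqref{LP-H} is an optimizer of the projection problem, and vice versa. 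Combining this with the first step yields the claimed identity $\mT(\bC)=\argmax\{\langle\bC,\bH\rangle:\bH\in\mP\}$ (interpreted as equality of the sets of optimal clustering matrices).

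The only real subtlety is the possibility of multiple optimizers: when the LP has a flat optimal face, its argmax over $\mP$ contains the convex hull of several extreme-point optima, while $\mT(\bC)\subseteq\mH$ consists only of these extreme points themselves. I would therefore either state the equivalence in the standard LP sense (equal optimal values, and every optimal extreme point of \eqref{LP-H} is a solution to \eqref{project-H}), or restrict attention to extreme-point optimizers of the LP, which is the natural reading in this algorithmic context. All of this hinges on Lemma \ref{lem:form-H}, so the one nontrivial ingredient is already in hand; the remainder of the proof is purely arithmetic.
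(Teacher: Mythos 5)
Your proposal is correct and follows essentially the same route as the paper: the paper's proof likewise observes that $\|\bH\|_F=\sqrt{n}$ is constant on $\mH$ (so the projection reduces to maximizing $\langle\bC,\bH\rangle$ over $\mH$) and then invokes Lemma \ref{lem:form-H} together with the fact that an LP over the bounded polytope $\mP$ attains its optimum at an extreme point. Your closing remark about a possibly flat optimal face is a fair caveat that the paper glosses over, and your resolution (reading the identity as equality of the sets of optimal extreme points) is the intended one.
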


Next, we characterize the optimal solutions of the LP in \eqref{LP-H} explicitly by exploiting the structure of the polytope $\mP$.
\begin{lemma}\label{lem:closed-form-LP}
For a matrix $\bC \in \R^{n\times K}$, it holds that $\bH \in \mT(\bC)$ if and only if 
\begin{align*}
h_{ik} = 
\begin{cases}
1,\ \text{if}\ i \in \mI_k, \\
0,\ \text{otherwise},
\end{cases}
\end{align*}
where $\mI_1,\dots,\mI_K$ satisfies (i)  ${\cup}_{k=1}^K\mI_k = [n]$, $\mI_k \cap \mI_\ell = \emptyset$, and $|\mI_k| = m$ for all $1\le k \neq \ell \le K$, and (ii) there exists $\bw \in \R^K$ such that
\begin{align}\label{form-C}
c_{ik} - c_{i\ell}  \ge w_k - w_\ell \ge  c_{jk} - c_{j\ell}
\end{align}
for all $i \in \mI_k$, $j\in \mI_\ell$, and $1\le k \neq \ell \le K$.
\end{lemma}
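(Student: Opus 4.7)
The plan is to derive the characterization through LP duality applied to the reformulation \eqref{LP-H} given by Proposition \ref{prop:LP}. First I would observe that since the polytope $\mP$ is nonempty and bounded, the LP has optimal solutions, and by Lemma \ref{lem:form-H} every extreme point of $\mP$ lies in $\H_{n,K}$; hence any $\bH \in \mT(\bC)$ can be taken to have the indicator form specified in the lemma, with sets $\mI_1,\dots,\mI_K$ that partition $[n]$ into $K$ blocks of equal size $m$. This reduces the task to proving that, among such balanced clustering matrices, the optimal ones are precisely those for which the inequalities \eqref{form-C} admit a solution $\bw\in\R^K$.

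Next I would write down the dual of the LP in \eqref{LP-H}. Attaching multipliers $u_i$ to the row constraints $\sum_k h_{ik}=1$ and $w_k$ to the column constraints $\sum_i h_{ik}=m$, the dual becomes
\begin{align*}
\min_{\bu,\bw}\ \sum_{i=1}^n u_i + m\sum_{k=1}^K w_k \quad \st\ u_i + w_k \ge c_{ik}\ \text{for all}\ i\in[n],\ k\in[K].
\end{align*}
Strong LP duality holds, and complementary slackness says that a primal-feasible $\bH$ and a dual-feasible $(\bu,\bw)$ are jointly optimal iff $h_{ik}>0$ implies $u_i+w_k=c_{ik}$.

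For the forward direction, given $\bH\in\mT(\bC)$ in the indicator form, I would use strong duality to pick an optimal dual $(\bu,\bw)$. Complementary slackness forces $u_i=c_{ik}-w_k$ whenever $i\in\mI_k$, and dual feasibility at $(i,\ell)$ for $i\in\mI_k$ then rewrites as $c_{ik}-c_{i\ell}\ge w_k-w_\ell$; applying the same reasoning with the roles of $(k,\ell)$ swapped for an index $j\in\mI_\ell$ gives $c_{j\ell}-c_{jk}\ge w_\ell-w_k$. Combining these two inequalities yields exactly the chain \eqref{form-C} with this same $\bw$. For the reverse direction, given the partition and a vector $\bw$ satisfying \eqref{form-C}, I would define $u_i := c_{ik}-w_k$ for $i\in\mI_k$ and verify dual feasibility $u_i+w_\ell\ge c_{i\ell}$ directly from the left half of \eqref{form-C}; since the constructed $(\bu,\bw)$ is dual feasible and complementary slackness holds by construction with the proposed primal $\bH$, both are optimal by weak duality, so $\bH\in\mT(\bC)$.

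The argument is essentially a clean application of LP duality, so no single step should be a serious obstacle; the only place that requires a bit of care is making sure the dual variables $u_i$ obtained from complementary slackness in the forward direction are well defined (this uses the fact that each $i$ belongs to exactly one $\mI_k$) and that the two halves of \eqref{form-C} truly come from dual feasibility at the two ``off-diagonal'' index pairs. Once the dual is set up correctly, the rest is bookkeeping.
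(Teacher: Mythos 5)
Your proposal is correct and follows essentially the same route as the paper: the paper writes down the KKT system of the LP in \eqref{LP-H} with multipliers $\bu$, $\bw$, $\bLam$ for the row, column, and nonnegativity constraints, and your explicit LP dual with complementary slackness is just a rephrasing of that same system (the dual constraint $u_i+w_k\ge c_{ik}$ is precisely $\lambda_{ik}\ge 0$). Both directions are argued identically, so there is nothing substantive to distinguish the two proofs.
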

When $K=2$, let $\bc_1$ and $\bc_2$ denote the first and second columns of $\bC \in \R^{n\times 2}$, respectively. In this scenario, Lemma \ref{lem:closed-form-LP} implies that solving the LP in \eqref{LP-H} boils down to finding the indices that correspond to the $n/2$ largest entries of the vector $\bc_1-\bc_2$, which can be done via median finding efficiently.  

Based on the above lemma, we can show that the projection operator $\mT$ in \eqref{project-H} possesses a Lipschitz-like property in spite of the fact that $\mH$ is a discrete set. 
\begin{lemma}\label{lem:LP-cont}
Let $\delta  > 0$, $\bC \in \R^{n\times K}$  be arbitrary and $m=n/K$. Suppose that there exists a family of index sets $\mI_1,\dots,\mI_K$ satisfying ${\cup}_{k=1}^K\mI_k = [n]$, $\mI_k \cap \mI_\ell = \emptyset$, and $|\mI_k| = m$ such that $\bC$ satisfies
\begin{align}\label{eq:LP-cont-cond}
c_{ik} - c_{i\ell} \ge \delta
\end{align}
for all  $i \in \mI_k$ and $1 \le k \neq \ell \le K$.
Then, for any $\bV \in \mT(\bC)$, $\bC^\prime \in \R^{n\times K}$, and $\bVp \in \mT(\bCp)$, it holds that 
\begin{align}\label{rst:LP-cont}
\|\bV - \bVp\|_F \le \frac{2\|\bC-\bCp\|_F}{\delta}. 
\end{align}
\end{lemma}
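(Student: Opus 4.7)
The plan is to combine a quadratic-growth property of the linear functional $\langle \bC, \cdot \rangle$ over $\mH$, which follows from the row-wise margin condition \eqref{eq:LP-cont-cond}, with the first-order optimality of $\bVp$ for $\bCp$.

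First I would identify $\bV$ explicitly. Let $\bU \in \mH$ denote the clustering matrix associated with the partition $\mI_1,\dots,\mI_K$, i.e., $u_{ik}=1$ if $i \in \mI_k$ and $0$ otherwise. By \eqref{eq:LP-cont-cond}, for every $i \in \mI_k$ the entry $c_{ik}$ exceeds every other $c_{i\ell}$ by at least $\delta$, so any $\bH \in \mH \setminus \{\bU\}$ must reassign at least one node away from its preferred community, which strictly decreases $\langle \bC, \bH \rangle$ by at least $\delta > 0$. Hence $\bU$ is the unique maximizer of $\langle \bC, \cdot \rangle$ over $\mH$. Since Proposition \ref{prop:LP} identifies $\mT(\bC)$ with the optimum set of the LP over $\mP$, and Lemma \ref{lem:form-H} tells us that the extreme points of $\mP$ are exactly $\mH$, any face of the LP optimum set contains an extreme point in $\mH$; uniqueness over $\mH$ therefore forces $\mT(\bC) = \{\bU\}$, so that $\bV = \bU$.

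Next I would establish the key quadratic-growth estimate
\begin{equation*}
\langle \bC,\, \bV - \bH \rangle \;\ge\; \tfrac{\delta}{2}\,\|\bV - \bH\|_F^2 \qquad \text{for all } \bH \in \mH.
\end{equation*}
Row $i$ either contributes zero to both sides (when $\bh_i = \bv_i$), or $\bv_i - \bh_i$ has exactly one $+1$ (at the coordinate $k$ with $i \in \mI_k$) and exactly one $-1$ (at some $\ell \neq k$). In the latter case, the row contribution to the left-hand side is $c_{ik} - c_{i\ell} \ge \delta$ while the row contribution to $\|\bV - \bH\|_F^2$ is exactly $2$. Summing over the $s$ disagreeing rows yields $\langle \bC, \bV - \bH \rangle \ge \delta s = \tfrac{\delta}{2}\|\bV - \bH\|_F^2$. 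To conclude, I would specialize to $\bH = \bVp$ and split
\begin{equation*}
\tfrac{\delta}{2}\,\|\bV - \bVp\|_F^2 \;\le\; \langle \bC, \bV - \bVp \rangle \;=\; \langle \bCp, \bV - \bVp \rangle + \langle \bC - \bCp, \bV - \bVp \rangle.
\end{equation*}
Since $\bVp$ maximizes $\langle \bCp, \cdot \rangle$ over $\mH$ by Proposition \ref{prop:LP}, the first inner product is nonpositive, and Cauchy--Schwarz bounds the second by $\|\bC - \bCp\|_F\,\|\bV - \bVp\|_F$. Dividing through by $\|\bV - \bVp\|_F$ (the case $\bV = \bVp$ being trivial) produces \eqref{rst:LP-cont}.

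The main obstacle is the quadratic-growth inequality: because $\mH$ is discrete, this is not a standard strong-convexity statement but rather a combinatorial error-bound that exploits the fact that every row on which two clustering matrices disagree contributes a squared-Frobenius cost of exactly $2$ together with an objective cost of at least $\delta$. Once this row-wise accounting is in place, the rest is routine optimization bookkeeping.
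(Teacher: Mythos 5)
Your proof is correct and arrives at the same constant $2/\delta$, but by a genuinely different route than the paper. The paper first invokes the KKT characterization of Lemma \ref{lem:closed-form-LP} to attach dual variables $\bw^\prime$ to $\bVp$, then counts the misassigned indices $s_k=|\mI_k\cap\mJ_k^c|$, and finally lower-bounds $\|\bC-\bCp\|_F^2$ by $\tfrac{1}{2}\delta^2\sum_k s_k$ via an auxiliary constrained quadratic minimization whose KKT system it solves explicitly. You instead prove a combinatorial quadratic-growth (error-bound) inequality $\langle \bC,\bV-\bH\rangle\ge\tfrac{\delta}{2}\|\bV-\bH\|_F^2$ over $\mH$ by row-wise accounting (each disagreeing row costs at least $\delta$ in objective and exactly $2$ in squared Frobenius norm), and combine it with the optimality of $\bVp$ for $\langle\bCp,\cdot\rangle$ and Cauchy--Schwarz --- the standard template for Lipschitz continuity of solution maps of strongly monotone problems. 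Your argument is shorter and needs much less machinery: it uses only that $\mT(\bC)$ maximizes $\langle\bC,\cdot\rangle$ over $\mH$ (immediate from $\|\bH\|_F^2=n$ on $\mH$) and that rows of elements of $\mH$ are standard basis vectors, entirely bypassing the dual variables of Lemma \ref{lem:closed-form-LP} and the auxiliary optimization over $(\bm{x},\bw)$. What the paper's longer computation buys is a more explicit structural picture of exactly which entries of $\bC-\bCp$ must be large, but for the stated bound your variational-inequality argument is cleaner. One minor remark: your detour through the LP over $\mP$ and its extreme points to conclude $\mT(\bC)=\{\bU\}$ is unnecessary, since $\mT$ in \eqref{project-H} is defined directly as an argmin over the discrete set $\mH$, so uniqueness of the maximizer of $\langle\bC,\cdot\rangle$ over $\mH$ already gives $\bV=\bU$.
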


Next, we show an inequality that is useful in establishing the contraction property of the projected power iterations. 
\begin{lemma}\label{lem:contra}
Let $\Delta =\bA - \E[\bA]$. Suppose that $\varepsilon \in (0,1/\sqrt{K})$ and $\bH \in \mH$ such that $\|\bH-\bHs\bQ\|_F \le \varepsilon\sqrt{n}$ for some $\bQ \in \Pi_K$. Then, it holds that
\begin{align*}
\|\bA(\bH-\bHs\bQ)\|_F \le \left(\frac{4\varepsilon n}{\sqrt{K}}(p-q) + \|\Delta\| \right) \|\bH-\bHs\bQ\|_F.
\end{align*}  
\end{lemma}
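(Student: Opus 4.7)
The plan is to split $\bA$ into its mean and fluctuation, $\bA = \E[\bA]+\Delta$, and then handle the two resulting terms separately:
\[
\bA(\bH-\bHs\bQ) = \E[\bA](\bH-\bHs\bQ) + \Delta(\bH-\bHs\bQ).
\]
The fluctuation term is immediately controlled by the operator norm: $\|\Delta(\bH-\bHs\bQ)\|_F \le \|\Delta\|\,\|\bH-\bHs\bQ\|_F$, which accounts for the $\|\Delta\|$-summand in the target inequality. All of the work therefore goes into the mean term, for which I would first write $\E[\bA]=(p-q)\bHs{\bHs}^{T}+q\bE_n$ (which follows directly from the definition of the symmetric SBM), and then observe that $\bE_n(\bH-\bHs\bQ)=0$ because both $\bH$ and $\bHs\bQ$ are balanced clustering matrices with common column-sum $m$. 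Hence
\[
\E[\bA](\bH-\bHs\bQ) = (p-q)\,\bHs{\bHs}^{T}(\bH-\bHs\bQ),
\]
and the problem reduces to bounding $\|\bHs{\bHs}^{T}\bX\|_F$ where $\bX := \bH-\bHs\bQ$.

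The key structural observation — and the main obstacle — is that a crude bound $\|\bHs{\bHs}^{T}\bX\|_F \le \|\bHs{\bHs}^{T}\|\cdot\|\bX\|_F = m\|\bX\|_F$ only produces a coefficient of order $n/K$ in front of $(p-q)$, which is larger than what the lemma claims whenever $\varepsilon$ is small. One must exploit the discrete structure of $\bX$: since $\bH$ and $\bHs\bQ$ are both in $\mH$, each row $\bx_i$ of $\bX$ is either the zero row or of the form $e_a-e_b$ with $a\neq b$. Consequently, writing $s$ for the number of nonzero rows of $\bX$, we have $\|\bX\|_F^2 = 2s$.

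Because $\bHs^T\bHs=m\bI_K$ and the rows of $\bHs$ are standard basis vectors, one checks directly that $\|\bHs{\bHs}^{T}\bX\|_F^2 = m\,\|{\bHs}^{T}\bX\|_F^2$. For the inner factor, let $s_k$ be the number of misclassified nodes lying in the $k$-th true community $\mI_k^*$, so that $\sum_k s_k = s$. The $(k,\ell)$-entry of ${\bHs}^{T}\bX$ is $\sum_{i\in\mI_k^*} x_{i\ell}$, a sum with at most $s_k$ nonzero summands each in $\{-1,0,+1\}$. Cauchy--Schwarz row-by-row then gives
\[
\|{\bHs}^{T}\bX\|_F^2 \;=\; \sum_{k,\ell}\Bigl(\sum_{i\in\mI_k^*} x_{i\ell}\Bigr)^{2} \;\le\; \sum_k s_k \sum_{i\in\mI_k^*}\|\bx_i\|^2 \;=\; 2\sum_k s_k^2 \;\le\; 2s^2.
\]
Putting the pieces together, $\|\bHs{\bHs}^{T}\bX\|_F \le \sqrt{m}\cdot\sqrt{2}\,s = \sqrt{n/(2K)}\,\|\bX\|_F^2$, and invoking the hypothesis $\|\bX\|_F\le \varepsilon\sqrt{n}$ yields
\[
(p-q)\|\bHs{\bHs}^{T}\bX\|_F \;\le\; (p-q)\,\frac{\varepsilon n}{\sqrt{2K}}\,\|\bX\|_F \;\le\; (p-q)\,\frac{4\varepsilon n}{\sqrt{K}}\,\|\bX\|_F,
\]
which combined with the $\|\Delta\|$ bound establishes the claim. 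The minor caveat that $\E[\bA]$ may differ from $(p-q)\bHs{\bHs}^{T}+q\bE_n$ by a bounded diagonal correction (depending on whether self-loops are included in the model) is absorbed without trouble, since such a correction has spectral norm at most $p = O(\log n/n)$ and can be bundled into the $\|\Delta\|$-term after enlarging $\Delta$ by $O(\log n/n)\cdot\bI$.
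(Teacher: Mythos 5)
Your proposal is correct, and it reaches the key estimate by a genuinely different route than the paper. Both proofs begin identically: split $\bA=\E[\bA]+\Delta$, bound the fluctuation term by $\|\Delta\|\,\|\bH-\bHs\bQ\|_F$, and use the balancedness $\bo_n^T(\bH-\bHs\bQ)=\b0$ to annihilate the all-ones component of $\E[\bA]$ (your decomposition $(p-q)\bHs{\bHs}^T+q\bE_n$ is the same matrix as the paper's $\frac{p+(K-1)q}{K}\bE_n+(p-q)\bU\bU^T\otimes\bE_m$). The divergence is in how the remaining rank-$K$ term is controlled. The paper writes $\bH=\bHs\bQ\bZ+\bG$ with $\bG^T\bHs=\b0$, reduces the problem to showing $m\|\bI-\bZ\|_F\le 4\varepsilon\sqrt{n}\,\|\bH-\bHs\bQ\|_F$, and proves this by an entrywise analysis of the confusion matrix $\bZ$; that analysis is where the hypothesis $\varepsilon<1/\sqrt{K}$ is consumed (it forces $z_{\ell\ell}\ge 1/2$, which is needed for the step $\sum_k z_{kk}(1-z_{kk})\ge\frac{1}{2}\sum_k(1-z_{kk})$). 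You instead exploit directly that every row of $\bX=\bH-\bHs\bQ$ is either zero or of the form $\be_a-\be_b$, so that $\|\bX\|_F^2=2s$ with $s$ the number of misclassified vertices, and a row-by-row Cauchy--Schwarz on ${\bHs}^T\bX$ gives $\|\bHs{\bHs}^T\bX\|_F\le\sqrt{n/(2K)}\,\|\bX\|_F^2$. This is more elementary, yields the sharper constant $1/\sqrt{2}$ in place of the paper's $4$, and notably never uses the assumption $\varepsilon<1/\sqrt{K}$ --- so your argument proves a slightly stronger statement. The caveat about a diagonal self-loop correction is unnecessary here (the model in Definition~\ref{SBM} draws $a_{ii}\sim\mathbf{Bern}(p)$, so $\E[\bA]=(p-q)\bHs{\bHs}^T+q\bE_n$ holds exactly), but including it does no harm.
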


Then, we present some probabilistic results that will be used for establishing the contraction property of the projected power iterations. 

\begin{lemma}\label{lem:spectral-norm-Delta}
Let $\Delta =\bA - \E[\bA]$.  There exists a constant $c_1>0$, whose value only depends on $\alpha$ and $\beta$, such that 
\begin{align}\label{eq:spectral-norm-Delta}
\|\Delta\| \le c_1\sqrt{\log n}
\end{align}
holds with probability at least $1-n^{-3}$. 
\end{lemma}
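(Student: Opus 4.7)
The plan is to prove this by invoking well-known concentration inequalities for the spectral norm of symmetric matrices with independent entries. Observe that the matrix $\Delta = \bA - \E[\bA]$ is symmetric and, for $1 \le i \le j \le n$, its entries $\Delta_{ij}$ are independent, zero-mean, bounded in absolute value by $1$, and each has variance at most $\max\{p, q\} = (\max\{\alpha,\beta\}) \log n /n$. The claim is the standard spectral-norm bound for centered adjacency matrices of SBM graphs in the logarithmic degree regime, which is exactly the regime where the answer is $\Theta(\sqrt{\log n})$.

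First I would compute the row-variance parameter
\[
\sigma^2 \;\triangleq\; \max_{i \in [n]} \sum_{j \in [n]} \E[\Delta_{ij}^2] \;\le\; n \cdot \max\{p,q\} \;\le\; \max\{\alpha,\beta\}\,\log n.
\]
Then I would invoke a sharp spectral-norm bound for symmetric random matrices with bounded independent entries — for example, the result of Bandeira and van Handel — which gives
\[
\E\|\Delta\| \;\le\; C_1 \sigma \,+\, C_2 \sqrt{\log n} \;=\; O\!\left(\sqrt{\log n}\right).
\]
One could alternatively cite the bounds of Feige--Ofek or Lei--Rinaldo, which are specifically tailored for the adjacency matrix of an SBM in the logarithmic-degree regime and produce the same order.

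Second, I would upgrade this expectation bound to a high-probability tail estimate. Since $\|\Delta\|$ is a $1$-Lipschitz, convex function of the independent bounded random variables $\{\Delta_{ij}\}_{1 \le i \le j \le n}$, I would apply either the bounded-differences inequality (McDiarmid) or Talagrand's convex-Lipschitz concentration inequality to obtain sub-Gaussian concentration of $\|\Delta\|$ around its mean. Choosing the deviation parameter of order $\sqrt{\log n}$ makes the failure probability at most $n^{-3}$, and the extra $O(\sqrt{\log n})$ tail term is absorbed into the final constant $c_1$, whose value depends only on $\alpha$ and $\beta$ through $\sigma$.

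The only non-trivial step is the expectation bound: a naive application of matrix Bernstein would yield the suboptimal $O(\log n)$, because it ignores that only the \emph{row} variance matters here and that the entrywise magnitude is bounded. Getting the sharp order $\sqrt{\log n}$ really does require a Seginer- or Bandeira--van Handel-style bound (or the Feige--Ofek combinatorial argument with high-degree vertex removal). Once that is in hand, the concentration step is entirely routine.
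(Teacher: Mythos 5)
Your proposal is correct, but it takes a genuinely different (and much more self-contained) route than the paper: the paper's entire proof of this lemma is the citation of Theorem 5.2 of Lei and Rinaldo (2015), which in the regime $n\max\{p,q\}=\max\{\alpha,\beta\}\log n\gtrsim\log n$ directly yields $\|\bA-\E[\bA]\|\le C\sqrt{n\max\{p,q\}}=c_1\sqrt{\log n}$ with probability $1-n^{-3}$ (you mention this citation yourself as an alternative). Your primary route --- Bandeira--van Handel for $\E\|\Delta\|\lesssim\sigma+\sigma_*\sqrt{\log n}=O(\sqrt{\log n})$, followed by concentration of $\|\Delta\|$ around its mean --- is a legitimate substitute and has the pedagogical advantage of making explicit why the naive matrix Bernstein bound loses a $\sqrt{\log n}$ factor; what the paper's citation buys is brevity and a self-contained high-probability statement in one step (Lei--Rinaldo's proof is a Feige--Ofek-style discretization argument that delivers the tail bound directly, with no separate concentration step). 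One caveat on your second step: the two tools you offer are not interchangeable. McDiarmid's bounded-differences inequality does \emph{not} work here --- changing a single entry of $\Delta$ can move $\|\Delta\|$ by a constant, so the bounded-differences variance proxy is of order $n^2$ and at failure probability $n^{-3}$ it only controls deviations of order $n\sqrt{\log n}$, which is useless. You must use the convex-Lipschitz version (Talagrand), exploiting that $\|\Delta\|$ is a convex, $\sqrt{2}$-Lipschitz function of the $\binom{n}{2}$ bounded upper-triangular entries with respect to the Euclidean norm; that gives sub-Gaussian concentration with an absolute constant, and the resulting $O(\sqrt{\log n})$ deviation (plus the $O(1)$ median-versus-mean discrepancy) is absorbed into $c_1$ as you say. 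With Talagrand in place of McDiarmid, the argument is complete.
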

This lemma provides a spectral bound on the deviation of $\bA$ from its mean. It is a direct consequence of \citet[Theorem 5.2]{lei2015consistency} and thus we omit its proof. 

\begin{lemma}\label{lem:tail-Bino}
Let $m=n/K$ and $\alpha > \beta >0$ be constants. Suppose that $\{W_i\}_{i=1}^m$ are i.i.d.~$\mathbf{Bern}(\alpha\log n/n)$ and $\{Z_i\}_{i=1}^m$ are i.i.d.~$\mathbf{Bern}(\beta\log n/n)$ that is independent of $\{W_i\}_{i=1}^m$. Then, for any $\gamma \in \R$, it holds that 
\begin{align*}
 \P\left( \sum_{i=1}^m W_i - \sum_{i=1}^m Z_i \le \gamma \log n \right) \le n^{-\frac{(\sqrt{\alpha}-\sqrt{\beta})^2}{K}+\frac{\gamma \log(\alpha/\beta)}{2}}.
\end{align*}
\end{lemma}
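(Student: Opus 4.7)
The plan is to apply a standard Chernoff-style exponential tail bound to the random variable $S = \sum_{i=1}^m W_i - \sum_{i=1}^m Z_i$, optimized at a carefully chosen value of the dual parameter. Concretely, for any $t > 0$, Markov's inequality applied to $e^{-tS}$ gives
\begin{align*}
\P(S \le \gamma \log n) = \P(e^{-tS} \ge e^{-t\gamma \log n}) \le e^{t\gamma \log n}\,\E[e^{-tS}].
\end{align*}
By the independence between the $W_i$'s and $Z_i$'s and across indices, the moment generating function factorizes as $\E[e^{-tS}] = (1 - p + pe^{-t})^m (1 - q + q e^t)^m$ with $p = \alpha\log n/n$ and $q = \beta\log n/n$.

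Next, I would apply the elementary inequality $1 + x \le e^x$ to each factor and use $mp = (\alpha/K)\log n$, $mq = (\beta/K)\log n$ to obtain
\begin{align*}
\E[e^{-tS}] \le \exp\!\left( \tfrac{\alpha \log n}{K}(e^{-t}-1) + \tfrac{\beta \log n}{K}(e^{t}-1) \right).
\end{align*}
Substituting back gives an upper bound of the form $n^{\,t\gamma + \frac{\alpha}{K}(e^{-t}-1) + \frac{\beta}{K}(e^t-1)}$, which is valid for every $t > 0$.

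The decisive step is the choice of $t$. I would pick $t^\star = \tfrac{1}{2}\log(\alpha/\beta)$, which is positive since $\alpha > \beta$. With this choice $e^{t^\star} = \sqrt{\alpha/\beta}$ and $e^{-t^\star} = \sqrt{\beta/\alpha}$, so a direct computation yields
\begin{align*}
\tfrac{\alpha}{K}(e^{-t^\star}-1) + \tfrac{\beta}{K}(e^{t^\star}-1) = \tfrac{2\sqrt{\alpha\beta} - \alpha - \beta}{K} = -\tfrac{(\sqrt{\alpha}-\sqrt{\beta})^2}{K},
\end{align*}
while $t^\star \gamma = \tfrac{\gamma \log(\alpha/\beta)}{2}$. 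Plugging in produces exactly the claimed bound $n^{-(\sqrt{\alpha}-\sqrt{\beta})^2/K + \gamma\log(\alpha/\beta)/2}$.

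I do not anticipate any real obstacle: the entire argument is a one-page Chernoff bound, and the main ``trick'' is recognizing that $t^\star = \tfrac{1}{2}\log(\alpha/\beta)$ is the natural choice (it is the standard tilt that makes the Poisson-like rate function collapse to the Hellinger-type quantity $(\sqrt{\alpha}-\sqrt{\beta})^2/K$ familiar from the information-theoretic threshold in the SBM). One minor point of care is the case $\gamma \ge 0$, where optimization may prefer a different $t$, but since the lemma only asks for an \emph{upper} bound valid for all $\gamma \in \R$, the suboptimal but explicit choice $t = t^\star$ suffices.
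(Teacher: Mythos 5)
Your proof is correct: the Chernoff bound with the tilt $t^\star=\tfrac12\log(\alpha/\beta)$, the factorization of the moment generating function, and the inequality $1+x\le e^x$ combine exactly as you compute to give the stated exponent $-\tfrac{(\sqrt{\alpha}-\sqrt{\beta})^2}{K}+\tfrac{\gamma\log(\alpha/\beta)}{2}$. The paper itself omits the proof and defers to \citet[Lemma 8]{abbe2020entrywise}, whose argument is precisely this standard exponential tilting, so your approach matches the intended one.
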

This lemma is proved in \citet[Lemma 8]{abbe2020entrywise}. Based on the this lemma, we can show that the entries of $\bA\bHs$ satisfy the requirement of \eqref{eq:LP-cont-cond} in Lemma \ref{lem:LP-cont} with high probability.  
\begin{lemma}\label{lem:block-gap}
Suppose that $\alpha > \beta > 0$ and $\bC=\bA\bHs$. Let $\mI_k=\{i \in [n]: h_{ik}^* =1\}$ for all $k\in [K]$. If $\sqrt{\alpha}-\sqrt{\beta} > \sqrt{K}$, there exists a constant $\gamma> 0$, whose value depends only on $\alpha$, $\beta$, and $K$, such that for all $i \in \mI_k$ and $1\le k \neq \ell \le K$,
\begin{align}\label{rst-1:lem-block-gap}
c_{ik} - c_{i\ell} \ge \gamma\log n
\end{align}
holds with probability at least $1-n^{-\Omega(1)}$. 
\end{lemma}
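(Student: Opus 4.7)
The plan is to reduce the estimate directly to Lemma \ref{lem:tail-Bino} by unpacking the product $\bC=\bA\bHs$ entrywise. For $i\in\mI_k$, one has $c_{ik}=\sum_{j\in\mI_k,\,j\ne i}a_{ij}$, a sum of $m-1$ independent $\mathbf{Bern}(p)$ variables, while for $\ell\ne k$ the quantity $c_{i\ell}=\sum_{j\in\mI_\ell}a_{ij}$ is an independent sum of $m$ independent $\mathbf{Bern}(q)$ variables. So $c_{ik}-c_{i\ell}$ has almost exactly the form handled by Lemma \ref{lem:tail-Bino}, with the only mismatch being that the $\mathbf{Bern}(p)$ sum has $m-1$ rather than $m$ terms.

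To absorb this mismatch, I would introduce a fresh independent $W'\sim\mathbf{Bern}(p)$ and use the inclusion
$$\{c_{ik}-c_{i\ell}\le\gamma\log n\}\subseteq\{(c_{ik}+W')-c_{i\ell}\le\gamma\log n+1\},$$
which is immediate because $W'\in\{0,1\}$. The left-hand side of the latter event is now a sum of $m$ independent $\mathbf{Bern}(p)$ variables minus an independent sum of $m$ independent $\mathbf{Bern}(q)$ variables, so Lemma \ref{lem:tail-Bino} applies with threshold $\gamma\log n+1=(\gamma+1/\log n)\log n$, yielding
$$\P(c_{ik}-c_{i\ell}\le\gamma\log n)\;\le\;n^{-\frac{(\sqrt\alpha-\sqrt\beta)^2}{K}+\frac{(\gamma+1/\log n)\log(\alpha/\beta)}{2}}.$$

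Next I would choose $\gamma>0$ so that this exponent lies below $-1-\delta$ for some fixed $\delta>0$ uniformly in large $n$. The assumption $\sqrt\alpha-\sqrt\beta>\sqrt K$ gives $(\sqrt\alpha-\sqrt\beta)^2/K>1$, and $\alpha>\beta$ forces $\log(\alpha/\beta)>0$, so any $\gamma$ with
$$\gamma\;<\;\frac{2}{\log(\alpha/\beta)}\left(\frac{(\sqrt\alpha-\sqrt\beta)^2}{K}-1\right)$$
works once $n$ is large enough to make the $1/\log n$ correction harmless. Depending only on $\alpha,\beta,K$, this $\gamma$ is the constant claimed. A union bound over the at most $n(K-1)$ pairs $(i,\ell)$ with $i\in[n]$ and $\ell\ne k(i)$ converts the single-pair bound $n^{-(1+\delta)}$ into an overall failure probability of $n^{-\delta}=n^{-\Omega(1)}$, finishing the proof.

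The main delicate point is the calibration of $\gamma$: it must be simultaneously strictly positive and small enough to overcome the Binomial tail exponent, and the quantitative slack for doing so is exactly the strict information-theoretic gap $(\sqrt\alpha-\sqrt\beta)^2-K>0$. Everything else is a mechanical decomposition of $\bA\bHs$ into independent Binomial sums, a harmless $\pm 1$ adjustment to align with the hypothesis of Lemma \ref{lem:tail-Bino}, and a union bound.
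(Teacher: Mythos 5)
Your proposal is correct and follows essentially the same route as the paper: reduce $c_{ik}-c_{i\ell}$ to a difference of independent Binomial sums, invoke Lemma \ref{lem:tail-Bino}, pick $\gamma>0$ using the strict gap $(\sqrt{\alpha}-\sqrt{\beta})^2/K>1$ so the exponent drops below $-1$, and finish with a union bound over the $O(nK)$ pairs. The only difference is your $\pm 1$ padding to handle an $(m-1)$-term sum, which is unnecessary under the paper's Definition \ref{SBM} (where $a_{ii}\sim\mathbf{Bern}(p)$ is generated, so $c_{ik}$ already has exactly $m$ terms) but is harmless and correct.
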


Armed with the above results, we are now ready to show that the projected power iteration possesses a contraction property in a certain neighborhood of $\bHs\bQ$ for some $\bQ \in \Pi_K$. 

\begin{prop}\label{prop:contra-PGM}
Suppose that the constants $\alpha, \beta > 0$ satisfy $\sqrt{\alpha}-\sqrt{\beta} > \sqrt{K}$ and $n > \exp(16c_1^2/\gamma^2)$. Then, the following event happens with probability at least $1-n^{-\Omega(1)}$: For all $\bH \in \mH$ and  $\varepsilon \in \left(0,\min\left\{\frac{1}{\sqrt{K}},\frac{\gamma\sqrt{K}}{16(\alpha-\beta)}\right\}\right)$  such that $\|\bH-\bHs\bQ\|_F \le \varepsilon \sqrt{n}$ for some $\bQ \in \Pi_K$, it holds that
\begin{align}\label{rst:prop-PGM}
\|\bV - \bHs\bQ\|_F \le \kappa \|\bH-\bHs\bQ\|_F
\end{align} 
for any $\bV \in \mT(\bA\bH)$, where 
\begin{align}\label{conv-rate}
& \kappa = 4\max\left\{ \frac{4\varepsilon (\alpha-\beta)}{\gamma\sqrt{K}}, \frac{c_1}{\gamma\sqrt{\log n}} \right\} \in (0,1)
\end{align}
and $c_1, \gamma$ are the constants in Lemmas \ref{lem:spectral-norm-Delta} and \ref{lem:block-gap}, respectively. 
\end{prop}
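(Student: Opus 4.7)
The plan is to derive the contraction \eqref{rst:prop-PGM} in three steps: first show that the ground truth $\bHs\bQ$ is a projection of $\bA\bHs\bQ$, then use the Lipschitz-like estimate for $\mT$ from Lemma \ref{lem:LP-cont} to transfer this anchor to an estimate of $\bV - \bHs\bQ$ by $\bA(\bH-\bHs\bQ)$, and finally bound the latter using Lemma \ref{lem:contra} together with the spectral concentration bound in Lemma \ref{lem:spectral-norm-Delta}.

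For step one, let $\mI_k=\{i\in[n]:(\bHs\bQ)_{ik}=1\}$ for $k\in[K]$, which is just a permutation of the true communities encoded by $\bHs$. Lemma \ref{lem:block-gap} applied to $\bA\bHs$ and transported through the permutation $\bQ$ yields that, with probability at least $1-n^{-\Omega(1)}$,
\begin{align*}
(\bA\bHs\bQ)_{ik} - (\bA\bHs\bQ)_{i\ell} \ge \gamma\log n > 0
\end{align*}
for all $i\in\mI_k$ and $k\neq\ell$. Invoking Lemma \ref{lem:closed-form-LP} with $\bw = \b0$ then certifies $\bHs\bQ \in \mT(\bA\bHs\bQ)$, and simultaneously certifies that $\bC := \bA\bHs\bQ$ satisfies the hypothesis \eqref{eq:LP-cont-cond} of Lemma \ref{lem:LP-cont} with $\delta=\gamma\log n$.

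For step two, set $\bCp := \bA\bH$. Lemma \ref{lem:LP-cont} applied with the choices $\bVp=\bHs\bQ \in \mT(\bC)$ and $\bV\in\mT(\bCp)$ yields
\begin{align*}
\|\bV - \bHs\bQ\|_F \le \frac{2\,\|\bA(\bH - \bHs\bQ)\|_F}{\gamma\log n}.
\end{align*}
For step three, Lemma \ref{lem:contra} (applicable because the hypothesis $\varepsilon<1/\sqrt{K}$ is in force) upper-bounds the numerator by $\big(\tfrac{4\varepsilon n}{\sqrt{K}}(p-q)+\|\Delta\|\big)\|\bH-\bHs\bQ\|_F$. Substituting $p-q=(\alpha-\beta)\log n/n$ and using Lemma \ref{lem:spectral-norm-Delta} to replace $\|\Delta\|$ by $c_1\sqrt{\log n}$ (on an event of probability at least $1-n^{-3}$) gives
\begin{align*}
\|\bV - \bHs\bQ\|_F \le \left(\frac{8\varepsilon(\alpha-\beta)}{\gamma\sqrt{K}} + \frac{2c_1}{\gamma\sqrt{\log n}}\right)\|\bH-\bHs\bQ\|_F,
\end{align*}
and the elementary inequality $a+b \le 2\max\{a,b\}$ folds the right-hand factor into the stated $\kappa$. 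The condition $\varepsilon < \gamma\sqrt{K}/(16(\alpha-\beta))$ forces the first term in $\kappa$ to be strictly less than $1$, while $n>\exp(16c_1^2/\gamma^2)$ does the same for the second term, so $\kappa\in(0,1)$ as claimed.

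The one slightly delicate point is that the high-probability gap in step one and the spectral bound in step three must hold simultaneously on a single event, and the resulting bound must be valid for \emph{every} $\bH\in\mH$ and every accompanying $\bQ\in\Pi_K$ satisfying the hypothesis, i.e.\ the event must not depend on the iterate. Since the gap bound of Lemma \ref{lem:block-gap} is a statement about rows of $\bA\bHs$ that only involves $\bHs$ (not $\bH$), and since $\Pi_K$ has constant cardinality $K!$, a single union bound absorbs both contingencies without affecting the $1-n^{-\Omega(1)}$ probability. Everything else reduces to the arithmetic already sketched above.
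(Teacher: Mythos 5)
Your proposal is correct and follows essentially the same route as the paper's proof: anchor the ground truth as a projection via Lemma \ref{lem:block-gap} and Lemma \ref{lem:closed-form-LP}, apply the Lipschitz-like bound of Lemma \ref{lem:LP-cont} with $\delta=\gamma\log n$, and close with Lemma \ref{lem:contra} and Lemma \ref{lem:spectral-norm-Delta} plus $a+b\le 2\max\{a,b\}$. The only cosmetic difference is that you permute the anchor to $\bHs\bQ\in\mT(\bA\bHs\bQ)$ while the paper un-permutes the iterate to $\bV\bQ^T\in\mT(\bA\bH\bQ^T)$ via Lemma \ref{lem:LP-Q}; these are equivalent by the permutation invariance of the Frobenius norm.
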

Observe that the contraction rate $\kappa$ is decreasing to a quantity on the order of $1/\sqrt{\log n}$ as the iterates approach a ground truth. This implies that the better the initialization, the less iterations the proposed method requires to find a ground truth. 

The following lemma indicates that the projected power iterations exhibit one-step convergence to a ground truth. This would imply the finite termination of the proposed algorithm. 

\begin{lemma}\label{lem:one-step-conv}
Suppose that the constants $\alpha > \beta > 0$ satisfy $\sqrt{\alpha}-\sqrt{\beta} > \sqrt{K}$. Then, the following statement holds with probability at least $1-n^{-\Omega(1)}$: For all $\bH \in \mH$ such that $\|\bH-\bHs\bQ\|_F < \sqrt{\gamma\log n}$ for some $\bQ \in \Pi_K$, it holds that
\begin{align}\label{rst:lem-one-step-conv}
\mT(\bA\bH) = \{\bHs\bQ\},
\end{align}
where $\gamma > 0$ is the constant in Lemma \ref{lem:block-gap}. 
\end{lemma}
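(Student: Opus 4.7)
The plan is to show that with probability $1 - n^{-\Omega(1)}$, the target $\bHs\bQ$ strictly dominates every other balanced clustering matrix under the linear objective $\langle \bA\bH, \cdot \rangle$. By Proposition \ref{prop:LP} together with Lemma \ref{lem:form-H}, $\mT(\bA\bH)$ is precisely the argmax of this objective over $\mH$, so such strict domination forces $\mT(\bA\bH) = \{\bHs\bQ\}$.

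Write $\tilde{\bH} = \bHs\bQ$ for brevity, and let $t$ be the number of rows on which $\bH$ and $\tilde{\bH}$ disagree, so that $\|\bH - \tilde{\bH}\|_F^2 = 2t$ and the hypothesis becomes $2t < \gamma \log n$. Let $\pi^*(i)$ denote the column label of row $i$ in $\tilde{\bH}$, and for an arbitrary $\bH^\prime \in \mH$ let $\pi^\prime(i)$ be its row labeling and $M = \{i : \pi^\prime(i) \ne \pi^*(i)\}$. I would start from the identity
\begin{align*}
\langle \bA\bH, \tilde{\bH} - \bH^\prime \rangle = \sum_{i \in M}\bigl[(\bA\bH)_{i,\pi^*(i)} - (\bA\bH)_{i, \pi^\prime(i)}\bigr]
\end{align*}
and split each summand via $\bA\bH = \bA\tilde{\bH} + \bA(\bH - \tilde{\bH})$.

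For the signal contribution $(\bA\tilde{\bH})_{i, \pi^*(i)} - (\bA\tilde{\bH})_{i, \pi^\prime(i)}$ on $i \in M$, Lemma \ref{lem:block-gap} applied to the (permuted) ground truth $\tilde{\bH}$ yields a lower bound of $\gamma \log n$ on an event of probability $1 - n^{-\Omega(1)}$. For the noise contribution, using $a_{ij}\in\{0,1\}$ I would estimate entry-wise
\begin{align*}
\bigl|(\bA(\bH - \tilde{\bH}))_{ik}\bigr| \le \sum_j a_{ij}\bigl|h_{jk} - \tilde{h}_{jk}\bigr| \le \bigl|\{j : h_{jk}\ne \tilde{h}_{jk}\}\bigr| \le t,
\end{align*}
since each disagreement row increments at most one term in the column-$k$ difference set. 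Hence the noise budget for each summand is at most $2t$, and each summand is bounded below by $\gamma \log n - 2t > 0$. If $\bH^\prime \ne \tilde{\bH}$ then $M$ is nonempty and $\langle \bA\bH, \tilde{\bH} - \bH^\prime \rangle > 0$, ruling out $\bH^\prime$ as a maximizer.

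The main obstacle I expect is the tight per-entry bound $|(\bA(\bH-\tilde{\bH}))_{ik}| \le t$, which hinges on $\bA$ being $\{0,1\}$-valued and on the observation that each misclassified row increments at most one term in any single column's symmetric difference. Once that is in place the rest is bookkeeping: the $\gamma \log n$ gap from Lemma \ref{lem:block-gap} exactly absorbs the $2t = \|\bH-\tilde{\bH}\|_F^2$ perturbation, which is precisely why $\sqrt{\gamma \log n}$ is the correct radius for one-step convergence in the statement.
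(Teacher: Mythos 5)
Your proposal is correct and follows essentially the same route as the paper: both work on the event of Lemma \ref{lem:block-gap}, split $\bA\bH$ into the signal $\bA\bHs\bQ$ plus the perturbation $\bA(\bH-\bHs\bQ)$, and use $a_{ij}\in\{0,1\}$ to bound the perturbation of each row-wise gap by the number of misclassified rows, which is less than $\gamma\log n$ by hypothesis. The only (cosmetic) difference is the endgame: the paper feeds the resulting strict row-wise inequalities $c_{ik}-c_{i\ell}>0$ into the LP optimality characterization of Lemma \ref{lem:closed-form-LP}, whereas you sum them to get strict domination of the objective over every competitor in $\mH$ — both are valid.
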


\begin{figure*}[!htbp]
	\begin{minipage}[b]{0.245\linewidth}
		\centering
		\centerline{\includegraphics[width=\linewidth]{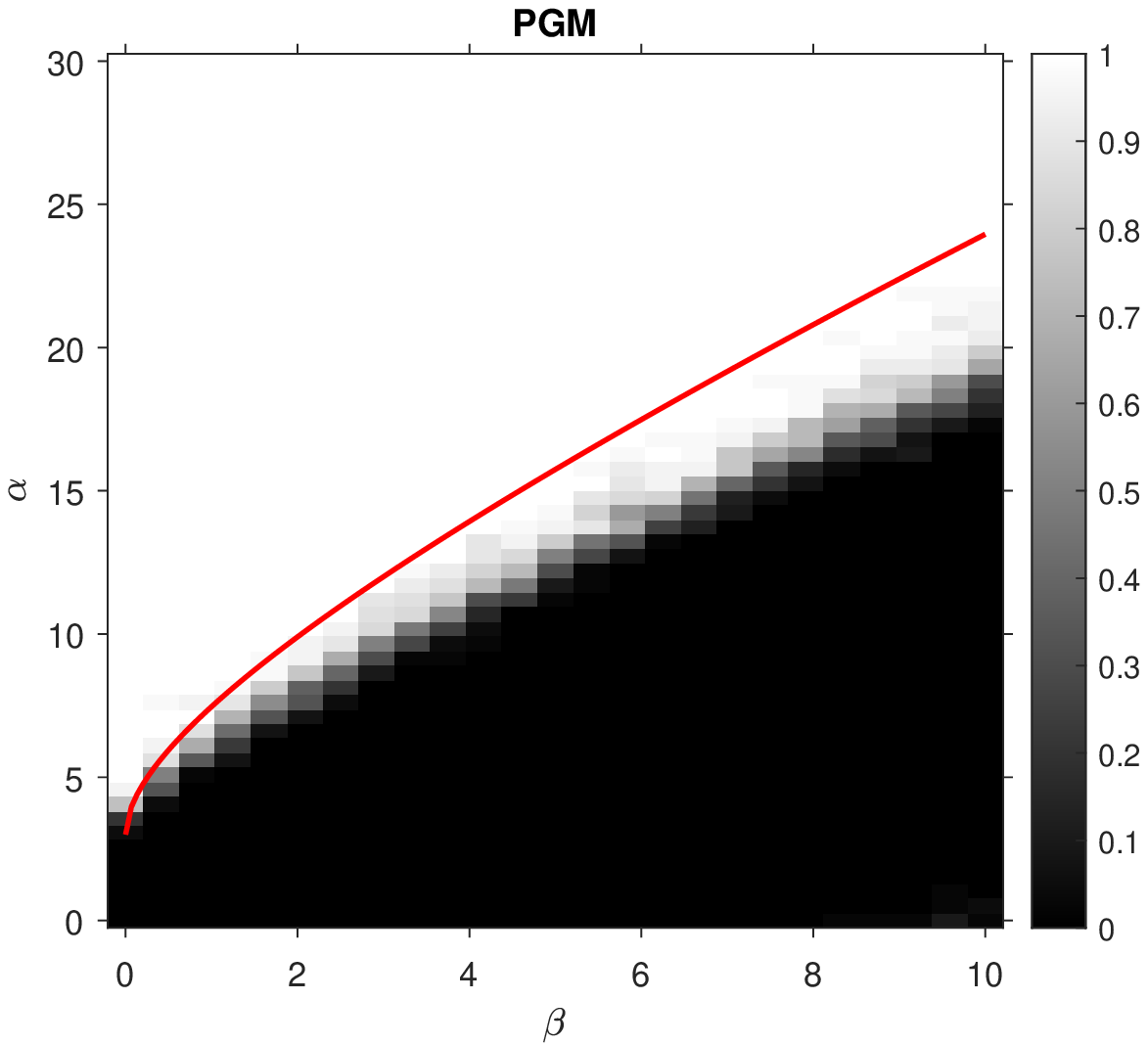}}
		\centerline{(a) PPM}\medskip
	\end{minipage}
	\begin{minipage}[b]{0.245\linewidth}
		\centering
		\centerline{\includegraphics[width=\linewidth]{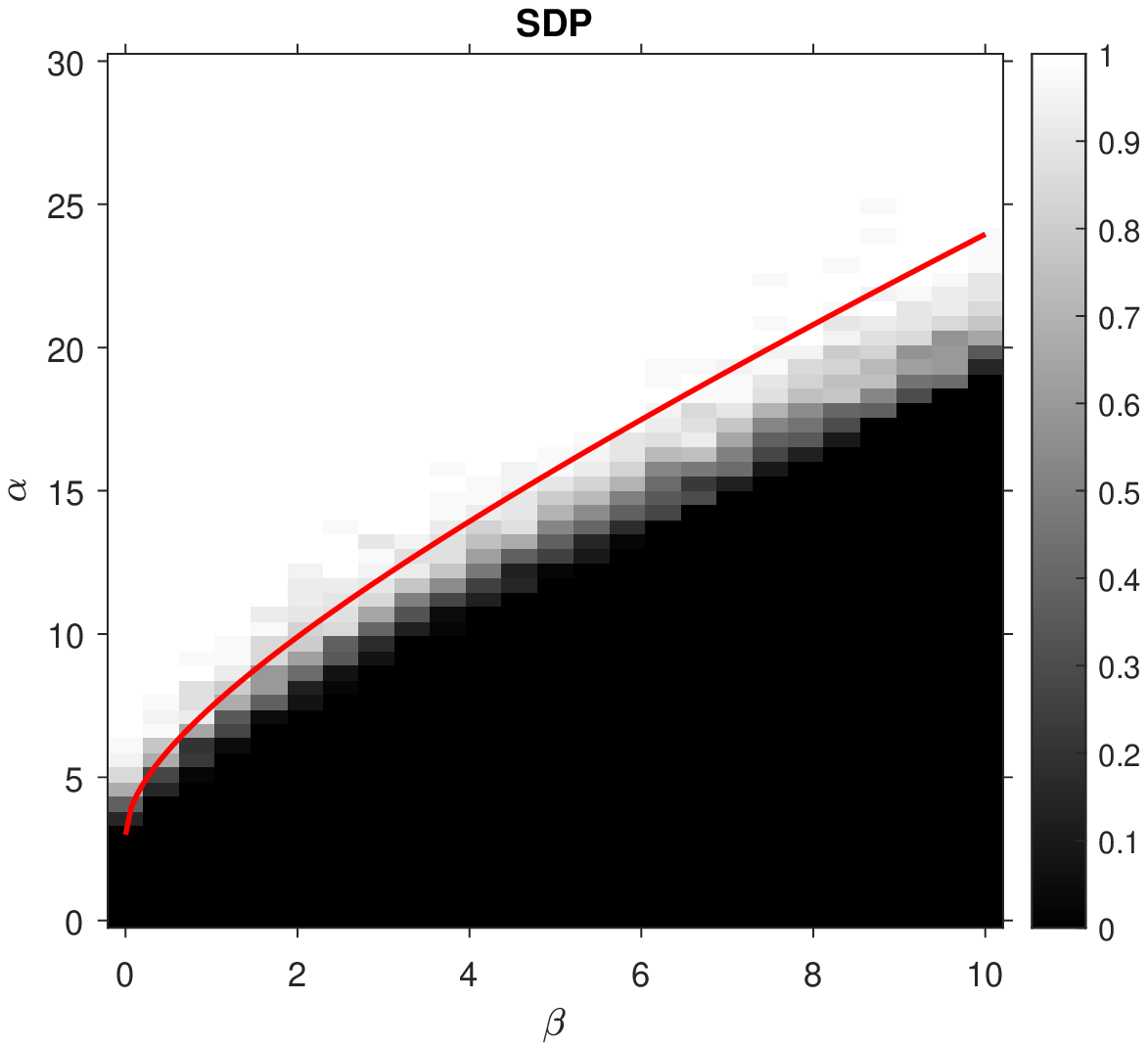}}
		\centerline{(b) SDP}\medskip
	\end{minipage}
	\begin{minipage}[b]{0.245\linewidth}
		\centering
		\centerline{\includegraphics[width=\linewidth]{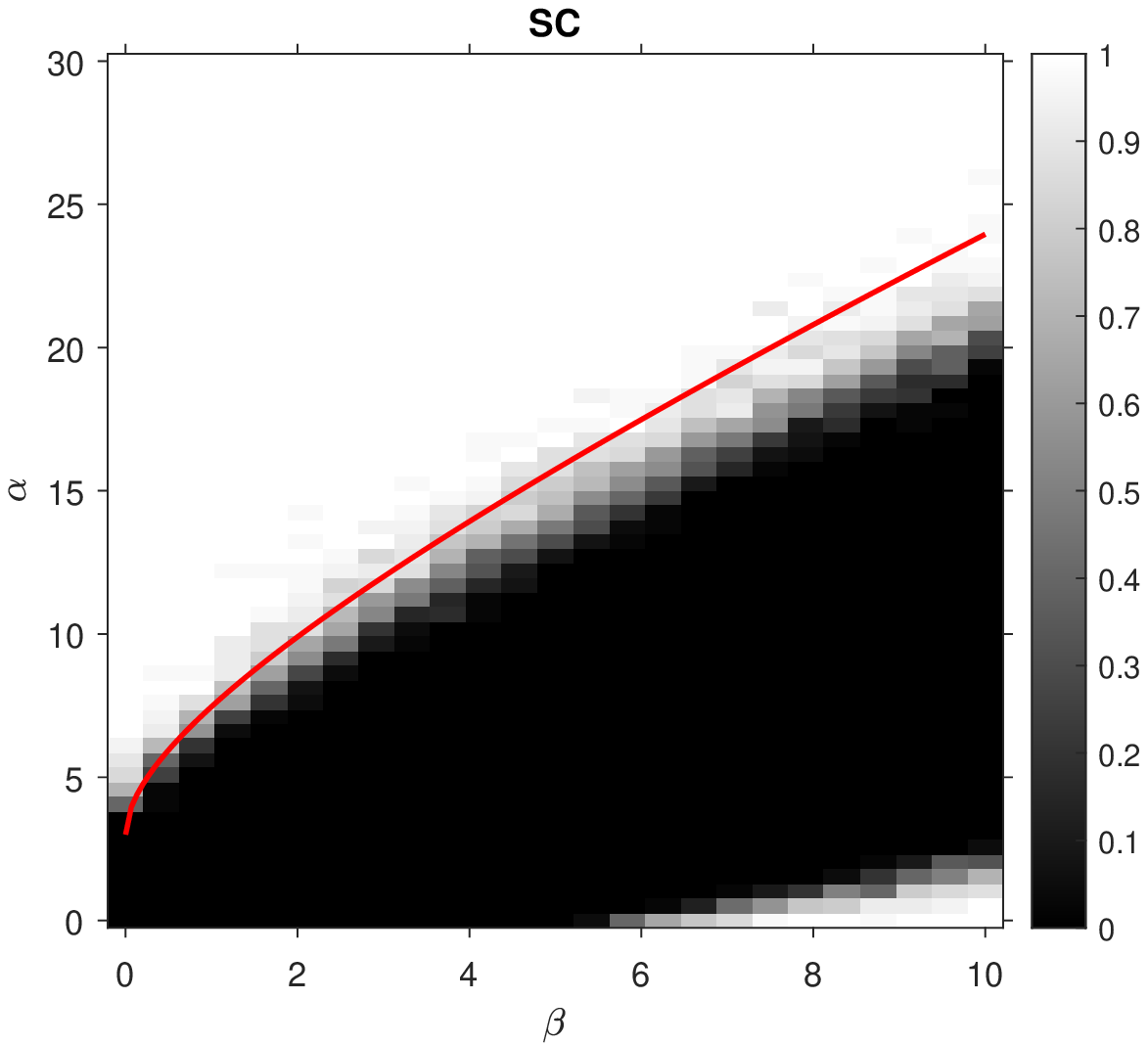}}
		\centerline{(c) SC}\medskip
	\end{minipage}
	\begin{minipage}[b]{0.245\linewidth}
		\centering
		\centerline{\includegraphics[width=\linewidth]{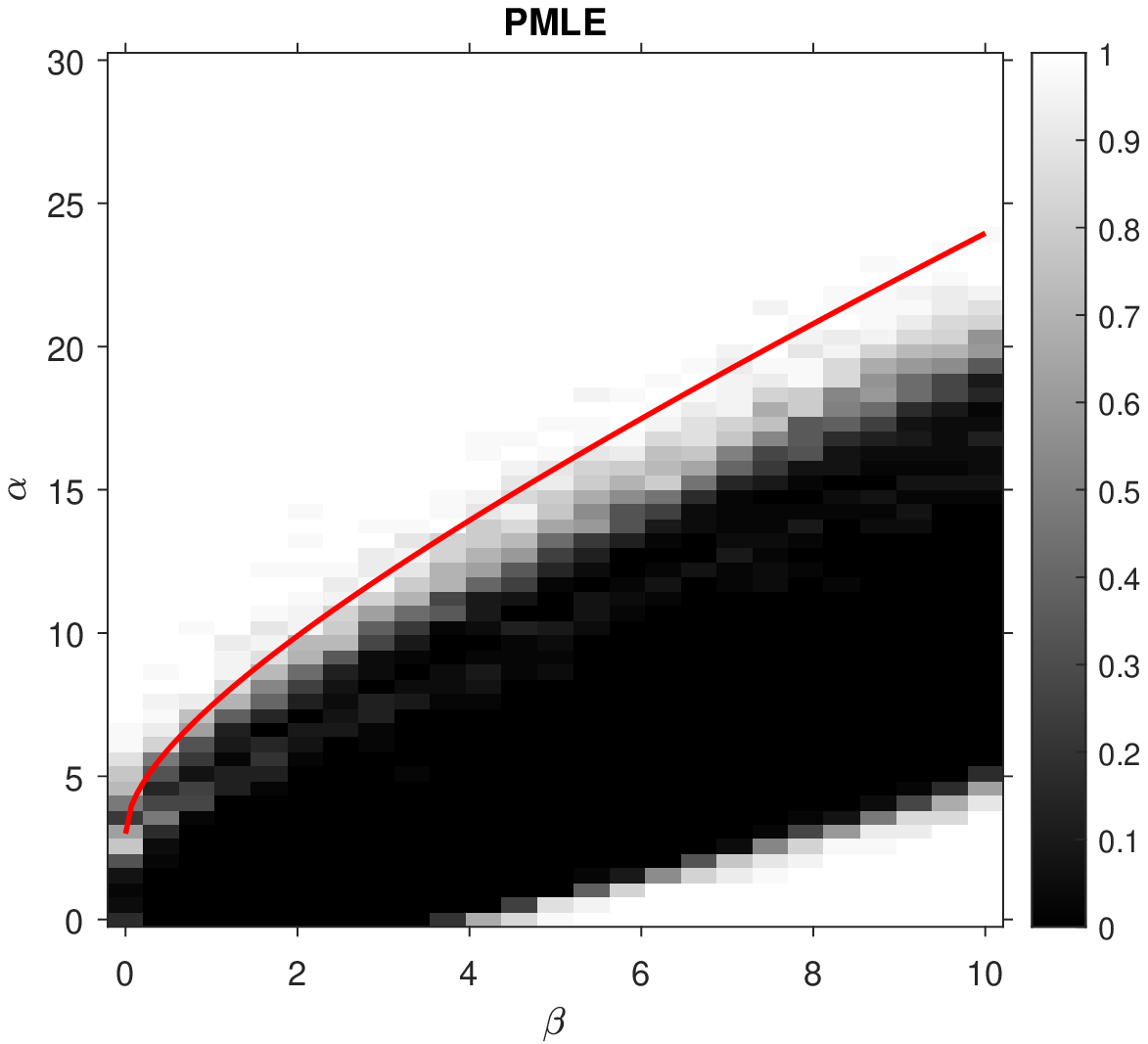}}
		\centerline{(d) PMLE}\medskip
	\end{minipage}
	\vskip -0.1in
	\caption{Phase transition in the setting of $n=300,K=3$: The $x$-axis is $\beta$, the $y$-axis is $\alpha$, and darker pixels represent lower empirical probability of success. The red curve is the information-theoretic threshold $\sqrt{\alpha}-\sqrt{\beta}=\sqrt{3}$.}
	\label{fig-1}
\end{figure*}

\begin{figure*}[!htbp]
	\begin{minipage}[b]{0.245\linewidth}
		\centering
		\centerline{\includegraphics[width=\linewidth]{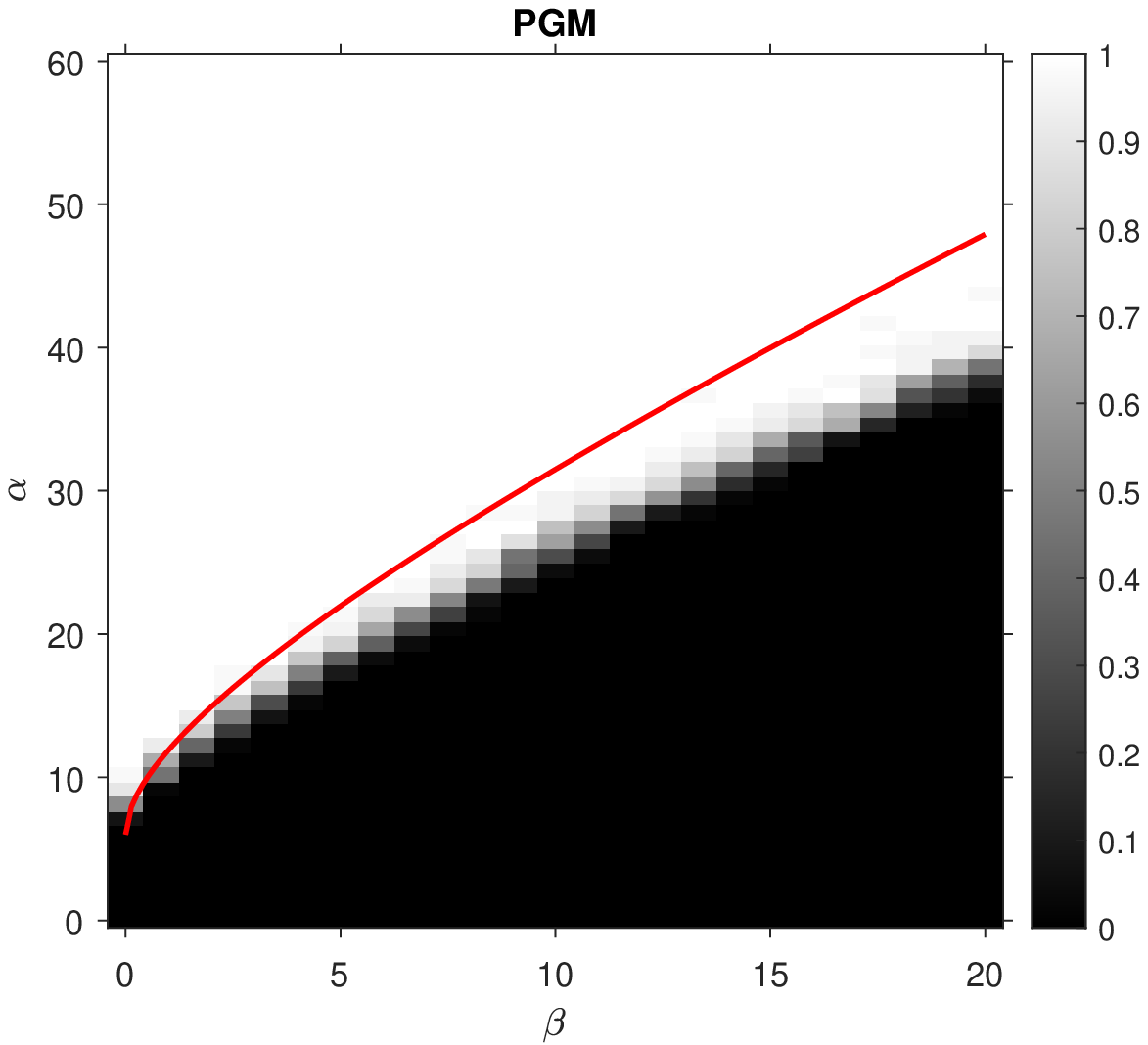}}
		\centerline{(a) PPM}\medskip
	\end{minipage}
	\begin{minipage}[b]{0.245\linewidth}
		\centering
		\centerline{\includegraphics[width=\linewidth]{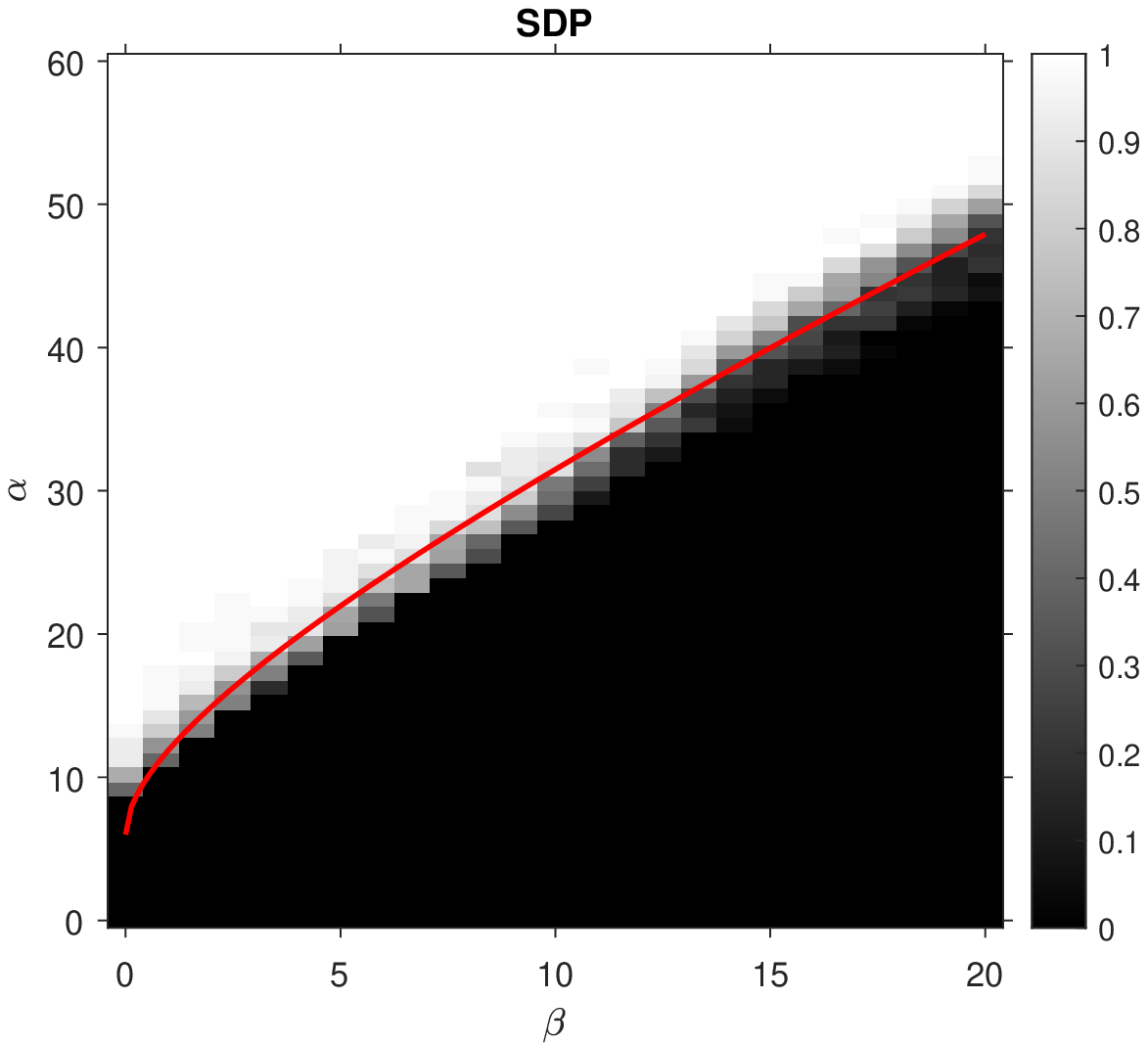}}
		\centerline{(b) SDP}\medskip
	\end{minipage}
	\begin{minipage}[b]{0.245\linewidth}
		\centering
		\centerline{\includegraphics[width=\linewidth]{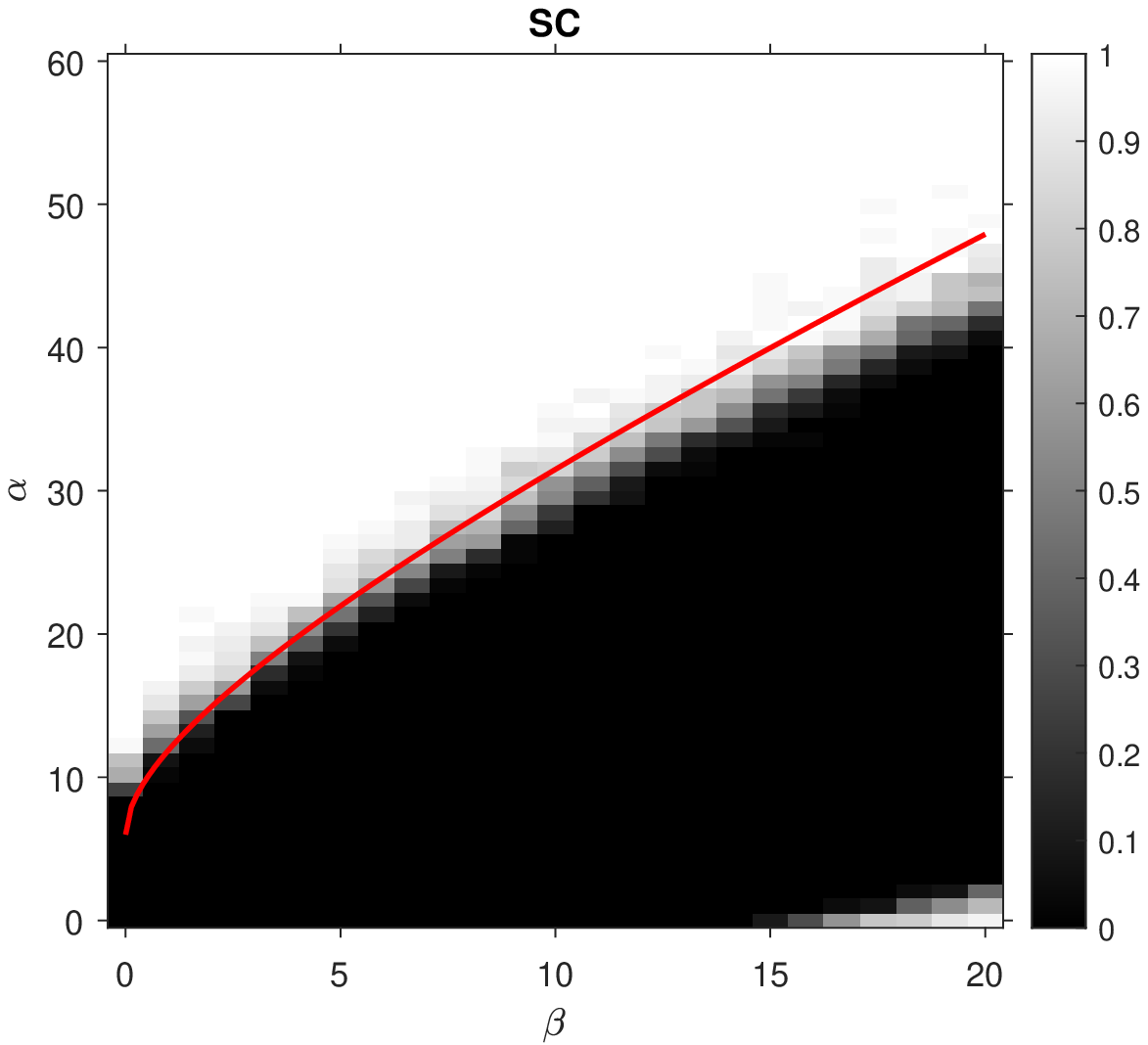}}
		\centerline{(c) SC}\medskip
	\end{minipage}
	\begin{minipage}[b]{0.245\linewidth}
		\centering
		\centerline{\includegraphics[width=\linewidth]{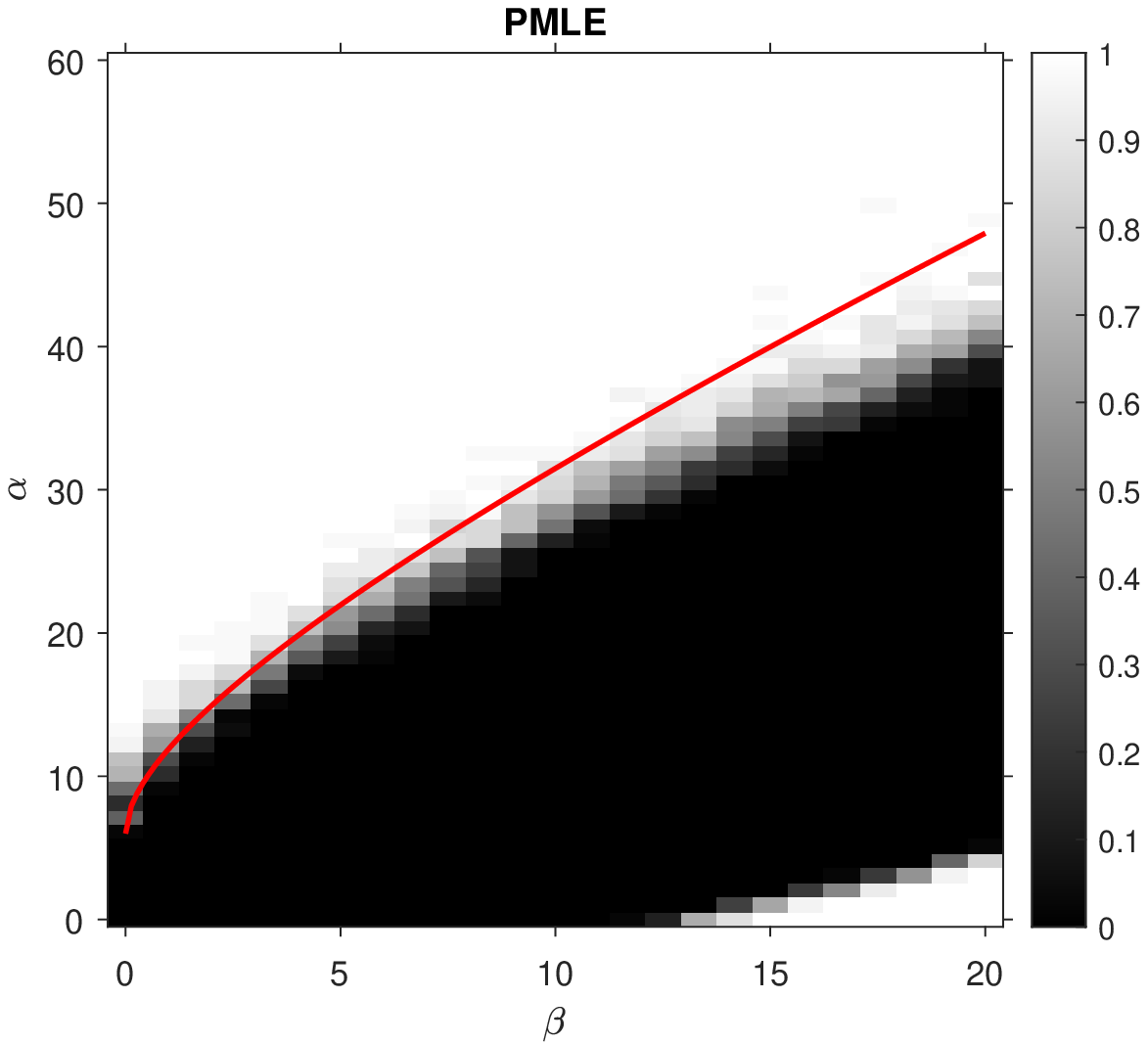}}
		\centerline{(d) PMLE}\medskip
	\end{minipage}
	\vskip -0.1in
	\caption{Phase transition in the setting of $n=600,K=6$:  The $x$-axis is $\beta$, the $y$-axis is $\alpha$, and darker pixels represent lower empirical probability of success. The red curve is the information-theoretic threshold $\sqrt{\alpha}-\sqrt{\beta}=\sqrt{6}$.}
	\label{fig-2}
\end{figure*}

\subsection{Proof of Theorem \ref{thm-1}}

Now, we are ready to derive the iteration complexity bound of Algorithm \ref{alg:PGD} equipped with the results in Section \ref{sub-sec:pgd}. We first provide a formal version of Theorem \ref{thm-1} and then sketch its proof. The full proof can be found in Section C of the appendix. Recall that $\theta$, $c_1$, and $\gamma$ are the constants in Theorem \ref{thm-1}, Lemma \ref{lem:spectral-norm-Delta}, and Lemma \ref{lem:block-gap}, respectively. To simplify the notations in the sequel, let 
\begin{align}\label{phi}
\phi = \frac{c_1\sqrt{K}}{16(\alpha-\beta)}.
\end{align}

\begin{thm}\label{thm-2}
Consider the setting of Theorem \ref{thm-1}. Suppose that 
\begin{align}\label{n}
n > \exp\left(\max\left\{ \frac{64c_1^2}{\gamma^2}, \frac{\gamma^2}{c_1^2}, \frac{4\sqrt{2}\phi}{\sqrt{\gamma}}, \frac{256c_1^4}{\gamma^4} \right\}\right). 
\end{align}
Then, the following statement holds with probability at least $1-n^{-\Omega(1)}$:
If the initial point $\bH^0\in \M_{n,K}$ satisfies 
\begin{align}\label{partial-recov}
\|\bH^0-\bHs\bQ\|_F \le  \theta\sqrt{n}
\end{align}
for some $\bQ \in \Pi_K$ and $\theta$ is defined in \eqref{theta}, Algorithm \ref{alg:PGD} outputs $\bHs\bQ$ within $\lceil 2\log\log n \rceil+\left\lceil \frac{2\log n}{\log\log n} \right\rceil+2$  projected power iterations. 
\end{thm}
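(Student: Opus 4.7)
My plan is to combine the contraction estimate (Proposition \ref{prop:contra-PGM}) with the one-step termination estimate (Lemma \ref{lem:one-step-conv}) in a two-phase analysis of the outer loop, preceded by a single ``balancing'' step that handles the initial projection $\mT(\bH^0)$. I fix $\bQ \in \Pi_K$ achieving the minimum in \eqref{partial-recov} and track $\varepsilon_k := \|\bH^k - \bHs\bQ\|_F/\sqrt{n}$. Because any two distinct permutations of $\bHs$ differ by at least $2\sqrt{n/K}$ in Frobenius norm, while the iterates will stay within $2\theta\sqrt{n} < \sqrt{n/K}$ of $\bHs\bQ$, this $\bQ$ remains the unique closest permutation throughout the run. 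All subsequent statements are asserted on the intersection of the high-probability events in Lemmas \ref{lem:spectral-norm-Delta}, \ref{lem:block-gap}, Proposition \ref{prop:contra-PGM}, and Lemma \ref{lem:one-step-conv}, which by a union bound still occurs with probability $1 - n^{-\Omega(1)}$.

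For the balancing step I would apply Lemma \ref{lem:LP-cont} with $\bC := \bHs\bQ$: its entries lie in $\{0,1\}$ with exactly one $1$ per row, so the gap condition \eqref{eq:LP-cont-cond} holds with $\delta = 1$, and $\bHs\bQ \in \mT(\bHs\bQ)$ by feasibility. The Lipschitz-like bound \eqref{rst:LP-cont} then gives $\varepsilon_1 \le 2\varepsilon_0 \le 2\theta$, which by \eqref{theta} is strictly below $\min\{1/\sqrt{K}, \gamma\sqrt{K}/(16(\alpha-\beta))\}$, placing $\bH^1$ safely in the contraction regime of Proposition \ref{prop:contra-PGM}.

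The outer-loop analysis then splits at the crossover point where the two terms of \eqref{conv-rate} balance, namely $\varepsilon \sim \phi/\sqrt{\log n}$ with $\phi$ from \eqref{phi}. While $\varepsilon_k$ is above this crossover, the first term dominates and yields the quadratic recursion $\varepsilon_{k+1} \le c\,\varepsilon_k^2$ with $c := 16(\alpha-\beta)/(\gamma\sqrt{K})$; since $c\cdot 2\theta \le 1/2$, unrolling the recursion gives $c\varepsilon_k \le (1/2)^{2^{k-1}}$, so $\lceil 2\log\log n\rceil$ iterations are (more than) enough to cross below the threshold. Once below it, the second term dominates and the rate becomes the constant $\tilde\kappa := 4c_1/(\gamma\sqrt{\log n}) < 1$, so $\varepsilon_{k+t} \le \tilde\kappa^t \cdot \phi/\sqrt{\log n}$. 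A direct computation using $\log(1/\tilde\kappa) \sim \tfrac12 \log\log n$ shows that $t = \lceil 2\log n/\log\log n\rceil$ further iterations drive $\varepsilon_k$ below $\sqrt{\gamma\log n/n}$, at which point Lemma \ref{lem:one-step-conv} applies and a single additional iteration returns $\bHs\bQ$ exactly. Adding up one balancing step, $\lceil 2\log\log n\rceil$ quadratic steps, $\lceil 2\log n/\log\log n\rceil$ linear steps, and one terminal step gives the stated total.

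The main obstacle is the bookkeeping required to ensure that the crossover $\phi/\sqrt{\log n}$ lies strictly between the termination threshold $\sqrt{\gamma\log n/n}$ and the contraction-regime cap $2\theta$, and that $\tilde\kappa < 1$; the four-term maximum inside the hypothesis \eqref{n} is tailored precisely to enforce these comparisons simultaneously, and verifying it reduces to checking each term against the corresponding inequality. A secondary subtlety is justifying that the permutation $\bQ$ fixed at the outset really remains the nearest one after every iteration, which is why the uniqueness observation based on the inter-permutation gap $2\sqrt{n/K}$ is needed up front.
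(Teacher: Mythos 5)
Your proposal is correct and follows essentially the same route as the paper: the initial projection is controlled via Lemma \ref{lem:LP-cont} with $\delta=1$ applied to $\bC=\bHs\bQ$, the iterates are then analyzed in two phases via Proposition \ref{prop:contra-PGM} (crossing over at $\varepsilon\sim\phi/\sqrt{\log n}$) with the same iteration counts $\lceil 2\log\log n\rceil$ and $\lceil 2\log n/\log\log n\rceil$, and Lemma \ref{lem:one-step-conv} supplies the final exact step, all on the intersection of the high-probability events. The only cosmetic difference is that in the first phase you exploit the $\varepsilon$-dependence of $\kappa$ to get a quadratic recursion $\varepsilon_{k+1}\le c\varepsilon_k^2$, whereas the paper simply caps $\varepsilon$ at $2\theta$ to get the constant rate $1/2$; both yield the same $N_1$.
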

\begin{proof}
Suppose that the statements in Proposition \ref{prop:contra-PGM} and Lemma \ref{lem:one-step-conv} hold, which happens with probability at least $1-n^{-\Omega(1)}$ by the union bound. 
We first show that for all $k \ge 2$, $\bH^k \in \H_{n,K}$ satisfies  $\|\bH^k-\bHs\bQ\|_F \le 2\theta\sqrt{n}$ and
\begin{align*}
\|\bH^k - \bHs\bQ\|_F \le \frac{1}{2} \|\bH^{k-1}-\bHs\bQ\|_F,
\end{align*}
and it holds for $N_1=\lceil 2\log\log n \rceil + 1$ that
\begin{align*}
\|\bH^{N_1} - \bHs\bQ\|_F \le  2\phi\sqrt{\frac{n}{\log n}}.
\end{align*}
Next, we show that for all $k \ge 1$, $\bH^{N_1+k} \in \H_{n,K}$ satisfies $\|\bH^{N_1+k}-\bHs\bQ\|_F \le 2\phi\sqrt{n/\log n}$ and
\begin{align*}
\|\bH^{N_1+k} - \bHs\bQ\|_F \le \frac{4c_1}{\gamma\sqrt{\log n}} \|\bH^{N_1+k-1}-\bHs\bQ\|_F,
\end{align*}
and it holds for $N_2=\left\lceil \frac{2\log n}{\log\log n} \right\rceil$ that 
\begin{align*}
\|\bH^{N_2+N_1} - \bHs\|_F < \sqrt{\gamma\log n}. 
\end{align*}
Once this holds, we have $\bH^{N_1+N_2+1}=\bHs$  by Lemma \ref{lem:one-step-conv}. Then, the desired result is established. 
\end{proof}

\section{Experimental Results}\label{sec:num}

\begin{figure*}[t]
\begin{center}
	\begin{minipage}[b]{0.33\linewidth}
		\centering
		\centerline{\includegraphics[width=\linewidth]{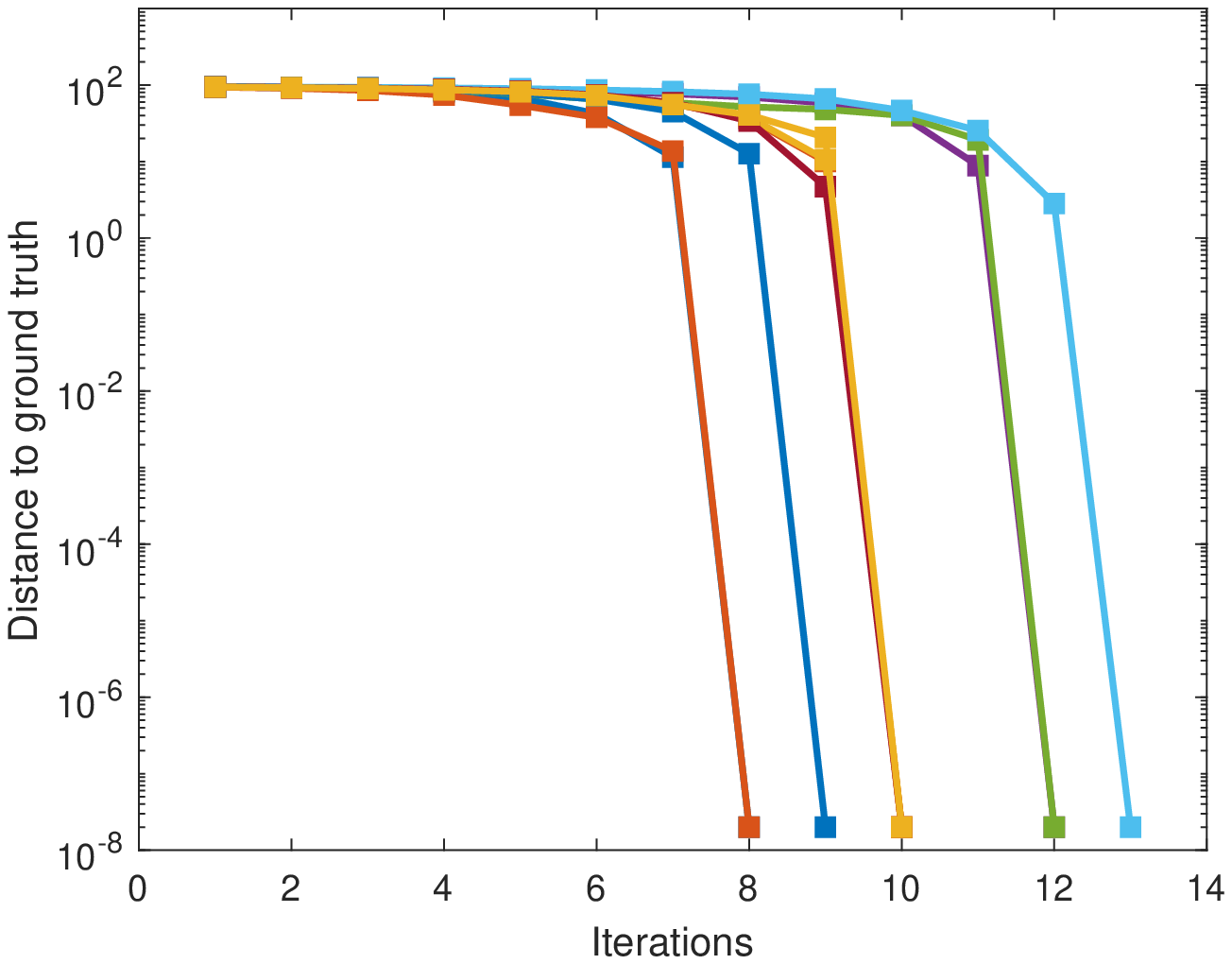}}
		\centerline{(a) {\footnotesize $(\alpha,\beta,K)=(18,4,4)$}}\medskip
	\end{minipage}
	\begin{minipage}[b]{0.33\linewidth}
		\centering
		\centerline{\includegraphics[width=\linewidth]{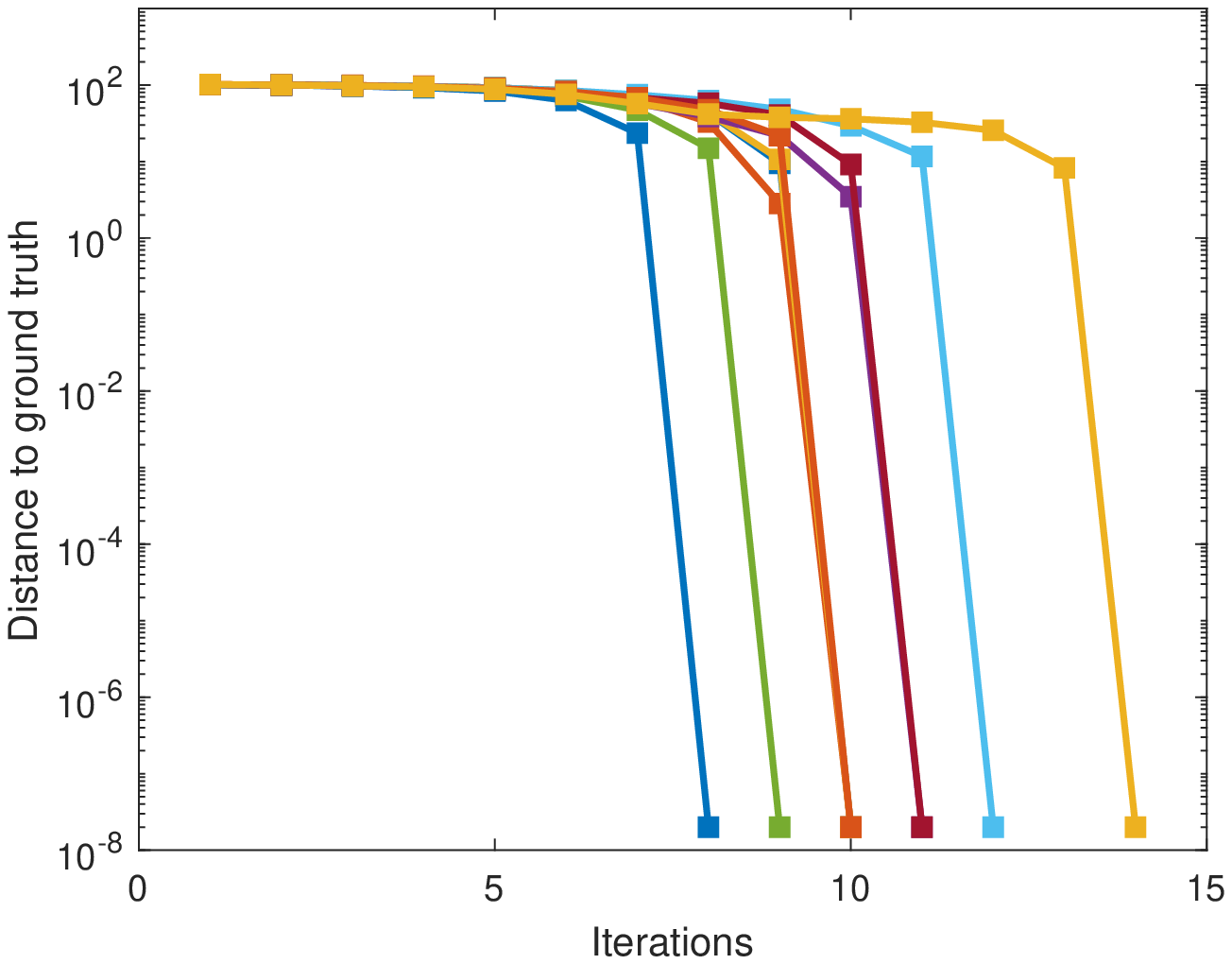}}
		\centerline{(b) {\footnotesize $(\alpha,\beta,K)=(36,8,8)$}} \medskip
	\end{minipage}
	\begin{minipage}[b]{0.33\linewidth}
		\centering
		\centerline{\includegraphics[width=\linewidth]{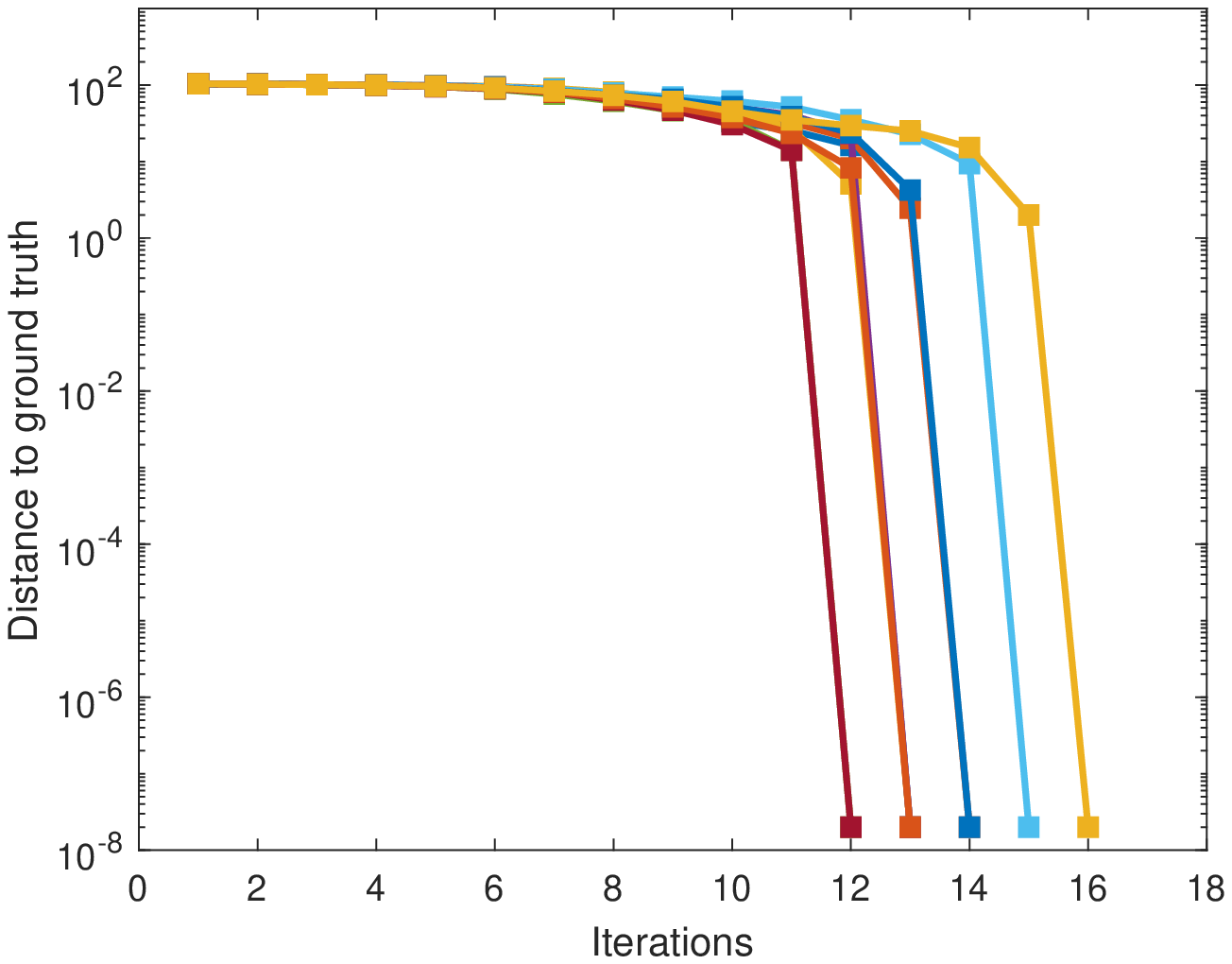}}
		\centerline{(c) {\footnotesize $(\alpha,\beta,K)=(54,12,12)$}}\medskip
	\end{minipage}
\end{center}
	\vskip -0.1in
	\caption{Convergence performance of PPM: The $x$-axis is number of iterations and the $y$-axis is the distance from an iterate to a ground truth, i.e., $\min_{\bQ \in \Pi_K}\|\bH^k-\bH^*\bQ\|_F$, where $\bH^k$ is the $k$-th iterate generated by PPM.}
	\label{fig-3}
\end{figure*}

In this section, we report the recovery performance and numerical efficiency of our proposed method for recovering communities on both synthetic and real data sets. We also compare our method with three existing methods, which are the SDP-based method in \citet{amini2018semidefinite}, the spectral clustering (SC) method in \citet{su2019strong}, and the local penalized ML estimation (PMLE) method in \citet{gao2017achieving}. In the implementation, we employ \citet[Algorithm 2]{gao2017achieving} for computing the initial point $\bH^0$ in Algorithm \ref{alg:PGD} if we do not specify the initialization method. Moreover, we use alternating direction method of multipliers (ADMM) for solving the SDP as suggested in \citet{amini2018semidefinite},\footnote{The code can be downloaded at \url{https://github.com/aaamini/SBM-SDP}.} the MATLAB function \textsf{eigs} for computing the eigenvectors that are needed in the SC method and the first stage of the PMLE method, and the MATLAB function \textsf{kmeans} for computing the partition in the SC method. For ease of reference, we denote our method simply by PPM. All of our simulations are implemented in MATLAB R2020a on a PC running Windows 10 with 16GB memory and Intel(R) Core(TM) i5-8600 3.10GHz CPU. Our code is available
at \url{https://github.com/peng8wang/ICML2021-PPM-SBM}. 

\vskip 0.25 in

\subsection{Phase Transition and Computational Time}\label{sub-sec:phase}

We first conduct the experiments to examine the phase transition property and running time of the aforementioned methods for recovering communities in graphs that are generated by the symmetric SBM in Definition \ref{SBM}. We have two sets of simulations. We choose $n=300,K=3$ (resp. $n=600,K=6$), and let the parameter $\alpha$ in \eqref{p-q} vary from $0$ to $30$ (resp. 60) with increments of $0.5$ (resp. $1$) and the parameter $\beta$ in \eqref{p-q} vary from $0$ to $10$ (resp. $20$) with increments of $0.4$ (resp. $0.8$). For every pair of $\alpha$ and $\beta$, we generate $40$ instances and calculate the ratio of exactly recovering the communities for all the tested methods. The phase transition results are reported in Figures \ref{fig-1} and \ref{fig-2}. According to these figures, we can observe that all the methods exhibit a phase transition phenomenon and the recovery performance of PPM is slightly better than the other three methods. Moreover, Figures \ref{fig-1}(a) and \ref{fig-2}(a) indicate that PPM achieves the optimal recovery threshold, which supports the result in Theorem \ref{thm-1}. Besides, we record the total CPU time consumed by each method for completing the phase transition experiments in Table \ref{table-1}. It can be observed that PPM is slightly better than PMLE and substantially faster than SC and SDP. 

\begin{table}[!htbp]
\caption{Total CPU times (in seconds) of the methods  in the phase transition experiments.}
\vskip 0.1in
\label{table-1}
\begin{center}
\begin{tabular}{ccccc}
\toprule
$\quad$ Time (s) & PPM & SDP & SC & PMLE \\
\midrule
$n=300,\ K=3$ & {\bf 401} & 25887 &  1438 & 572 \\
$n=600,\ K=6$ & {\bf 1824} & 82426 & 3669 & 2661\\
\bottomrule
\end{tabular}
\end{center}
\vskip -0.3in
\end{table}

\subsection{Convergence Performance}\label{sub-sec:conv}

We next conduct the experiments to study the convergence performance of PPM for recovering the communities in graphs generated by the symmetric SBM in Definition \ref{SBM}. In the simulations, we choose three different sets of $(\alpha,\beta, K)$ such that $\sqrt{\alpha} - \sqrt{\beta} > \sqrt{K}$ and generate graphs of dimension $n=6000$. Moreover, we generate the initial point $\bH^0$ in Algorithm \ref{alg:PGD} via $\bH^0 \in \mT(\bG)$, where each entry of $\bG \in \R^{n\times K}$ is randomly generated by the standard normal distribution. Let $\bH^k$ denote the $k$-th iterate of the PPM. In each graph, we run PPM $10$ times from different initial points and then plot the distances of the iterates to the ground truth, i.e., $\min_{\bQ \in \Pi_K}\|\bH^k-\bH^*\bQ\|_F$, against the iteration number in Figure \ref{fig-3}. It can be observed that PPM exhibits a finite termination phenomenon and converges to the ground truth within $20$ iterations even if it starts from a randomly generated initial point. This also corroborates the one-step convergence result in Lemma \ref{lem:one-step-conv} and the iteration complexity in Theorem \ref{thm-1}. 

\subsection{Recovery Efficiency and Accuracy}

Finally, we conduct the experiments to compare the recovery efficiency and accuracy of our method with SDP, SC, and PMLE on real data sets. We use the data sets \emph{polbooks}, \emph{polblogs}, and \emph{football} downloaded from the SuiteSparse Matrix Collection \citep{davis2011university}.\footnote{\url{https://sparse.tamu.edu/}} For the set \emph{football}, we remove the communities whose sizes are less than 10. To tackle the difficulty that these real networks have unbalanced communities, we modify the second constraint in \eqref{set-H} as $\bH^T\bo_K=\bm{\pi}$, where $\pi_k$ denotes the $k$-th community size for all $k\in [K]$, and then apply PPM for solving the resulting formulation as in Algorithm \ref{alg:PGD}. The stopping criteria for the tested methods are set as follows. For PPM, we terminate it when there exists some iterate $k\ge 6$ such that  $\|\bH^k-\bH^{l}\|_F \le 10^{-3}$ for some $k-5\le l \le k-1$; for ADMM, we terminate it when the norm of difference of two consecutive iterates is less than $10^{-3}$. No stopping criterion is needed for SC and PMLE since SC employs the MATLAB function \textsf{kmeans} to do the clustering and PMLE directly assigns each vertex to the corresponding community based on the initialization partition. Besides, we generate an initial point for PPM as in Section \ref{sub-sec:conv}. Then, we run each algorithm $10$ times and select the best solution (in terms of function value) as its recovery solution. Moreover, we set the maximum iteration number for PPM and ADMM as 1000. To compare the recovery efficiency and accuracy of the tested methods, we report the total CPU time for all runs and the number of misclassified vertices (MVs) of each method in Table \ref{table-2}. These results, together with those in Table \ref{table-1}, demonstrate that our proposed method is comparable to these state-of-the-art methods in terms of recovery efficiency and accuracy on both synthetic and real data sets.  

\begin{table}[!htbp]
\caption{Total CPU times (in seconds) and the number of misclassified vertices (MVs) of the methods on real data sets.}
\label{table-2}
\begin{center}
\begin{tabular}{lcccc}
\toprule
 Time (s) & PPM & SDP & SC & PMLE \\
\midrule
\emph{polbooks} & {\bf 0.28} & 10.26 & 0.30 & 19.67\\ 
\emph{polblogs} & {\bf 0.02} & 2348 & 0.41 & 1.39\\
\emph{football} & {\bf 0.21} & 0.83 & 0.42 & 0.40 \\
\midrule
num. of MVs &  PPM & SDP & SC & PMLE \\
\midrule
\emph{polbooks} & {\bf 18} & 24 & {\bf 18} & 19\\ 
\emph{polblogs} & {\bf 52} & 238 & 215 & 279\\
\emph{football} & 4 & {\bf 2} & {\bf 2} & 13\\
\bottomrule
\end{tabular}
\end{center}
\end{table}

\section{Concluding Remarks}\label{sec:con}
In this work, we proposed a projected power method for solving the ML formulation of the symmetric SBM. We showed that provided an initial point satisfying a mild partial recovery condition, this method achieves exact recovery down to the information-theoretic threshold and runs in $\mO(n\log n/\log\log n)$ time in the logarithmic degree regime. This is also demonstrated by our numerical results. Moreover, it is observed in the numerical results that the proposed method still works effectively even with a random initialization. 
 Then, one natural future direction is to study the convergence behavior of the proposed method with a random initialization. Another direction is to extend our proposed method to other variants of the basic SBM, such as degree-corrected block models (see, e.g., \citet{gao2018community, karrer2011stochastic}), labelled SBMs (see, e.g., \citet{heimlicher2012community, yun2016optimal}), and overlapping SBMs (see, e.g., \citet{airoldi2008mixed,gopalan2013efficient}).

\section*{Acknowledgements}

This work is supported in part by CUHK Research Sustainability of Major RGC Funding Schemes project 3133236. 

\bibliographystyle{icml2021}


\begin{thebibliography}{55}
\providecommand{\natexlab}[1]{#1}
\providecommand{\url}[1]{\texttt{#1}}
\expandafter\ifx\csname urlstyle\endcsname\relax
  \providecommand{\doi}[1]{doi: #1}\else
  \providecommand{\doi}{doi: \begingroup \urlstyle{rm}\Url}\fi

\bibitem[Abbe(2017)]{abbe2017community}
Abbe, E.
\newblock Community detection and stochastic block models: Recent developments.
\newblock \emph{The Journal of Machine Learning Research}, 18\penalty0
  (1):\penalty0 6446--6531, 2017.

\bibitem[Abbe \& Sandon(2015)Abbe and Sandon]{abbe2015community}
Abbe, E. and Sandon, C.
\newblock Community detection in general stochastic block models: Fundamental
  limits and efficient algorithms for recovery.
\newblock In \emph{2015 IEEE 56th Annual Symposium on Foundations of Computer
  Science}, pp.\  670--688. IEEE, 2015.

\bibitem[Abbe et~al.(2016)Abbe, Bandeira, and Hall]{abbe2016exact}
Abbe, E., Bandeira, A.~S., and Hall, G.
\newblock Exact recovery in the stochastic block model.
\newblock \emph{IEEE Transactions on Information Theory}, 62\penalty0
  (1):\penalty0 471--487, 2016.

\bibitem[Abbe et~al.(2020)Abbe, Fan, Wang, and Zhong]{abbe2020entrywise}
Abbe, E., Fan, J., Wang, K., and Zhong, Y.
\newblock Entrywise eigenvector analysis of random matrices with low expected
  rank.
\newblock \emph{Annals of Statistics}, 48\penalty0 (3):\penalty0 1452--1474,
  2020.

\bibitem[Airoldi et~al.(2008)Airoldi, Blei, Fienberg, and
  Xing]{airoldi2008mixed}
Airoldi, E.~M., Blei, D.~M., Fienberg, S.~E., and Xing, E.~P.
\newblock Mixed membership stochastic blockmodels.
\newblock \emph{Journal of Machine Learning Research}, 2008.

\bibitem[Amini et~al.(2013)Amini, Chen, Bickel, and Levina]{amini2013pseudo}
Amini, A.~A., Chen, A., Bickel, P.~J., and Levina, E.
\newblock Pseudo-likelihood methods for community detection in large sparse
  networks.
\newblock \emph{Annals of Statistics}, 41\penalty0 (4):\penalty0
  2097--2122, 2013.

\bibitem[Amini et~al.(2018)Amini, and Levina]{amini2018semidefinite}
Amini, A.~A. and Levina, E.
\newblock On semidefinite relaxations for the block model.
\newblock \emph{Annals of Statistics}, 46\penalty0 (1):\penalty0 149--179,
  2018.

\bibitem[Bandeira(2018)]{bandeira2018random}
Bandeira, A.~S.
\newblock Random Laplacian matrices and convex relaxations.
\newblock \emph{Foundations of Computational Mathematics}, 18\penalty0
  (2):\penalty0 345--379, 2018.

\bibitem[Bandeira et~al.(2016)Bandeira, Boumal, and
  Voroninski]{bandeira2016low}
Bandeira, A.~S., Boumal, N., and Voroninski, V.
\newblock On the low-rank approach for semidefinite programs arising in
  synchronization and community detection.
\newblock In \emph{Conference on Learning Theory}, pp.\  361--382, 2016.

\bibitem[Bendory et~al.(2017)Bendory, Eldar, and Boumal]{bendory2017non}
Bendory, T., Eldar, Y.~C., and Boumal, N.
\newblock Non-convex phase retrieval from STFT measurements.
\newblock \emph{IEEE Transactions on Information Theory}, 64\penalty0
  (1):\penalty0 467--484, 2017.

\bibitem[Boumal(2016)]{boumal2016nonconvex}
Boumal, N.
\newblock Nonconvex phase synchronization.
\newblock \emph{SIAM Journal on Optimization}, 26\penalty0 (4):\penalty0
  2355--2377, 2016.


\bibitem[Chen \& Cand{\`e}s(2018)Chen and Cand{\`e}s]{chen2018projected}
Chen, Y. and Cand{\`e}s, E.~J.
\newblock The projected power method: An efficient algorithm for joint
  alignment from pairwise differences.
\newblock \emph{Communications on Pure and Applied Mathematics}, 71\penalty0
  (8):\penalty0 1648--1714, 2018.

\bibitem[Chen et~al.(2019)Chen, Chi, Fan, and Ma]{chen2019gradient}
Chen, Y., Chi, Y., Fan, J., and Ma, C.
\newblock Gradient descent with random initialization: Fast global convergence
  for nonconvex phase retrieval.
\newblock \emph{Mathematical Programming}, 176\penalty0 (1-2):\penalty0 5--37,
  2019.

\bibitem[Chi et~al.(2019)Chi, Lu, and Chen]{chi2019nonconvex}
Chi, Y., Lu, Y.~M., and Chen, Y.
\newblock Nonconvex optimization meets low-rank matrix factorization: An
  overview.
\newblock \emph{IEEE Transactions on Signal Processing}, 67\penalty0
  (20):\penalty0 5239--5269, 2019.

\bibitem[Cohen-Addad et~al.(2020)Cohen-Addad, Kosowski, Mallmann-Trenn, and
  Saulpic]{cohen2020power}
Cohen-Addad, V., Kosowski, A., Mallmann-Trenn, F., and Saulpic, D.
\newblock On the power of Louvain in the stochastic block model.
\newblock \emph{Advances in Neural Information Processing Systems}, 33, 2020.

\bibitem[Davis \& Hu(2011)Davis and Hu]{davis2011university}
Davis, T.~A. and Hu, Y.
\newblock The University of Florida sparse matrix collection.
\newblock \emph{ACM Transactions on Mathematical Software (TOMS)}, 38\penalty0
  (1):\penalty0 1--25, 2011.

\bibitem[Deshpande et~al.(2014)Deshpande, Montanari, and
  Richard]{deshpande2014cone}
Deshpande, Y., Montanari, A., and Richard, E.
\newblock Cone-constrained principal component analysis.
\newblock \emph{Advances in Neural Information Processing Systems},
  27:\penalty0 2717--2725, 2014.

\bibitem[Fei \& Chen(2018)Fei and Chen]{fei2018exponential}
Fei, Y. and Chen, Y.
\newblock Exponential error rates of SDP for block models: Beyond
  Grothendieck’s inequality.
\newblock \emph{IEEE Transactions on Information Theory}, 65\penalty0
  (1):\penalty0 551--571, 2018.


\bibitem[Fei \& Chen(2020)Fei and Chen]{fei2020achieving}
Fei, Y. and Chen, Y.
\newblock Achieving the Bayes error rate in synchronization and block models by
  SDP, robustly.
\newblock \emph{IEEE Transactions on Information Theory}, 66\penalty0
  (6):\penalty0 3929--3953, 2020.

\bibitem[Gao et~al.(2017)Gao, Ma, Zhang, and Zhou]{gao2017achieving}
Gao, C., Ma, Z., Zhang, A.~Y., and Zhou, H.~H.
\newblock Achieving optimal misclassification proportion in stochastic block
  models.
\newblock \emph{Journal of Machine Learning Research}, 18\penalty0
  (1):\penalty0 1980--2024, 2017.

\bibitem[Gao et~al.(2018)Gao, Ma, Zhang, and Zhou]{gao2018community}
Gao, C., Ma, Z., Zhang, A.~Y., and Zhou, H.~H.
\newblock Community detection in degree-corrected block models.
\newblock \emph{Annals of Statistics}, 46\penalty0 (5):\penalty0 2153--2185,
  2018.

\bibitem[Girvan \& Newman(2002)Girvan and Newman]{girvan2002community}
Girvan, M. and Newman, M.~E.
\newblock Community structure in social and biological networks.
\newblock \emph{Proceedings of the National Academy of Sciences}, 99\penalty0
  (12):\penalty0 7821--7826, 2002.

\bibitem[Gopalan \& Blei(2013)Gopalan and Blei]{gopalan2013efficient}
Gopalan, P.~K. and Blei, D.~M.
\newblock Efficient discovery of overlapping communities in massive networks.
\newblock \emph{Proceedings of the National Academy of Sciences}, 110\penalty0
  (36):\penalty0 14534--14539, 2013.

\bibitem[Gu{\'e}don \& Vershynin(2016)Gu{\'e}don and
  Vershynin]{guedon2016community}
Gu{\'e}don, O. and Vershynin, R.
\newblock Community detection in sparse networks via Grothendieck’s
  inequality.
\newblock \emph{Probability Theory and Related Fields}, 165\penalty0
  (3-4):\penalty0 1025--1049, 2016.

\bibitem[Hajek et~al.(2016{\natexlab{a}})Hajek, Wu, and Xu]{hajek2016achieving}
Hajek, B., Wu, Y., and Xu, J.
\newblock Achieving exact cluster recovery threshold via semidefinite
  programming.
\newblock \emph{IEEE Transactions on Information Theory}, 62\penalty0
  (5):\penalty0 2788--2797, 2016{\natexlab{a}}.

\bibitem[Hajek et~al.(2016{\natexlab{b}})Hajek, Wu, and
  Xu]{hajek2016achieving_ex}
Hajek, B., Wu, Y., and Xu, J.
\newblock Achieving exact cluster recovery threshold via semidefinite
  programming: Extensions.
\newblock \emph{IEEE Transactions on Information Theory}, 62\penalty0
  (10):\penalty0 5918--5937, 2016{\natexlab{b}}.

\bibitem[Heimlicher et~al.(2012)Heimlicher, Lelarge, and
  Massouli{\'e}]{heimlicher2012community}
Heimlicher, S., Lelarge, M., and Massouli{\'e}, L.
\newblock Community detection in the labelled stochastic block model.
\newblock \emph{arXiv preprint arXiv:1209.2910}, 2012.

\bibitem[Heller \& Tompkins(1956)Heller and Tompkins]{heller1956extension}
Heller, I. and Tompkins, C.
\newblock An extension of a theorem of Dantzig’s.
\newblock \emph{Linear Inequalities and Related Systems}, 38:\penalty0
  247--254, 1956.

\bibitem[Hoffman \& Kruskal(2010)Hoffman and Kruskal]{hoffman2010integral}
Hoffman, A.~J. and Kruskal, J.~B.
\newblock Integral boundary points of convex polyhedra.
\newblock In \emph{50 Years of Integer Programming 1958-2008}, pp.\  49--76.
  Springer, 2010.

\bibitem[Journ{\'e}e et~al.(2010)Journ{\'e}e, Nesterov, Richt{\'a}rik, and
  Sepulchre]{journee2010generalized}
Journ{\'e}e, M., Nesterov, Y., Richt{\'a}rik, P., and Sepulchre, R.
\newblock Generalized power method for sparse principal component analysis.
\newblock \emph{Journal of Machine Learning Research}, 11\penalty0 (2), 2010.

\bibitem[Karrer \& Newman(2011)Karrer and Newman]{karrer2011stochastic}
Karrer, B. and Newman, M.~E.
\newblock Stochastic blockmodels and community structure in networks.
\newblock \emph{Physical Review E}, 83\penalty0 (1):\penalty0 016107, 2011.

\bibitem[Lei \& Rinaldo(2015)Lei and Rinaldo]{lei2015consistency}
Lei, J. and Rinaldo, A.
\newblock Consistency of spectral clustering in stochastic block models.
\newblock \emph{Annals of Statistics}, 43\penalty0 (1):\penalty0 215--237,
  2015.

\bibitem[Li et~al.(2018)Li, Chen, and Xu]{li2018convex}
Li, X., Chen, Y., and Xu, J.
\newblock Convex relaxation methods for community detection.
\newblock \emph{arXiv preprint arXiv:1810.00315}, 2018.

\bibitem[Ling(2020)]{ling2020improved}
Ling, S.
\newblock Improved performance guarantees for orthogonal group synchronization
  via generalized power method.
\newblock \emph{arXiv preprint arXiv:2012.00470}, 2020.

\bibitem[Liu et~al.(2017{\natexlab{a}})Liu, Yue, and
  Man-Cho~So]{liu2017estimation}
Liu, H., Yue, M.-C., and So, A. M.-C., A.
\newblock On the estimation performance and convergence rate of the generalized
  power method for phase synchronization.
\newblock \emph{SIAM Journal on Optimization}, 27\penalty0 (4):\penalty0
  2426--2446, 2017{\natexlab{a}}.

\bibitem[Liu et~al.(2017{\natexlab{b}})Liu, Yue, So, and Ma]{liu2017discrete}
Liu, H., Yue, M.-C., So, A. M.-C., and Ma, W.-K.
\newblock A discrete first-order method for large-scale MIMO detection with
  provable guarantees.
\newblock In \emph{2017 IEEE 18th International Workshop on Signal Processing
  Advances in Wireless Communications (SPAWC)}, pp.\  1--5. IEEE,
  2017{\natexlab{b}}.

\bibitem[Liu et~al.(2020)Liu, Yue, and So]{liu2020unified}
Liu, H., Yue, M.-C., and So, A. M.-C.
\newblock A unified approach to synchronization problems over subgroups of the
  orthogonal group.
\newblock \emph{arXiv preprint arXiv:2009.07514}, 2020.

\bibitem[Lu \& Zhou(2016)Lu and Zhou]{lu2016statistical}
Lu, Y. and Zhou, H.~H.
\newblock Statistical and computational guarantees of Lloyd's algorithm and its
  variants.
\newblock \emph{arXiv preprint arXiv:1612.02099}, 2016.


\bibitem[McSherry(2001)]{mcsherry2001spectral}
McSherry, F.
\newblock Spectral partitioning of random graphs.
\newblock In \emph{Proceedings of the 42nd IEEE Symposium on Foundations of Computer
  Science}, pp.\  529--537. IEEE, 2001.

\bibitem[Mossel et~al.(2014)Mossel, Neeman, and Sly]{mossel2014consistency}
Mossel, E., Neeman, J., and Sly, A.
\newblock Consistency thresholds for binary symmetric block models.
\newblock \emph{arXiv preprint arXiv:1407.1591}, 3\penalty0 (5), 2014.

\bibitem[Newman \& Girvan(2004)Newman and Girvan]{newman2004finding}
Newman, M.~E. and Girvan, M.
\newblock Finding and evaluating community structure in networks.
\newblock \emph{Physical Review E}, 69\penalty0 (2):\penalty0 026113, 2004.

\bibitem[Shi \& Malik(2000)Shi and Malik]{shi2000normalized}
Shi, J. and Malik, J.
\newblock Normalized cuts and image segmentation.
\newblock \emph{IEEE Transactions on Pattern Analysis and Machine
  Intelligence}, 22\penalty0 (8):\penalty0 888--905, 2000.

\bibitem[Su et~al.(2019)Su, Wang, and Zhang]{su2019strong}
Su, L., Wang, W., and Zhang, Y.
\newblock Strong consistency of spectral clustering for stochastic block
  models.
\newblock \emph{IEEE Transactions on Information Theory}, 66\penalty0
  (1):\penalty0 324--338, 2019.

\bibitem[Tokuyama \& Nakano(1995)Tokuyama and Nakano]{tokuyama1995geometric}
Tokuyama, T. and Nakano, J.
\newblock Geometric algorithms for the minimum cost assignment problem.
\newblock \emph{Random Structures \& Algorithms}, 6\penalty0 (4):\penalty0
  393--406, 1995.

\bibitem[Veinott~Jr \& Dantzig(1967)Veinott~Jr and
  Dantzig]{veinott1967integral}
Veinott~Jr, A.~F. and Dantzig, G.~B.
\newblock Integral extreme points.
\newblock Technical report, Stanford Univ CA Operations Research House, 1967.

\bibitem[Wang et~al.(2020)Wang, Zhou, and So]{wang2020nearly}
Wang, P., Zhou, Z., and So, A. M.-C.
\newblock A nearly-linear time algorithm for exact community recovery in
  stochastic block model.
\newblock In \emph{International Conference on Machine Learning}, pp.\
  10126--10135. PMLR, 2020.

\bibitem[Yun \& Proutiere(2014)Yun and Proutiere]{yun2014accurate}
Yun, S.-Y. and Proutiere, A.
\newblock Accurate community detection in the stochastic block model via
  spectral algorithms.
\newblock \emph{arXiv preprint arXiv:1412.7335}, 2014.

\bibitem[Yun \& Proutiere(2016)Yun and Proutiere]{yun2016optimal}
Yun, S.-Y. and Proutiere, A.
\newblock Optimal cluster recovery in the labeled stochastic block model.
\newblock In \emph{Advances in Neural Information Processing Systems}, pp.\
  965--973, 2016.

\bibitem[Zhang \& Zhou(2016)Zhang and Zhou]{zhang2016minimax}
Zhang, A.~Y. and Zhou, H.~H.
\newblock Minimax rates of community detection in stochastic block models.
\newblock \emph{Annals of Statistics}, 44\penalty0 (5):\penalty0
  2252--2280, 2016.

\bibitem[Zhang \& Zhou(2020)Zhang and Zhou]{zhang2020theoretical}
Zhang, A.~Y. and Zhou, H.~H.
\newblock Theoretical and computational guarantees of mean field variational
  inference for community detection.
\newblock \emph{Annals of Statistics}, 48\penalty0 (5):\penalty0 2575--2598,
  2020.

\bibitem[Zhong \& Boumal(2018)Zhong and Boumal]{zhong2018near}
Zhong, Y. and Boumal, N.
\newblock Near-optimal bounds for phase synchronization.
\newblock \emph{SIAM Journal on Optimization}, 28\penalty0 (2):\penalty0
  989--1016, 2018.

\bibitem[Zhou \& Li(2020)Zhou and Li]{zhou2020rate}
Zhou, Z. and Li, P.
\newblock Rate optimal chernoff bound and application to community detection in
  the stochastic block models.
\newblock \emph{Electronic Journal of Statistics}, 14\penalty0 (1):\penalty0
  1302--1347, 2020.

\end{thebibliography}

\newpage
\onecolumn
\vspace{0.1cm}
\begin{center}
{\Large \bf Supplementary Material}
\end{center}
\vspace{-0.3cm}
\par\noindent\rule{\textwidth}{1pt}
\setcounter{section}{0}

\renewcommand\thesection{\Alph{section}}
\renewcommand\thesubsection{\arabic{subsection}}

In the appendix, we provide proofs of some technical results presented in Sections \ref{sec:preli} and \ref{sec:pf-main}. To proceed, we introduce some further notations. Given two random variables $X$ and $Y$, we write $X\overset{d}{=}Y$ if $X$ and $Y$ are equal in distribution. We use $\ve(\bH) \in \R^{nK}$ to denote the vectorization of $\bH \in \R^{n\times K}$  formed by stacking its columns into a single column vector. We use $\be_i$ to denote a standard basis with a 1 in the $i$-th coordinate and $0$'s elsewhere. We use $\otimes$ to denote the Kronecker product. We use $\bo_n$ and $\bE_n$ to denote the $n$-dimensional all-one vector and $n\times n$ all-one matrix, respectively, and simply write $\bo$ and $\bE$ when their dimension can be inferred from the context.

\section{Proofs in Section \ref{sec:preli}} 

Before we proceed, let us introduce the definition of the minimum-cost assignment problem (MCAP) formally; see \citet[Definitions 1.1, 1.2]{tokuyama1995geometric}.
\begin{defi}\label{def:MCAP}
Let $\Gamma=(U,V,E)$ be a complete bipartite graph, where $U$ with $n$ nodes $u_1,\dots,u_n$ and $V$ with $K$ nodes $v_1,\dots,v_K$ denotes two parts of the graph and $E$ denotes the edges of the graph. For a cost matrix $\bC \in \R^{n\times K}$, each entry $c_{ik}$ is the cost associated with the edge $e(u_i,v_k) \in E$. Given a vector $\bm{\pi}=(\pi_1,\dots,\pi_K)$ such that each $\pi_k$ is a non-negative integer and $\sum_{k=1}^K\pi_k=n$, the minimum-cost $\bpi$-assignment problem is to find a subgraph of $\Gamma$ denoted by $G$ such that (i) the node set of $G$ is $U \cup V$, the degree of each node $u_i \in U$ is 1, and the degree of each node $v_k \in V$ is $\pi_k$, and (ii) the total cost $\sum_{e(u_i,v_k)\in E(G)} c_{ik}$ is minimized, where $E(G)$ denotes the edges of graph $G$.  
\end{defi}

\subsection{Proof of Proposition \ref{prop:MCAP}}

\begin{proof}
According to Definition \ref{def:MCAP}, for any $\bC \in \R^{n\times K}$, Problem \eqref{project-H} is equivalent to a minimum-cost $\bpi$-assignment problem with the cost matrix being $-\bC$ and $\bpi=m\bo_K$. According to \citet[Theorem 2.1, Proposition 3.4]{tokuyama1995geometric}, this problem can be solved in $\mO(K^2n\log n)$ time. 
\end{proof}

\subsection{Proof of Corollary \ref{coro:time}}
\begin{proof}
First, we derive the time complexity of computing the matrix multiplication of $\bA\bH$. Let $\ba \in \R^n$ denote a column of $\bA$. Since $\bA$ is generated according to the symmetric SBM with $p=\alpha\log n/n$ and $q=\beta\log n/n$, then we have
\begin{align}\label{eq1:pf-coro-time}
\|\ba\|_0 \overset{d}{=} \sum_{i=1}^m W_i + \sum_{i=1}^{n-m}Z_i,
\end{align}
where $\{W_i\}_{i=1}^m$ are i.i.d. $\mathbf{Bern}(p)$ and $\{Z_i\}_{i=1}^{n-m}$ are i.i.d. $\mathbf{Bern}(q)$, independent of $\{W_i\}_{i=1}^m$. It then follows that
\begin{align*}
\E[\|\ba\|_0] = mp+(n-m)q,\qquad \mathbf{Var}[\|\ba\|_0] = mp(1-p) + (n-m)q(1-q) \le mp + (n-m)q. 
\end{align*}
Applying the Bernstein's inequality to the bounded distribution in \eqref{eq1:pf-coro-time} yields that
\begin{align*}
\P\left( \left| \|\ba\|_0 - (mp+(n-m)q) \right| \ge 2(mp+(n-m)q) \right) &\le 2\exp\left( -\frac{4(mp+(n-m)q)^2/2}{mp + (n-m)q + 2(mp + (n-m)q)/3} \right) \\
& \le  2\exp\left( -(mp+(n-m)q) \right) \\
& = 2n^{-\frac{\alpha+(K-1)\beta}{K}}.
\end{align*}
This implies 
\begin{align*}
\P\left( \|\ba\|_0 < 3(mp+(n-m)q) \right) \ge 1 - 2n^{-\frac{\alpha+(K-1)\beta}{K}}.
\end{align*}
Upon applying the union bound to the $n$ columns of $\bA$, we conclude that it holds with probability at least $1-2n^{1-\frac{\alpha+(K-1)\beta}{K}}$ that the number of non-zero entries in $\bA$ is less than $3n(mp+(n-m)q)=\frac{3\alpha+3(K-1)\beta}{K}n\log n$. Thus, the time complexity of computing the matrix multiplication of $\bA\bH$ is $3(\alpha+(K-1)\beta)n\log n$ with probability at least $1-2n^{1-\frac{\alpha+(K-1)\beta}{K}}$. Besides, since $\sqrt{\alpha}-\sqrt{\beta} > \sqrt{K}$, then $1-\frac{\alpha+(K-1)\beta}{K} < 0$. These, together with Proposition \ref{prop:MCAP}, Theorem \ref{thm-1}, and the union bound, imply the desired result. 
\end{proof}

\section{Proofs in Section \ref{sub-sec:pgd}} 

\subsection{Proof of Lemma \ref{lem:form-H}}  

To proceed, let us formally introduce the definition of total unimodularity.
\begin{defi}\label{def:TU}
A matrix $\bA$ is totally unimodular if $\det(\bB) \in \{0,1,-1\}$ for every square non-singular submatrix $\bB$ of $\bA$.  
\end{defi}

\begin{proof}
The equivalence between (i) and (iii) is obvious. Next, suppose that (ii) holds. By letting $\bx = \ve(\bH)$, $\bH$ is an extreme point of $\mP$ if and only if $\bx$ is an extreme point of $\mP^\prime := \{\bx\in \R^{nK}:\bA\bx = \bb,\ \bx \ge \b0\}$, where 
\begin{align*}
\bA = \begin{bmatrix}
\bA_1 \\
\bA_2
\end{bmatrix},\ \bA_1 = \begin{bmatrix}
\be_1^T & \be_1^T & \cdots & \be_1^T \\
\be_2^T & \be_2^T & \cdots & \be_2^T \\
\vdots & \vdots & \ddots & \vdots \\
\be_n^T & \be_n^T & \cdots & \be_n^T
\end{bmatrix},\ \bA_2 = \begin{bmatrix}
\bo_n^T & \b0 & \dots & \b0 \\
\b0 & \bo_n^T  & \dots & \b0 \\ 
\vdots & \vdots & \ddots & \vdots \\
\b0 & \b0 & \dots & \bo_n^T
\end{bmatrix}, \text{and}\ \bb = \begin{bmatrix}
\bo_n \\ m\bo_{K}
\end{bmatrix}.
\end{align*}
Let $\ba_i^T$ denote the $i$-th row of $\bA$ for $i=1,\dots,n+K$, $\bA^\prime$ denote the submatrix of $\bA$ obtained by removing its $(n+K)$-th row, and $\bm{b}^\prime$ denote the subvector of $\bb$ obtained by removing its $(n+K)$-th element. Then, one can verify that all rows of $\bA^\prime$ are linearly independent and the rank of $\bA^\prime$ being $n+K-1$. This implies that $\ba_{n+K}^T\bx=m$ is a redundant constraint in the above linear system and it can be omitted. Consequently, we have  $\mP^\prime = \{\bx\in \R^{nK}:\bA^\prime\bx = \bb^\prime,\ \bx \ge \b0\}$. According to \citet[Theorem 3]{hoffman2010integral}, $\bA^\prime$ is totally unimodular. Then, let $\bB$ be any basis of $\bA^\prime$, which is essentially a subset of the columns of $\bA^\prime$ of rank $n+K-1$. Since $\bA^\prime$ is totally unimodular and $\bB$ is a square non-singular submatrix of $\bA^\prime$, $\bB$ is unimodular by Definition \ref{def:TU}. This implies that every basis of $\bA^\prime$ is unimodular. This, together with $\bb^\prime$ is an integer vector and the theorem in \citet{veinott1967integral}, implies that any extreme point $\bx$ of $\mP^\prime$ satisfies $\bA^\prime\bx=\bb^\prime,\ \bx \in \{0,1\}^{nK}$. Hence, $\bH \in \mH$. 

Now, suppose that (iii) holds. We show that (ii) holds. Suppose to the contrary that $\bH$ is not an extreme point of $\mP$. Then, there exist $\bH^1,\bH^2 \in \mP$ and $\bH^1 \neq \bH^2$ such that $\bH = (\bH^1+\bH^2)/2$. Besides, there exist indices $i,j$ such that $h_{ij}^1\neq h_{ij}^2$. This, together with $h_{ij}^1,h_{ij}^2 \in [0,1]$ and $\bH = (\bH^1+\bH^2)/2$, yields that $h_{ij}=(h_{ij}^1 + h_{ij}^2)/2 \in (0,1)$, which contradicts the form of $\bH$ in \eqref{form-H}. Hence, $\bH$ is an extreme point of $\mP$. 

As a result, (iii) $\Rightarrow$ (ii) $\Rightarrow$ (i) $\Rightarrow$ (iii) and thus the proof is completed. 
\end{proof}

\subsection{Proof of Proposition \ref{prop:LP}}  

\begin{proof}
According to \eqref{project-H}, we have
\begin{align*}
\mT(\bC) = \argmax\left\{ \langle \bC,\bH \rangle: \ \bH \in \mH \right\}  =  \argmax\left\{ \langle \bC,\bH \rangle: \ \bH \in \mP \right\},
\end{align*}
where the first equality is due to $\|\bH\|_F=\sqrt{n}$ for all $\bH \in \mH$ and the second equality follows from Lemma \ref{lem:form-H} and the fact that there exists a vertex (i.e., extreme point) of $\mP$ that is optimal for the LP in \eqref{LP-H}. Then, the proof is completed.  
\end{proof}

\subsection{Proof of Lemma \ref{lem:closed-form-LP}} 

\begin{proof}
Let us consider the KKT system of the LP in \eqref{LP-H}, i.e.,
\begin{align}\label{KKT:LP}
\left\{  
\begin{aligned}
&  -\bC+\bu\bo_K^T +\bo_n\bw^T = \bLam,\ \bLam \ge \b0,\\ 
&  \langle \bLam, \bH \rangle = 0,  \\
\end{aligned} 
\right.
\end{align}  
where $\bu\in \R^n$, $\bw \in \R^K$, and $\bLam \in \R^{n\times K}$ are the dual variables associated with the constraints $\bH\bo_K=\bo_n$, $\bH^T\bo_n=m\bo_K$, and $\bH \ge 0$, respectively. According to Proposition \ref{prop:LP} and Lemma \ref{lem:form-H}, the optimal solutions of the LP in \eqref{LP-H} take the form of \eqref{form-H}. For $i$ such that $i\in \mI_k$ and $k \in [K]$, since $h_{ik}=1$, then $\lambda_{ik}=0$ and 
\begin{align}\label{eq-1:pro-closed-form-LP}
c_{ik} = u_i + w_k. 
\end{align}
Besides, for $j$ such that $j \notin \mI_k$ and $k \in [K]$, since $h_{jk}=0$, we have $ \lambda_{jk} \ge 0$ and $-c_{jk} + u_j + w_k = \lambda_{jk}$. This implies
\begin{align}\label{eq-2:pro-closed-form-LP}
c_{jk} \le u_j + w_k. 
\end{align}
For any $i\in \mI_k$ and $j\in\mI_\ell$ with $k\neq \ell$, we have $c_{ik}=u_i+w_k$, $c_{jk}\le u_j+w_k$, $c_{j\ell}=u_j+w_\ell$ and $c_{i\ell} \le u_i + w_\ell$ due to \eqref{eq-1:pro-closed-form-LP}, \eqref{eq-2:pro-closed-form-LP}, and $\mI_k \cap \mI_\ell = \emptyset$. 
This implies
\begin{align}\label{eq-0:lem-LP-cont}
c_{ik} - c_{i\ell}  \ge w_k - w_\ell \ge  c_{jk} - c_{j\ell},\ \forall\ i\in \mI_k,j\in \mI_\ell, 1 \le k\neq \ell \le K.
\end{align}
Conversely, suppose that there exists $\bw\in \R^K$ such that \eqref{eq-0:lem-LP-cont} holds. By letting $u_i=c_{ik}-w_k$ for any $i\in \mI_k$ and $k\in [K]$, then we have $c_{ik}=u_i+w_k$ for any $i\in \mI_k$ and $c_{j\ell}=u_j+w_\ell$ for any $j \in \mI_\ell$, where $k\neq \ell$. This, together with \eqref{eq-0:lem-LP-cont}, implies \eqref{eq-1:pro-closed-form-LP} and \eqref{eq-2:pro-closed-form-LP}. Hence, the proof is completed.  
\end{proof}

In addition, we have another interesting result for the LP in \eqref{LP-H}, which will be used in the later proofs. 
\begin{lemma}\label{lem:LP-Q}
For a matrix $\bC \in \R^{n\times K}$, $\bH \in \mT(\bC)$ if and only if $\bH\bQ \in \mT(\bC\bQ)$ for some $\bQ \in \Pi_K$. 
\end{lemma}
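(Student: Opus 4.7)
The plan is to exploit two simple invariances of the LP reformulation \eqref{LP-H}: right-multiplication by a permutation matrix $\bQ \in \Pi_K$ is a bijection from $\mP$ to itself, and it preserves the trace inner product appearing in the LP objective. Once these two facts are in hand, the claimed equivalence between $\bH \in \mT(\bC)$ and $\bH\bQ \in \mT(\bC\bQ)$ will follow by a one-line comparison of objective values.

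First I would verify the bijection. If $\bH \in \mP$, then $(\bH\bQ)\bo_K = \bH(\bQ\bo_K) = \bH\bo_K = \bo_n$ because $\bQ\bo_K = \bo_K$ for any permutation matrix; likewise $(\bH\bQ)^T\bo_n = \bQ^T\bH^T\bo_n = \bQ^T(m\bo_K) = m\bo_K$; and $\bH\bQ \ge \b0$ since permuting columns preserves non-negativity. Hence $\bH \mapsto \bH\bQ$ maps $\mP$ into $\mP$, and the inverse map $\bH \mapsto \bH\bQ^T$ gives the reverse inclusion, establishing the bijection. Next I would record the identity $\langle \bC\bQ, \bH\bQ\rangle = \mathrm{tr}(\bQ^T\bC^T\bH\bQ) = \mathrm{tr}(\bC^T\bH) = \langle \bC, \bH\rangle$, using the cyclic property of trace together with $\bQ\bQ^T = \bI_K$.

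With these two pieces, the forward direction goes as follows. Assume $\bH \in \mT(\bC)$, so by Proposition \ref{prop:LP} we have $\langle \bC, \bH\rangle \ge \langle \bC, \bH'\rangle$ for every $\bH' \in \mP$. Given any $\bH'' \in \mP$, apply the bijection to write $\bH'' = \bH'\bQ$ for $\bH' = \bH''\bQ^T \in \mP$. Then
\begin{align*}
\langle \bC\bQ, \bH\bQ\rangle = \langle \bC, \bH\rangle \ge \langle \bC, \bH''\bQ^T\rangle = \langle \bC\bQ, \bH''\rangle,
\end{align*}
which together with $\bH\bQ \in \mP$ gives $\bH\bQ \in \mT(\bC\bQ)$. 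The reverse direction is immediate by replacing $(\bC, \bH, \bQ)$ with $(\bC\bQ, \bH\bQ, \bQ^T)$ and applying the same argument.

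I do not anticipate any genuine obstacle here; the only mild care needed is to make sure the permutation invariances are set up symmetrically so that both implications can be obtained from a single computation, and to cite Proposition \ref{prop:LP} to replace the projection characterization in \eqref{project-H} by the LP formulation where the trace-inner-product calculation is transparent.
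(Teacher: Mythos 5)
Your proposal is correct and follows essentially the same route as the paper: both arguments rest on the facts that right-multiplication by $\bQ$ is a bijection of the feasible set onto itself and that $\langle \bC\bQ, \bH\bQ\rangle = \langle \bC, \bH\rangle$, then compare objective values (the paper works directly over the discrete set $\mH$ while you pass to $\mP$ via Proposition \ref{prop:LP}, an immaterial difference). No gaps.
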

\begin{proof}
Suppose that $\bH \in \mT(\bC)$. For any $\bG \in \mH$, we have
\begin{align*}
\langle \bC\bQ, \bG \rangle = \langle \bC, \bG\bQ^T \rangle \le  \langle \bC, \bH \rangle, 
\end{align*}
where the inequality is due to $\bH \in \mT(\bC)$ and $\bG\bQ^T \in \mH$ for a $\bQ \in \Pi_K$. Moreover, $\langle \bC\bQ, \bH\bQ \rangle=\langle \bC, \bH \rangle$ and $\bH\bQ \in \mH$, and thus $\bH\bQ \in \mT(\bC\bQ)$. Suppose that $\bH\bQ \in \mT(\bC\bQ)$ for a $\bQ \in \Pi_K$.
By the same argument as above, we have $\bH \in \mT(\bC)$. Thus, the proof is completed. 
\end{proof}

\subsection{Proof of Lemma \ref{lem:LP-cont}} 

\begin{proof}
Note that \eqref{eq:LP-cont-cond} implies that for all $1\le k \neq \ell \le K$, $i \in \mI_k$, and $j\in \mI_\ell$,
\begin{align*}
c_{ik} - c_{i\ell} > 0 > c_{jk} - c_{j\ell}.
\end{align*}
This, together with Lemma \ref{lem:closed-form-LP}, yields that $\mT(\bC)$ is a singleton and $\{\bV\} = \mT(\bC)$ satisfies for all $k\in [K]$,
\begin{align}\label{eq-1:lem-LP-cont}
v_{ik} = 
\begin{cases}
1,\ \text{if}\ i \in \mI_k, \\
0,\ \text{otherwise}.
\end{cases}
\end{align}
Let $\bC^\prime \in \R^{n\times K}$ be arbitrary and $\bVp \in \mT(\bCp)$. It then follows from Lemma \ref{lem:closed-form-LP} that
\begin{align}\label{eq-2:lem-LP-cont}
v^\prime_{ik} = 
\begin{cases}
1,\ \text{if}\ i \in \mJ_k, \\
0,\ \text{otherwise},
\end{cases}
\end{align}
where $\mJ_1,\dots,\mJ_K$ satisfy ${\cup}_{k=1}^K\mJ_k = [n]$, $\mJ_k \cap \mJ_\ell = \emptyset$, and $|\mJ_k| = m$, and there exists $\bw^\prime \in \R^K$ such that
\begin{align}\label{eq-3:lem-LP-cont}
c^\prime_{ik}-c^\prime_{i\ell} \ge w_k^\prime - w_\ell^\prime \ge  c_{jk}^\prime - c_{j\ell}^\prime,\ \forall\ i\in \mJ_k,j\in \mJ_\ell, 1 \le k\neq \ell \le K.
\end{align}
For ease of exposition, let $\mI^c_k = [n]\setminus \mI_k = \cup_{\ell\neq k} \mI_\ell$ and $\mJ^c_k = [n]\setminus \mJ_k = \cup_{\ell\neq k} \mJ_\ell$. Since 
\begin{align}\label{eq-7:lem-LP-cont}
|\mI_k \cap \mJ_k| + |\mI_k \cap \mJ_k^c| = |\mI_k| =  m,\ |\mI_k \cap \mJ_k| + |\mI_k^c \cap \mJ_k| = |\mJ_k| =  m, 
\end{align}
we deduce that $|\mI_k \cap \mJ_k^c|=|\mI_k^c \cap \mJ_k|=s_k$ for some $0 \le s_k \le m$ for all $k\in [K]$. By \eqref{eq-1:lem-LP-cont} and \eqref{eq-2:lem-LP-cont}, we have for all $k\in[K]$,
\begin{align*}
v_{ik} - v^\prime_{ik} = \begin{cases}
0,\ i \in (\mI_k \cap \mJ_k) \cup\ (\mI_k^c\cap \mJ_k^c), \\
1,\ i \in \mI_k \cap \mJ_k^c, \\
-1,\ i \in \mI_k^c \cap \mJ_k.
\end{cases}
\end{align*} 
Since $|\mI_k \cap \mJ_k^c|=|\mI_k^c \cap \mJ_k|=s_k$, this yields
\begin{align}\label{eq-6:lem-LP-cont}
\|\bV-\bVp\|_F^2 = 2\sum_{k=1}^K s_k.
\end{align}
On the other hand, for any $i\in \mI_k \cap \mJ_\ell$ and $k\neq \ell$, by letting $x_i^{k\ell} = c_{ik}-c_{i\ell}+c_{i\ell}^\prime-c_{ik}^\prime-(w_\ell^\prime-w_k^\prime)$, we have
\begin{align}\label{eq-4:lem-LP-cont}
\left(c_{ik}-c_{ik}^\prime\right)^2 + \left(c_{i\ell}-c_{i\ell}^\prime\right)^2  \ge \frac{1}{2}(x_i^{k\ell}+w_\ell^\prime - w_k^\prime)^2,
\end{align}
where the inequality is due to $a^2+b^2 \ge (a+b)^2/2$ for any $a,b\in\R$. According to \eqref{eq:LP-cont-cond} and \eqref{eq-3:lem-LP-cont}, we have for any $i\in \mI_k \cap \mJ_\ell$ and $k\neq \ell$,
\begin{align}\label{eq-5:lem-LP-cont}
x_i^{k\ell} \ge \delta. 
\end{align}
Then, consider
\begin{align}\label{eq-8:lem-LP-cont}
\|\bC-\bCp\|_F^2 & = \sum_{i=1}^n\sum_{j=1}^K  (c_{ij}-c_{ij}^\prime)^2 \ge \sum_{j=1}^K \sum_{k=1}^K \sum_{i\in \mI_k \cap \mJ_k^c}(c_{ij}-c_{ij}^\prime)^2 = \sum_{j=1}^K \sum_{k=1}^K \sum_{\ell \neq k}^K  \sum_{i\in \mI_k \cap \mJ_\ell}(c_{ij}-c_{ij}^\prime)^2 \notag \\
& \ge  \sum_{k=1}^K \sum_{\ell \neq k}^K  \sum_{i\in \mI_k \cap \mJ_\ell} \left( \left(c_{ik}-c_{ik}^\prime\right)^2 + \left(c_{i\ell}-c_{i\ell}^\prime\right)^2  \right) \notag\\
& \ge \sum_{k=1}^K \sum_{\ell \neq k}^K  \sum_{i\in \mI_k \cap \mJ_\ell} \frac{1}{2} \left(x_i^{k\ell} + w_\ell^\prime - w_k^\prime \right)^2,
\end{align}
where the last inequality is due to \eqref{eq-4:lem-LP-cont} and note that $x_i^{k\ell} \ge \delta$ for any $i\in \mI_k \cap \mJ_\ell$ and $k\neq \ell$ by \eqref{eq-5:lem-LP-cont}. Then, we consider the following optimization problem:
\begin{align*}
\min_{\bm{x},\bm{w}}\ & f(\bm{x},\bw) := \sum_{k=1}^K \sum_{\ell \neq k}^K  \sum_{i\in \mI_k \cap \mJ_\ell} \frac{1}{2} \left(x_i^{k\ell} + w_\ell - w_k \right)^2 \\
\st\ & \quad x_i^{k\ell} \ge \delta,\ \forall\ i\in \mI_k \cap \mJ_\ell,\ k \neq \ell.
\end{align*} 
We claim that the optimal solution of this problem is $w_1^*=\dots=w_K^*$ and $(x_i^{k\ell})^*=\delta$ for all $i\in \mI_k \cap \mJ_\ell$ and $k\neq \ell$. Indeed, the KKT system of the above problem is 
\begin{align*}
\left\{  
\begin{aligned}
&  x_i^{k\ell}+w_\ell - w_k - \lambda_i^{k\ell} = 0,\ \forall\ i\in \mI_k \cap \mJ_\ell,\ k \neq \ell, \\ 
&   \sum_{\ell \neq j}^K \sum_{i \in \mI_j \cap \mJ_\ell} (w_j-w_\ell-x_i^{j\ell}) +  \sum_{k \neq j}^K \sum_{i \in \mI_k \cap \mJ_j} (w_j-w_k+x_i^{j\ell}) = 0,\ \forall j \in [K],  \\
& (x_i^{k\ell} - \delta) \lambda_i^{k\ell} = 0,\ \lambda_i^{k\ell} \ge 0,\ \forall\ i\in \mI_k \cap \mJ_\ell,\ k \neq \ell,
\end{aligned} 
\right.
\end{align*}  
where $ \lambda_i^{k\ell} \in \R$ is the dual variable associated with the constraint $\delta - x_i^{k\ell} \le 0$ for any $i\in \mI_k \cap \mJ_\ell$ and  $k \neq \ell$. Then, one can verify that $w_1^*=\dots=w_K^*$ and $(x_i^{k\ell})^*=\delta$ for all $i\in \mI_k \cap \mJ_\ell$ and $k\neq \ell$ satisfy this KKT system. According to \eqref{eq-8:lem-LP-cont}, we further have
\begin{align*}
\|\bC-\bCp\|_F^2 \ge f(\bm{x},\bm{w}) \ge f(\bm{x}^*,\bw^*) = \frac{1}{2}\delta^2\sum_{k=1}^Ks_k.  
\end{align*}
This, together with \eqref{eq-6:lem-LP-cont}, implies the desired result in \eqref{rst:LP-cont}. 
\end{proof} 

\subsection{Proof of Lemma \ref{lem:contra}} 

Without loss of generality, we assume that $\bHs = \bI_K \otimes \bo_m$ in Definition \ref{SBM}. Since $\bA$ is generated according to the symmetric SBM in Definition \ref{SBM}, one can verify
\begin{align}\label{EA}
\E[\bA] = \bB \otimes \bE_m = \frac{p+(K-1)q}{K}\bE_n + (p-q)\bU\bU^T  \otimes \bE_m,
\end{align}
where 
\begin{align}\label{EA-U}
\bB = \begin{bmatrix}
p & q & \dots & q\\
q & p & \dots & q\\
\vdots & \vdots & \ddots & \vdots\\
q & q & \dots & p
\end{bmatrix} \in \R^{K\times K}\ \text{and}\ \bU = \begin{bmatrix}
\frac{1}{\sqrt{2}} & \frac{1}{\sqrt{6}} & \dots & \frac{1}{\sqrt{(K-1)K}}   \\
-\frac{1}{\sqrt{2}} & \frac{1}{\sqrt{6}} & \dots & \frac{1}{\sqrt{(K-1)K}} \\
0 & -\frac{\sqrt{2}}{\sqrt{3}} & \dots & \frac{1}{\sqrt{(K-1)K}} \\
\vdots & \vdots &  \ddots & \vdots \\
0 & 0 & \dots & -\frac{\sqrt{K-1}}{\sqrt{K}}
\end{bmatrix} \in \R^{K\times (K-1)}. 
\end{align}
Moreover, one can verify
\begin{align}\label{U^TU}
\bU^T\bU = \bI_{K-1}.
\end{align}
\begin{proof}
Let us decompose $\bH$ into two parts that are orthogonal:
\begin{align*}
\bH = \bHs\bQ\bZ + \bG,\ \text{where}\ \bG^T\bHs = \b0.
\end{align*}
Then, one can verify $\bZ = (\bH^{*}\bQ)^T\bH/m$. This, together with $\bH, \bHs \in \mH$, implies $\bZ\bo_K=\bo_K$ and $z_{k\ell} \in [0,1]$ for all $k,\ell \in [K]$. Using the mixed-product property of the Kronecker product, we have
\begin{align}\label{eq-7:lem-con}
(\bU\otimes \bo_m)^T\bHs = (\bU\otimes \bo_m)^T(\bI_K \otimes \bo_m) = m\bU^T.
\end{align}
Note that $ \bG^T\bHs = \b0$ with $\bHs=\bI_K\otimes\bo_m$, and thus we have $(\bU\otimes \bo_m)^T\bG=\b0$. This, together with \eqref{eq-7:lem-con}, yields
\begin{align}\label{eq-8:lem-con}
(\bU\otimes \bo_m)^T\bH = (\bU\otimes \bo_m)^T(\bHs\bQ\bZ+\bG) = m\bU^T\bQ\bZ.
\end{align}
According to \eqref{EA}, we have
\begin{align}\label{eq-9:lem-con}
\E[\bA](\bH-\bHs\bQ) & =  \frac{p+(K-1)q}{K}\bE_n(\bH-\bHs\bQ) + (p-q)(\bU\bU^T \otimes \bE_m )(\bH-\bHs\bQ) \notag\\
& = (p-q)(\bU\bU^T \otimes \bE_m )(\bH-\bHs\bQ) \notag\\
& = (p-q)(\bU\otimes \bo_m)(\bU \otimes \bo_m)^T(\bH-\bHs\bQ) \notag \\
& = m(p-q)(\bU\otimes \bo_m)\bU^T\bQ(\bZ-\bI),
\end{align}
where the second equality is due to $\bE_n\bH=\bE_n\bHs = m\bE_{n,K}$, the third equality is because of the mixed-product property of the Kronecker product, and the last equality follows from \eqref{eq-7:lem-con} and \eqref{eq-8:lem-con}. Suppose that the following inequality holds: 
\begin{align}\label{eq-10:lem-con}
m\|\bI-\bZ\|_F \le 4\varepsilon\sqrt{n}\|\bH-\bHs\bQ\|_F.
\end{align}
This immediately implies the desired result, because
\begin{align*}
\|\bA\bH-\bA\bHs\bQ\|_F & = \left\| \E[\bA](\bH-\bHs\bQ) + \Delta(\bH-\bHs\bQ) \right\|_F\\  
& =  \left\|m(p-q)(\bU\otimes \bo_m)\bU^T\bQ(\bZ-\bI) +  \Delta(\bH-\bHs\bQ) \right\|_F \\
& \le \sqrt{m}(p-q)\cdot m\|\bI-\bZ\|_F + \|\Delta\|\|\bH-\bHs\bQ\|_F \\
& \le \left(\frac{4\varepsilon n}{\sqrt{K}}(p-q) + \|\Delta\| \right) \|\bH-\bHs\bQ\|_F,
\end{align*}
where the second equality is due to \eqref{eq-9:lem-con}, the first inequality follows from the triangle inequality, $\|\bU \otimes \bo_m\|=\sqrt{m}$, and $\|(\bU^T\bQ)^T\bU^T\bQ\|=\|\bU\bU^T\|=1$, and the second inequality is because of \eqref{eq-10:lem-con}.

The rest of the proof is devoted to proving \eqref{eq-10:lem-con}. We can verify
\begin{align}\label{eq-1:lem-con}
\|\bI-\bZ\|_F^2 = \sum_{k=1}^K (1-z_{kk})^2 + \sum_{k\neq \ell} z_{k\ell}^2. 
\end{align}
Besides, we have
\begin{align}\label{eq-2:lem-con}
\|\bI-\bZ\|_F  \le \sum_{k=1}^K |1-z_{kk}| + \sum_{k\neq \ell} |z_{k\ell}|  =  \sum_{k=1}^K (1-z_{kk}) + \sum_{k\neq \ell} z_{k\ell} = 2 \sum_{k=1}^K (1-z_{kk}),
\end{align}
where the first equality follows from $z_{k\ell} \in [0,1] $ for all $k,\ell \in [K]$ and the second equality is due to $\bZ\bo=\bo$. Note that $\bH^T\bH = m\bI$ due to $\bH \in \mH$, which is equivalent to $m\bZ^T\bZ + \bG^T\bG = m\bI$. This implies 
\begin{align}\label{eq-3:lem-con}
\|\bG\|_F^2 = mK - m\sum_{k=1}^K\sum_{\ell=1}^K z_{k\ell}^2.
\end{align}
According to $\|\bH-\bHs\bQ\|_F \le \varepsilon \sqrt{n}$, we obtain
\begin{align}\label{eq-4:lem-con}
\|\bH-\bHs\bQ\|_F^2 = m\|\bZ-\bI\|_F^2 + \|\bG\|_F^2 \le \varepsilon^2n.  
\end{align}
This, together with \eqref{eq-1:lem-con} and \eqref{eq-3:lem-con}, implies
\begin{align}\label{eq-5:lem-con}
\sum_{k=1}^Kz_{kk} \ge \left(1-\frac{\varepsilon^2}{2}\right) K.
\end{align}
Then, for any $\ell \in [K]$, we have
\begin{align}\label{eq-6:lem-con}
z_{\ell\ell} \ge  \left(1-\frac{\varepsilon^2}{2}\right) K - \sum_{k\neq \ell} z_{kk} \ge 1 - \frac{K}{2}\varepsilon^2 \ge \frac{1}{2},
\end{align}  
where the first inequality is due to \eqref{eq-5:lem-con}, the second inequality is because of $z_{kk} \le 1 $ for all $k \in [K]$, and the last inequality uses $\varepsilon \in (0,1/\sqrt{K})$.
According to \eqref{eq-3:lem-con}, we have
\begin{align*}
\frac{\|\bG\|_F^2}{m} = K - \sum_{k=1}^{K}z_{kk}^2 - \sum_{k\neq \ell}z_{k\ell}^2 \ge K - \sum_{k=1}^{K}z_{kk}^2 - \sum_{k\neq \ell}z_{k\ell} = \sum_{k=1}^K z_{kk}(1-z_{kk}) \ge \frac{1}{2}\sum_{k=1}^K (1-z_{kk}),
\end{align*}
where the first inequality is due to $z_{k\ell} \in [0,1]$ for all $k,\ell \in [K]$, the second equality is because of $\bZ\bo=\bo$, and the second inequality uses \eqref{eq-6:lem-con} and $z_{kk} \in [0,1]$ for all $k\in [K]$. This, together with \eqref{eq-2:lem-con}, yields 
\begin{align*}
\|\bI-\bZ\|_F \le \frac{4\|\bG\|_F^2}{m} \le \frac{4}{m}\|\bH-\bHs\bQ\|_F^2 \le \frac{4\varepsilon\sqrt{n}}{m}\|\bH-\bHs\bQ\|_F,
\end{align*}
where the second and third inequalities are due to \eqref{eq-4:lem-con}.

\end{proof}

\subsection{Proof of Lemma \ref{lem:block-gap}}

\begin{proof}
Since $\sqrt{\alpha}-\sqrt{\beta} > \sqrt{K}$, there exists a constant $\gamma> 0$, whose value only depends on $\alpha$, $\beta$, and $K$, such that
\begin{align}\label{eq-1:lem-block-gap}
c_2:=\frac{(\sqrt{\alpha}-\sqrt{\beta})^2}{K} - \frac{\gamma(\log\alpha-\log\beta)}{2} - 1 > 0.
\end{align}
Since $\bA$ is generated according to the SBM in Definition \ref{SBM} with $p$ and $q$ satisfying \eqref{p-q}, one can verify that for all $i \in \mI_k$ with $\ell \neq k$,
\begin{align*}
 c_{ik} - c_{i\ell}   \overset{d}{=} \sum_{i=1}^m W_i - \sum_{i=1}^m Z_i,
\end{align*}
where $m=n/K$, $\{W_i\}_{i=1}^m$ are i.i.d.~$\mathbf{Bern}(\alpha\log n/n)$, and $\{Z_i\}_{i=1}^m$ are i.i.d.~$\mathbf{Bern}(\beta\log n/n)$ and independent of $\{W_i\}_{i=1}^m$. By Lemma \ref{lem:tail-Bino}, it holds that for any $\gamma \in \R$,
\begin{align*}
\P\left(  c_{ik} - c_{i\ell} \le \gamma \log n \right) \le n^{-\frac{(\sqrt{\alpha}-\sqrt{\beta})^2}{K}+\frac{\gamma(\log\alpha-\log\beta)}{K}}.
\end{align*}
This, together with the union bound and \eqref{eq-1:lem-block-gap}, implies
\begin{align}\label{eq-3:lem-block-gap}
\P\left(  c_{ik} - c_{i\ell} \ge \gamma \log n,\ \forall\ i \in \mI_k,\ 1\le k \neq \ell \le K \right) \ge 1 - Kn^{-c_2}.
\end{align}

\end{proof}

\subsection{Proof of Proposition \ref{prop:contra-PGM} }

\begin{proof} 
Suppose that \eqref{eq:spectral-norm-Delta} and \eqref{rst-1:lem-block-gap} hold, which happens with probability at least $1-n^{-3}-Kn^{-c_2}$ due to Lemma \ref{lem:spectral-norm-Delta}, Lemma \ref{lem:block-gap}, and the union bound. Let $\mI_k=\{i \in [n]: h_{ik}^*=1\}$ for all $k \in [K]$. This, together with \eqref{rst-1:lem-block-gap} and Lemma \ref{lem:closed-form-LP}, implies $\mT(\bA\bHs) = \{\bHs\}$. Besides, due to Lemma \ref{lem:LP-Q} and $\bV \in \mT(\bA\bH)$, we have $\bV\bQ^T \in \mT(\bA\bH\bQ^T)$ for some $\bQ \in \Pi_K$. According to these, \eqref{rst-1:lem-block-gap}, and Lemma \ref{lem:LP-cont}, we have for any $\bV\bQ^T \in \mT(\bA\bH\bQ^T)$, 
\begin{align*}
\|\bV - \bHs\bQ\|_F & =  \|\bV\bQ^T - \bHs\|_F \le \frac{2\|\bA\bH\bQ^T-\bA\bHs\|_F}{\gamma\log n} \\
& = \frac{2\|\bA\bH-\bA\bHs\bQ\|_F}{\gamma\log n}  \\
& \le \frac{8\varepsilon n(p-q)/\sqrt{K} + 2\|\Delta\|}{\gamma \log n} \|\bH-\bHs\bQ\|_F \\
& \le \left(\frac{8\varepsilon (\alpha-\beta)}{\gamma\sqrt{K}} + \frac{2c_1}{\gamma\sqrt{\log n}} \right) \|\bH-\bHs\bQ\|_F \\
& \le 4 \max\left\{ \frac{4\varepsilon (\alpha-\beta)}{\gamma\sqrt{K}}, \frac{c_1}{\gamma\sqrt{\log n}} \right\} \|\bH-\bHs\bQ\|_F,
\end{align*}
where the equalities are both0 because of $\bQ \in \Pi_K$, the second inequality is due to Lemma \ref{lem:contra}, and the third inequality follows from \eqref{p-q} and \eqref{eq:spectral-norm-Delta}. This implies the desired result in \eqref{rst:prop-PGM}. Since $\varepsilon < \gamma\sqrt{K}/(16(\alpha-\beta))$ and $n > \exp(16c_1^2/\gamma^2)$, then $\kappa$ defined in \eqref{conv-rate} satisfies $\kappa \in (0,1)$. Hence, the proof is completed. 
\end{proof}

\subsection{Proof of Lemma \ref{lem:one-step-conv}}
\begin{proof}
By letting $\bH^\prime = \bH\bQ^T$, it suffices to show $\mT(\bA\bH^\prime)=\{\bHs\}$ according to Lemma \ref{lem:LP-Q}. Suppose that \eqref{rst-1:lem-block-gap} holds, which happens with probability at least $1-Kn^{-c_2}$ according to Lemma \ref{lem:block-gap}, where $c_2>0$ is specified in \eqref{eq-1:lem-block-gap}. Let $\mI_k=\{i \in [n]: h_{ik}^* = 1\}$ and $\mJ_k=\{i \in [n]: h^\prime_{ik} = 1\}$ for all $k\in[K]$. Let $\mS_k=\mI_k \cap \mJ_k^c$ and $\mS_k^\prime = \mI_k^c \cap \mJ_k$ for all $k\in [K]$. 
According to \eqref{eq-7:lem-LP-cont}, we have $s_k := |\mS_k|=|\mS_k^\prime|$. Besides, one can verify 
\begin{align}\label{eq-1:lem-one-step}
\bh^\prime_k = \bh^*_k - \be_{\mS_k} + \be_{\mS_k^\prime},\ \forall\ k \in [K],
\end{align}
where  $\bh_k^*$ (resp. $\bh^\prime_k$) is the $k$-th column of $\bHs$ (resp.  $\bHp$ ) and $\be_{\mS_k}$ (resp. $\be_{\mS_k^\prime})$ is an $n$-dimensional vector with $(\be_{\mS_k})_i=1$ if $i\in \mS_k$ (resp. $\mS_k^\prime$) and $0$ otherwise. This, together with $\|\bHp-\bHs\|_F = \|\bH-\bHs\bQ\|_F < \sqrt{\gamma\log n}$, yields that for all $ k \in [K]$, 
\begin{align*}
2s_k = |\mS_k|+|\mS_k^\prime| = \|\be_{\mS_k} - \be_{\mS_k^\prime} \|^2 = \|\bh^\prime_k  - \bh^*_k\|^2 < \gamma\log n. 
\end{align*}
This implies 
\begin{align}\label{eq-2:lem-one-step}
|\mS_k|=|\mS_k^\prime| < \frac{\gamma}{2}\log n,\ \forall\ k\in [K].
\end{align} 
By letting $\bC^* = \bA\bHs$, $\bC = \bA\bHp$, and $\ba_i^T$ denote the $i$-th row of $\bA$, we have that for all $i \in \mI_k$ and $k\in [K]$,
\begin{align}\label{eq-3:lem-one-step}
c_{ik} = c_{ik}^* + \ba_i^T(\bh^\prime_k-\bh_k^*) =  c_{ik}^* + \ba_i^T(\be_{\mS_k^\prime}-\be_{\mS_k}) = c_{ik}^* + \sum_{j\in \mS_k^\prime}a_{ij} - \sum_{j\in \mS_k} a_{ij},
\end{align}
where the second equality is due to \eqref{eq-1:lem-one-step}. Now, for all  $i \in \mI_k$ with $1 \le k \neq \ell \le K$,  we have
\begin{align*}
c_{ik} - c_{i\ell} & = c_{ik}^* - c_{i\ell}^* +  \sum_{j\in \mS_k^\prime}a_{ij} - \sum_{j\in \mS_k} a_{ij} - \sum_{j\in \mS_\ell^\prime}a_{ij} + \sum_{j\in \mS_\ell} a_{ij} \\
& \ge \gamma\log n - |\mS_k| - |\mS_\ell^\prime|  \\
& >  0,
\end{align*}
where the equality is due to \eqref{eq-3:lem-one-step}, the first inequality uses \eqref{rst-1:lem-block-gap} and $a_{ij} \in \{0,1\}$, and the second inequality follows from \eqref{eq-2:lem-one-step}. This implies that for all $1\le k \neq \ell \le K$, $i \in \mI_k$, and $j\in \mI_\ell$,
\begin{align*}
c_{ik} - c_{i\ell} > 0 > c_{jk} - c_{j\ell}.
\end{align*}
According to Lemma \ref{lem:closed-form-LP}, we have that $\mT(\bA\bH^\prime)$ is a singleton and $ \mT(\bA\bH^\prime)=\{\bHs\}$. 
\end{proof}

\section{Proof of Theorem \ref{thm-2}}  

To simplify the notations in the proof, let
\begin{align}\label{r-phi}
r = \min\left\{\frac{1}{\sqrt{K}},\ \frac{\gamma\sqrt{K}}{16(\alpha-\beta)}\right\}\quad \text{and}\quad  \phi = \frac{c_1\sqrt{K}}{16(\alpha-\beta)}.
\end{align}

\begin{proof}
Suppose that the statements in Proposition \ref{prop:contra-PGM} and Lemma \ref{lem:one-step-conv} hold, which happens with probability at least $1-n^{-\Omega(1)}$ by the union bound. Let $\mI_k=\left\{i\in [n]:h^*_{ik}=1\right\}$ for all $k\in [K]$. Then, one can verify that $h_{ik}^*-h_{i\ell}^* = 1$ for all $i\in \mI_k$ and $k\neq \ell$, and thus $\bH^*\in \mT(\bH^*)$ by Lemma \ref{lem:closed-form-LP}. 
 This, together with  Lemma \ref{lem:LP-Q} and Lemma \ref{lem:LP-cont} with $\bH^1\in \mT(\bH^0)$ in Algorithm \ref{alg:PGD}, yields that $\bH^1 \in \H_{n,K}$ satisfies
\begin{align}\label{eq1:pf-thm-1}
\|\bH^1 - \bHs\bQ\|_F = \|\bH^1\bQ^T - \bHs\|_F  \le 2 \|\bH^0\bQ^T - \bHs\|_F = 2 \|\bH^0- \bHs\bQ\|_F. 
\end{align}
Let us divide our proof into two parts. We first show that for all $k \ge 2$, $\bH^k \in \H_{n,K}$ satisfies
\begin{align}\label{step1:pf-thm-1}
\|\bH^k - \bHs\bQ\|_F \le \frac{1}{2} \|\bH^{k-1}-\bHs\bQ\|_F\ \text{and}\ \|\bH^k-\bHs\bQ\|_F \le 2\theta\sqrt{n},
\end{align}
and compute the iteration number $N_1$ such that
\begin{align}\label{N1:pf-thm-1}
\|\bH^{N_1} - \bHs\bQ\|_F \le  2\phi\sqrt{\frac{n}{\log n}}.
\end{align}
Suppose that  $\bH^0\in \M_{n,K}$ satisfies \eqref{partial-recov}. According to \eqref{partial-recov} and \eqref{eq1:pf-thm-1}, we have
\begin{align}\label{eq3:pf-thm-1}
\bH^1 \in \H_{n,K}\ \text{and}\ \|\bH^1 - \bHs\bQ\|_F \le 2\theta\sqrt{n}.
\end{align}
This, together with $2\theta=r/2$, $\bH^2\in \mT(\bA\bH^1)$, and Proposition \ref{prop:contra-PGM}, yields 
\begin{align*}
\qquad \|\bH^2-\bHs\bQ\|_F 
\le 4\max\left\{ \frac{1}{8}, \frac{c_1}{\gamma\sqrt{\log n}} \right\}\|\bH^1-\bHs\bQ\|_F  =  \frac{1}{2}\|\bH^1-\bHs\bQ\|_F \le 2\theta\sqrt{n},
\end{align*}
where the first inequality follows from Proposition \ref{prop:contra-PGM} and \eqref{theta} and the equality is due to $n\ge \exp\left(64c_1^2/\gamma^2 \right)$. Thus, \eqref{step1:pf-thm-1} holds for $k=2$. By a simple inductive argument, we can show that \eqref{step1:pf-thm-1} holds for $k\ge 3$. As a result, \eqref{step1:pf-thm-1} can be established by a mathematical induction method. Let $N_1=\lceil 2\log\log n \rceil + 1$. It then follows from \eqref{step1:pf-thm-1} that
\begin{align*}
& \|\bH^{N_1} - \bHs\bQ\|_F \le \left(\frac{1}{2}\right)^{\lceil 2\log\log n \rceil}\|\bH^1 - \bHs\bQ\|_F  \le \left(\frac{1}{2}\right)^{ 2\log\log n }2\theta\sqrt{n} \le \left(\frac{1}{2}\right)^{\log\log n + 2\log\left(\frac{\theta}{\phi}\right)}2\theta\sqrt{n} \\
& \qquad \le \left(\frac{1}{2}\right)^{\frac{\log\log n + 2\log\left(\frac{\theta}{\phi}\right)}{2\log 2}}2\theta\sqrt{n} = 2\phi\sqrt{\frac{n}{\log n}},
\end{align*}
where the second inequality is due to \eqref{eq3:pf-thm-1}, the third inequality follows from $n \ge \exp\left(\gamma^2/c_1^2\right) \ge \exp\left(\theta^2/\phi^2\right)$, and the last inequality is due to $2\log 2 \ge 1$. Thus, \eqref{N1:pf-thm-1} holds for $N_1=\lceil 2\log\log n \rceil + 1$.

Next, we show that for all $k \ge 1$, $\bH^{N_1+k} \in \H_{n,K}$ satisfies $\|\bH^{N_1+k}-\bHs\bQ\|_F \le 2\phi\sqrt{n/\log n}$ and
\begin{align}\label{step2:pf-thm-1}
\|\bH^{N_1+k} - \bHs\bQ\|_F \le \frac{4c_1}{\gamma\sqrt{\log n}} \|\bH^{N_1+k-1}-\bHs\bQ\|_F,
\end{align}
and compute the iteration number $N_2$ such that 
\begin{align}\label{N2:pf-thm-1}
\|\bH^{N_2+N_1} - \bHs\bQ\|_F < \sqrt{\gamma\log n}. 
\end{align}
Since $n\ge \exp\left(\phi^2/\theta^2\right)$ and $2\theta=r/2$, it holds that $2\phi/\sqrt{\log n} \le 4\phi/\sqrt{\log n} \le r$. This, together with $\bH^{N_1}\in\H_{n,K}$, $\bH^{N_1+1}\in\mT(\bA\bH^{N_1})$, Proposition \ref{prop:contra-PGM}, and \eqref{phi}, yields
\begin{align*}
\|\bH^{N_1+1}-\bHs\bQ\|_F  \le 4\max\left\{ \frac{8\phi(\alpha-\beta)}{\gamma\sqrt{K\log n}},\frac{c_1}{\gamma\sqrt{\log n}} \right\}\|\bH^{N_1}-\bHs\bQ\|_F \le \frac{4c_1}{\gamma\sqrt{\log n}}\|\bH^{N_1}-\bHs\bQ\|_F.
\end{align*}
Then, \eqref{step2:pf-thm-1} holds for $k=1$. We can show that \eqref{step2:pf-thm-1} holds for $k\ge 2$ by a simple inductive argument. Thus, \eqref{step2:pf-thm-1} can be established by a mathematical induction method. Then, let $N_2=\left\lceil \frac{2\log n}{\log\log n} \right\rceil$. According to $n \ge \exp\left( 256c_1^4/\gamma^4\right)$ and $n > \exp\left(2\phi/\sqrt{\gamma}\right)$ in \eqref{n}, we have $\log\log n\ge 4\log(4c_1/\gamma)$ and $2\phi/\sqrt{\log n} < \sqrt{\gamma\log n}$. This, together with \eqref{step2:pf-thm-1}, yields
\begin{align*}
 \|\bH^{N_1+N_2} - \bHs\bQ\|_F  & \le   \left(\frac{4c_1}{\gamma\sqrt{\log n}} \right)^{\left\lceil \frac{2\log n}{\log\log n} \right\rceil} \|\bH^{N_1}-\bHs\bQ\|_F  \le 2\phi\sqrt{\frac{n}{\log n}} \left( \frac{4c_1}{\gamma\sqrt{\log n}} \right)^{\frac{2\log n}{\log\log n}} \\
& \le 2\phi\sqrt{\frac{n}{\log n}} \left( \frac{4c_1}{\gamma\sqrt{\log n}} \right)^{\frac{\log n}{\log\log n+2\log(\gamma/(4c_1))}}  = \frac{2\phi}{\sqrt{\log n}} < \sqrt{\gamma\log n}.
\end{align*}
Thus, \eqref{step2:pf-thm-1} holds for $N_2=\left\lceil \frac{2\log n}{\log\log n} \right\rceil$.

Once \eqref{N2:pf-thm-1} holds, we have $\bH^{N_1+N_2+1}=\bHs$  by Lemma \ref{lem:one-step-conv}. Then, the desired result is established. 
\end{proof}

\end{document}